\documentclass[oneside]{amsart}

\usepackage[letterpaper,body={14.6cm,22.5cm}, mag=1000]{geometry}
\usepackage{amssymb}
\usepackage{amsthm}
\usepackage{amscd}
\usepackage{lineno}

\numberwithin{equation}{section}
\theoremstyle{plain}
\newtheorem{thm}{Theorem}[section]
\newtheorem{cor}[thm]{Corollary}
\newtheorem{lemma}[thm]{Lemma}

\newtheorem{prop}[thm]{Proposition}

\newtheorem*{thma}{Theorem A}
\newtheorem*{thmb}{Theorem B}
\newtheorem*{thmc}{Theorem C}
\newtheorem*{thmd}{Theorem D}

\theoremstyle{definition}

\newtheorem{remark}[thm]{Remark}

\newcommand{\dlabel}[1]{\ifmmode \text{\ttfamily \upshape [#1] } \else
{\ttfamily \upshape [#1] }\fi \label{#1} }
\newcommand{\A}{\operatorname{A} }
\newcommand{\B}{\operatorname{B} }
\newcommand{\C}{\operatorname{C} }

\newcommand{\Z}{\operatorname{Z} }

\newcommand{\gen}[1]{\left < #1 \right >}
\newcommand{\Aut}{\operatorname{Aut} }
\newcommand{\Cb}{\operatorname{Cb} }

\newcommand{\Hom}{\operatorname{Hom} }
\newcommand{\Inn}{\operatorname{Inn} }

\newcommand{\Autcent}{\operatorname{Autcent} }

\newcommand{\onto}{\twoheadrightarrow}

\newcommand{\annd}{\quad \text{ and } \quad}

\begin{document}
\baselineskip 15pt

\title{Class-preserving automorphisms of finite $p$-groups II }

\author{Manoj K.~Yadav}

\address{School of Mathematics, Harish-Chandra Research Institute \\
Chhatnag Road, Jhunsi, Allahabad - 211 019, INDIA}

\email{myadav@hri.res.in}

\subjclass[2010]{Primary 20D15, 20D45}
\keywords{Camina-type group, class-preserving automorphism, $p$-group}

\begin{abstract}
 Let $G$ be a finite group minimally generated by $d(G)$ elements and  $\Aut_c(G)$ denote the group of all (conjugacy) 
class-preserving automorphisms of $G$. Continuing our work [Class preserving automorphisms of finite $p$-groups, 
J. London Math. Soc.  \textbf{75(3)} (2007), 755-772],  we study finite $p$-groups $G$ such that $|\Aut_c(G)| = |\gamma_2(G)|^{d(G)}$, where $\gamma_2(G)$ denotes the commutator subgroup of $G$.  If $G$ is such a $p$-group of class $2$, then we show that $d(G)$ is even, $2d(\gamma_2(G)) \le d(G)$ and  $G/\Z(G)$ is  homocyclic. When the nilpotency class of $G$ is larger than $2$, we obtain the following (surprising) results: (i) $d(G) = 2$. (ii)   If  $|\gamma_2(G)/\gamma_3(G)| > 2$,  then  $|\Aut_c(G)| = |\gamma_2(G)|^{d(G)}$ if and only if $G$ is a $2$-generator group with  cyclic commutator subgroup, where $\gamma_3(G)$ denotes the third term in the lower central series of $G$. (iii) If  $|\gamma_2(G)/\gamma_3(G)| = 2$, then $|\Aut_c(G)| = |\gamma_2(G)|^{d(G)}$ if and only if $G$ is a $2$-generator $2$-group of nilpotency class $3$ with elementary abelian commutator subgroup of order at most $8$.
As an application, we  classify   finite nilpotent groups $G$ such that the central quotient $G/\Z(G)$ of $G$ by it's center $\Z(G)$ is of the largest possible order. 
For proving these results, we introduce a generalization of  Camina  groups and obtain some interesting results.  We use Lie theoretic techniques and computer algebra system `Magma' as tools.
\end{abstract}
\maketitle

\section{Introduction}

This paper is devoted to the study of finite $p$-groups admitting maximum number of class-preserving automorphisms. An automorphism $\alpha$ of a group $G$ is called \emph{class-preserving} if 
$\alpha(x) \in x^G$ for all $x \in G$,  where $x^G$ denotes the conjugacy class of $x$ in $G$. The set of all class-preserving
automorphisms of $G$,  denoted by $\Aut_{c}(G)$, forms a normal subgroup of the group of all automorphisms of $G$, and contains $\Inn(G)$, the group of all inner automorphisms of $G$.  

For a finite group $G$ minimally generated by $d$ elements $x_1, x_2, \ldots, x_d$, it follows that
\begin{equation}
\label{bineq} |\Aut_{c}(G)| \leq \prod_{i=1}^{d} |x_{i}^G|,
\end{equation}
since there are no more choices for the generators to go under any class-preserving automorphism. Notice that \eqref{bineq} holds true for any minimal generating set $\{x_{1}, x_2, \ldots, x_{d}\}$ for $G$. Since $|x^G| = |[x, G]| \le |\gamma_2(G)|$, from  \eqref{bineq} we get
\begin{equation}
\label{lineq}  |\Aut_{c}(G)| \leq  |\gamma_2(G)|^d,
\end{equation}
where $[x, G]$ denotes the set $\{[x, g] \mid g \in G\}$ and $\gamma_2(G)$ denotes the commutator subgroup of $G$. We say that a finite group $G$, minimally generated by $d$ elements, satisfies \emph{Hypothesis A} if equality holds for it in \eqref{lineq}. Most obvious examples of groups satisfying Hypothesis A are abelian groups, and little less obvious ones being finite extraspecial $p$-groups. Notice that none of these classes of groups admit any class preserving outer automorphism.

An interesting class of  groups $G$ satisfying Hypothesis A was constructed by  Burnside \cite{wB13} (in 1913)  while answering his own question \cite[page 463]{wB55} about the existence of a finite group admitting a non-inner class-preserving automorphism. This group is of order  $p^6$ and is isomorphic to a group consisting of all $3 \times 3$ unitriangular matrices over the field $\mathbb F_{p^2}$ of $p^2$ elements, where $p$ is an odd prime. For this group $G$,  $\Inn(G) < \Aut_{c}(G)$ and $\Aut_{c}(G)$ is an elementary abelian $p$-group of order $p^8$. Notice that $G$ is minimally generated by $4$ elements and $|\gamma_2(G)| = p^2$. Thus it follows that $|\Aut_c(G)| = p^8 = |\gamma_2(G)|^4$, and therefore  equality holds in \eqref{lineq} for  $G$.

Interestingly, a generalization of the group constructed by Burnside also  enjoys this property.  
It follows from \cite[Theorem B]{BVY} that the group $G$ consisting  of all $3 \times 3$ unitriangular  matrices over a finite field $\mathbb F_{p^m}$ of $p^m$ elements satisfies Hypothesis A, where $m \ge 2$ and $p$ is an odd prime.   
A wider class of groups $G$ satisfying Hypothesis A is the class of finite Camina $p$-groups of nilpotency class $2$. A non-abelian finite group $G$ is called  \emph{Camina group} if $x^G = x\gamma_2(G)$ (or equivalently $[x, G] = \gamma_2(G)$) for all $x \in G - \gamma_2(G)$ (concept initiated by Alan Camina \cite{aC78}). So, coming back to our discussion, let $G$ be a finite Camina $p$-group of nilpotency class $2$. Then  it follows  from \cite[Theorem 5.2]{mY07} that equality holds in \eqref{lineq}.  Examples of groups of larger nilpotency class and  satisfying Hypothesis A are given below. A (bit wild) natural problem which arises here \cite[Problem 6.7]{mY11} is the following: 
\vspace{.1in}

\noindent{\bf Problem.}  Classify all finite $p$-groups $G$ satisfying Hypothesis A.

\vspace{.1in}

Since equality holds in  \eqref{lineq} for all finite abelian groups, we only consider non-abelian ones. In \cite{mY07} we considered a special case of this problem and classified, upto isoclinism (see Section 3 for the definition), all finite $p$-groups $G$ such that
\begin{equation*}
\label{gbineq} |\Aut_{c}(G)| =
   \begin{cases}
     p^{\frac{(n^{2}-4)}{4}},  &\text{if $n$ is even;}\\
     p^{\frac{(n^{2}-1)}{4}}, &\text{if $n$ is odd,}
   \end{cases}
\end{equation*}
where $n = log_p|G|$.

 In this paper, continuing our work  of \cite{mY07}, we make  a substantial progress on this problem.  Other motivation of this study is to provide a classification of finite nilpotent groups $G$ having  central quotient $G/\Z(G)$ of maximum possible order. It is done in Section 11, where some historical remarks are also made on the relationship between the orders of $G/\Z(G)$ and $\gamma_2(G)$ for an arbitrary group $G$.

 Let $G$ be a finite $p$-group minimally generated by $d$ elements which satisfies Hypothesis A. Let $\{x_1, x_2, \ldots, x_d\}$ be any minimal generating set for $G$. Then it is easy to see the following two statements: (i)   $[x_i, G] = \gamma_2(G)$ for all $x_i$, and therefore equality holds in \eqref{bineq}; (ii) For any element $x \in G - \Phi(G)$, $[x, G] = \gamma_2(G)$, where $\Phi(G)$ denotes the Frattini subgroup of $G$.  By statement (i) it follows that  equality holds in \eqref{bineq} for any minimal generating set $\{x_1, x_2, \ldots, x_d\}$ of a finite $p$-group $G$ satisfying Hypothesis A. Interestingly,  the converse of this statement also holds true. We record it in the following theorem, which we prove in Section 4. 

\begin{thma}
Let $G$ be a finite $p$-group. Then equality holds in \eqref{bineq} for all minimal generating sets $\{x_{1}, \ldots, x_{d}\}$ of $G$ if and only if equality holds for $G$ in \eqref{lineq}.
\end{thma}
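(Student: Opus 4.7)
The forward direction is immediate: if $|\Aut_c(G)| = |\gamma_2(G)|^d$, then for any minimal generating set $\{x_1, \ldots, x_d\}$ the chain
\[
|\gamma_2(G)|^d \;=\; |\Aut_c(G)| \;\le\; \prod_{i=1}^d |x_i^G| \;\le\; |\gamma_2(G)|^d
\]
collapses to equalities, giving equality in \eqref{bineq} and, en passant, $|x_i^G| = |\gamma_2(G)|$ for each $i$.

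For the converse, assume equality in \eqref{bineq} for every minimal generating set, and write $N := |\Aut_c(G)|$ and $f(x) := |x^G|$, so that $\prod_i f(x_i) = N$ for every such set. The first step is to show $f$ is constant on $G - \Phi(G)$. Comparing the products for $\{x_1, x_2, \ldots, x_d\}$ and $\{y, x_2, \ldots, x_d\}$---both minimal generating sets precisely when $y \notin H := \langle \Phi(G), x_2, \ldots, x_d\rangle$---yields $f(x_1) = f(y)$. Since for $d \ge 2$ any two elements of $G - \Phi(G)$ lie outside some common pair of such maximal subgroups (and $d = 1$ is trivial), $f$ is a constant $c$ on $G - \Phi(G)$, so $N = c^d$.

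It remains to identify $c$ with $|\gamma_2(G)|$. The hypothesis forces the natural injection $\Aut_c(G) \hookrightarrow \prod_i G/C_G(x_i)$ defined by $\alpha \mapsto (g_i C_G(x_i))_i$ with $\alpha(x_i) = x_i^{g_i}$ to be a bijection for every minimal generating set, so every tuple of conjugators produces a class-preserving automorphism. Applying this to the element $x_i x_j$ forces $x_i^{g_i} x_j^{g_j} \in (x_i x_j)^G$ for all $g_i, g_j \in G$; a short calculation modulo $\gamma_3(G)$, where the commutator bracket is biadditive, gives
\[
[x_i, G]\,[x_j, G]\,\gamma_3(G) \;\subseteq\; [x_i x_j, G]\,\gamma_3(G),
\]
and the reverse inclusion is automatic from $[xy, g] \equiv [x, g][y, g] \pmod{\gamma_3(G)}$. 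Hence these subgroups of $\gamma_2(G)/\gamma_3(G)$ coincide, and iterating over products of generators (together with the fact that $\gamma_2(G)/\gamma_3(G)$ is generated by the commutators $[x_i, x_j]$) forces $[x, G]\,\gamma_3(G) = \gamma_2(G)$ for every $x \in G - \Phi(G)$. This settles the class-$2$ case, where $\gamma_3(G) = 1$.

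The hard part will be upgrading $[x, G]\,\gamma_3(G) = \gamma_2(G)$ to the full equality $[x, G] = \gamma_2(G)$ when the nilpotency class of $G$ exceeds $2$. The plan is to iterate the analysis up the lower central series, applying the surjectivity property to longer products $x_{i_1} x_{i_2} \cdots x_{i_r}$ of generators and using the commutator identities $[xy, g] = [x, g]^y[y, g]$ and $[x, yz] = [x, z][x, y]^z$ to control the correction terms in each successive $\gamma_{k+1}(G)$. Once $[x, G] = \gamma_2(G)$ for every $x \in G - \Phi(G)$, we have $c = |\gamma_2(G)|$ and hence $|\Aut_c(G)| = c^d = |\gamma_2(G)|^d$, completing the proof.
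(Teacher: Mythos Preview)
Your argument for the easy direction and for the constancy of $|x^G|$ on $G-\Phi(G)$ is correct (and in fact cleaner than the paper's, which builds a special generating set $y_1,\dots,y_d$ by successive maximisation). The problem is everything after that.

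First, the proof is explicitly unfinished: you write ``The hard part will be upgrading $[x,G]\gamma_3(G)=\gamma_2(G)$ to the full equality\dots\ The plan is to iterate\dots''. A plan is not a proof, and the commutator-calculus induction you sketch is both unpleasant and unnecessary. Second, even the class-$2$ step you do attempt has a gap. From the surjectivity onto conjugator tuples you correctly obtain, with $A_i:=[x_i,G]\gamma_3(G)/\gamma_3(G)$, that $A_iA_j=A_{ij}$ as subgroups of $\gamma_2(G)/\gamma_3(G)$. But to go from this to $A_i=\gamma_2(G)/\gamma_3(G)$ you would need the $A_i$ to have equal size, and $|[x_i,G]|=c$ in $G$ does \emph{not} a priori give $|A_i|=|A_j|$ after reduction modulo $\gamma_3(G)$. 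Your appeal to ``$\gamma_2/\gamma_3$ is generated by the $[x_i,x_j]$'' does not close this, since $[x_k,x_l]$ need not lie in $A_i$ for $i\notin\{k,l\}$.

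The paper avoids all of this by working directly in $G$, never passing to $G/\gamma_3(G)$. From $x^Gy^G=(xy)^G$ one gets the \emph{set} equality $[x,G]^y[y,G]=[xy,G]$. Now use the constancy you already proved: the three sets $[x,G]^y$, $[y,G]$, $[xy,G]$ all have cardinality $c$, and each of $[x,G]^y$ and $[y,G]$ contains $1$. Hence $[x,G]^y\subseteq[xy,G]$ and $[y,G]\subseteq[xy,G]$, forcing $[x,G]^y=[y,G]=[xy,G]$. Substituting back gives $[y,G][y,G]=[y,G]$, so $[y,G]$ is a \emph{subgroup}; and since it is the same subgroup for every $y\in G-\Phi(G)$ and these generate $\gamma_2(G)$, you get $[y,G]=\gamma_2(G)$ in one stroke, with no induction on nilpotency class.
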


The condition `$[x, G] = \gamma_2(G)$ for all $x \in G - \Phi(G)$' (noticed in the statement (ii) above) looks like the one in the definition of a Camina group. Notice that  all Camina groups enjoy this property, but the converse is not always true as  shown by metacyclic $p$-groups, where $p$ is an odd prime.  For this reason,  a non-abelian finite group $G$ will be called \emph{Camina-type} if  $[x, G] = \gamma_2(G)$  (or equivalently $x^G = x\gamma_2(G)$) for all $x \in G - \Phi(G)$. Notice that a Camina-type group $G$ is  a Camina group if and only if $\gamma_2(G) = \Phi(G)$ (Since the nilpotency class of  finite Camina $p$-groups is bounded above by $3$ \cite{DS96}, it follows from \cite[Corollary 2.3]{iM81} that $\gamma_2(G) = \Phi(G)$ for all finite Camina $p$-groups $G$).

Notice (statement (ii) above) that any finite $p$-group $G$ satisfying Hypothesis A is a Camina-type group. But when the nilpotency class of $G$ is $2$, then the converse also holds true (see  Proposition \ref{prop1b}). Thus in the case of class $2$, the class of Camina-type finite $p$-groups coincides with the class of finite $p$-groups satisfying Hypothesis A. Since all finite Camina $p$-groups of nilpotency class $2$ satisfy Hypothesis A and  classification of such groups is not known (to the best of our knowledge), it seems that a complete classification of finite $p$-groups of class $2$ satisfying Hypothesis A is a difficult task. But we have been able to obtain some interesting structural information (similar to Camina groups) for these groups, shown in the following result proved in Section 5. By $d(G)$ we denote the number of elements in a minimal generating set of a finite group $G$.

\begin{thmb}
 Let $G$ be a  Camina-type finite $p$-group of nilpotency class $2$. Then the following statements hold true:

{\rm (i)} $d(G)$ is even and $d(G) \ge 2d(\gamma_2(G))$.

{\rm (ii)} $G/\Z(G)$ is homocyclic.
 
{\rm (iii)} If $\gamma_2(G)$ is cyclic, then $G$ is isoclinic to a (full) central product of $2$-generator groups of nilpotency class $2$.
\end{thmb}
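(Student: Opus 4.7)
The starting observation is that for any non-abelian Camina-type finite $p$-group $G$, one must have $\Z(G) \subseteq \Phi(G)$: otherwise, a central element $z \notin \Phi(G)$ would force $\gamma_2(G) = [z, G] = 1$, contradicting non-abelianness. Consequently $d(G) = d(G/\Z(G)) =: d$, the commutator descends to a well-defined alternating bilinear pairing $b \colon G/\Z(G) \times G/\Z(G) \to \gamma_2(G)$, and the Camina-type hypothesis guarantees that $b(\bar x, \cdot)$ is surjective for every $\bar x \notin \Phi(G/\Z(G))$. I will also use the standard class-$2$ identity $\exp \gamma_2(G) = \exp(G/\Z(G))$, writing this common value as $p^k$.

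For part (ii), decompose $G/\Z(G) \cong \bigoplus_{i=1}^{d} \mathbb{Z}/p^{e_i}$ with $e_1 \geq e_2 \geq \cdots \geq e_d$, so $e_1 = k$, and pick a minimal generating set $x_1, \ldots, x_d$ of $G$ whose images generate the corresponding cyclic summands. Each $x_i \notin \Phi(G)$, so $[x_i, G] = \gamma_2(G)$; on the other hand, $x_i^{p^{e_i}} \in \Z(G)$ forces $[x_i, y]^{p^{e_i}} = [x_i^{p^{e_i}}, y] = 1$ for all $y$, so the exponent of $[x_i, G]$ divides $p^{e_i}$. Comparing with $\exp \gamma_2(G) = p^{e_1}$ yields $e_i = e_1$ for every $i$, which is precisely the homocyclic condition.

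For part (i), I reduce the pairing modulo $p$: with $V := G/\Phi(G) \cong \mathbb{F}_p^d$ and $\bar W := \gamma_2(G)/\gamma_2(G)^p \cong \mathbb{F}_p^e$ (where $e := d(\gamma_2(G))$), the commutator induces an alternating $\mathbb{F}_p$-bilinear map $\bar b \colon V \times V \to \bar W$, and the Camina-type condition keeps $\bar b(v, \cdot)$ surjective for every nonzero $v \in V$. Choosing a basis of $\bar W$ decomposes $\bar b$ into $e$ linearly independent alternating $\mathbb{F}_p$-forms $b_1, \ldots, b_e$ on $V$ such that every nontrivial linear combination $\sum c_i b_i$ is non-degenerate on $V$. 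Any one such combination already forces $d$ to be even. For the inequality $d \geq 2e$, consider the Pfaffian
\[
P(c_1, \ldots, c_e) \;:=\; \operatorname{Pf}\Bigl(\sum_{i=1}^{e} c_i \, M_i\Bigr),
\]
where $M_i$ is the Gram matrix of $b_i$: this is a homogeneous polynomial over $\mathbb{F}_p$ of degree $d/2$ in $e$ variables which, by construction, vanishes only at the origin of $\mathbb{F}_p^e$. If $d/2 < e$, Chevalley's theorem would produce a nontrivial zero of $P$, a contradiction; hence $d \geq 2e$.

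For part (iii), assume $\gamma_2(G) \cong \mathbb{Z}/p^k$ is cyclic. Then (i) and (ii) give $G/\Z(G) \cong (\mathbb{Z}/p^k)^{2m}$ with $d = 2m$, and the commutator becomes an alternating $\mathbb{Z}/p^k$-bilinear form whose mod-$p$ reduction is a non-degenerate symplectic form on $\mathbb{F}_p^{2m}$. By Nakayama's lemma the form itself is non-degenerate over $\mathbb{Z}/p^k$, and iterated symplectic Gram--Schmidt over this local ring (split off a hyperbolic pair, then restrict to its orthogonal, which inherits the slice-surjectivity property) produces a symplectic basis $\bar x_1, \bar y_1, \ldots, \bar x_m, \bar y_m$ of $G/\Z(G)$ with $b(\bar x_i, \bar y_j) = \delta_{ij} t$ (where $t$ generates $\gamma_2(G)$) and all other pairings zero. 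Setting
\[
H_i := \langle X_i, Y_i \mid X_i^{p^k} = Y_i^{p^k} = 1, \; [X_i, Y_i] \text{ central of order } p^k \rangle
\]
and forming the full central product $H := H_1 \ast \cdots \ast H_m$ amalgamating the $\gamma_2(H_i)$'s, the symplectic basis identifies $(G/\Z(G), \gamma_2(G))$ with $(H/\Z(H), \gamma_2(H))$ compatibly with commutation, which is the required isoclinism. The main technical obstacle in the whole argument is the Pfaffian--Chevalley step in (i): it is precisely what converts the uniform slice-surjectivity of the commutator pencil into the sharp quantitative bound $d \geq 2e$, while more elementary linear-algebra arguments yield only the weaker $e \leq d - 1$.
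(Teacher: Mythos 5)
Your proposal is correct; what differs from the paper is mainly the packaging. Part (ii) is essentially the paper's own argument (Proposition \ref{cl2prop}): the same computation $[x_i,g]^{p^{e_i}}=[x_i^{p^{e_i}},g]=1$ against $\exp\gamma_2(G)=\exp(G/\Z(G))$. For part (i), the paper quotients by $\Phi(\gamma_2(G))$, passes to a stem group so as to land on a genuine Camina group of class $2$ with $\gamma_2=\Z$ of exponent $p$, and then cites Macdonald (Theorem \ref{iM81}); you instead inline Macdonald's proof, working directly with the pencil of alternating forms that the commutator induces on $G/\Phi(G)$ with values in $\gamma_2(G)/\Phi(\gamma_2(G))$, and running the Pfaffian/Chevalley--Warning count there. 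The content is the same --- the paper reproduces exactly this argument in Lie-algebra form in Section 9 --- but your version avoids the detour through stem groups; the two points worth making explicit are that $\bar b$ is well defined on $G/\Phi(G)$ (class $2$ gives $[xg^p,y]\equiv[x,y] \bmod \gamma_2(G)^p$) and that ``alternating'' must mean zero diagonal so that the Pfaffian exists when $p=2$, which is precisely why the paper introduces strongly skew-symmetric matrices. For part (iii) the divergence is genuine: the paper passes to a stem group $H$ with $\Z(H)=\gamma_2(H)$ cyclic and invokes the Brady--Bryce--Cossey decomposition (Theorem \ref{BBC}) of class-$2$ $p$-groups with cyclic centre into central products of $2$-generator subgroups, whereas you construct a symplectic basis of $G/\Z(G)$ over $\mathbb{Z}/p^k$ by hand, splitting off hyperbolic pairs and using the slice-surjectivity guaranteed by the Camina-type condition to keep the restricted form non-degenerate on each orthogonal complement, and then write down the isoclinism onto an explicit central product of Heisenberg-type groups. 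Your route is self-contained and slightly more informative, since it exhibits the model group explicitly; the cost is carrying out symplectic Gram--Schmidt over the local ring $\mathbb{Z}/p^k$, which the citation of Brady--Bryce--Cossey lets the paper bypass. Both arguments are sound.
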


Our next result is a record of the information obtained about  Camina-type finite $p$-groups of nilpotency class larger than $2$.

\begin{thmc}
Let $G$ be a  Camina-type finite $p$-group having  nilpotency class at least $3$. Then the following statements hold true:

{\rm (i)} $d(G) = 2d(\gamma_2(G)/\gamma_3(G))$ is even, where $\gamma_3(G)$ denotes the third term in the lower central series of $G$.

{\rm (ii)} If $\gamma_2(G)$ is cyclic, then $d(G) = 2$. 

{\rm (iii)} If $d(G) =2$ and $|\gamma_2(G)/\gamma_3(G)| > 2$, then $\gamma_2(G)$ is cyclic and $G$ satisfies Hypothesis A.
\end{thmc}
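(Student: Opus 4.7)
My plan is to reduce each part to Theorem B applied to the class-$2$ quotient $\bar G := G/\gamma_3(G)$, with class $\ge 3$ providing the extra rigidity. Since $\Phi(\bar G) = \Phi(G)/\gamma_3(G)$, the Camina-type property descends: for $\bar x \in \bar G \setminus \Phi(\bar G)$, $[\bar x, \bar G] = \gamma_2(G)/\gamma_3(G) = \gamma_2(\bar G)$. Because $G$ is nilpotent of class $\ge 3$, $\gamma_2(G) \supsetneq \gamma_3(G)$, so $\bar G$ has class exactly $2$. Theorem B(i) applied to $\bar G$ yields that $d(G) = d(\bar G)$ is even and $d(G) \ge 2d(\gamma_2(G)/\gamma_3(G))$. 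For the reverse inequality in (i), I would move to the associated graded Lie ring $L = \bigoplus_i L_i$ with $L_i = \gamma_i(G)/\gamma_{i+1}(G)$. The commutator induces an alternating bilinear form $\phi \colon L_1 \otimes L_1 \to L_2$ with $\phi(x, L_1) = L_2$ for every nonzero $x$ (by Camina-type), and a triple bracket $\psi \colon L_1 \otimes L_2 \to L_3$, nonzero since class $\ge 3$. A direct calculation, after tensoring with $\mathbb F_p$, shows that the Jacobi identity $\psi(x, \phi(y, z)) + \psi(y, \phi(z, x)) + \psi(z, \phi(x, y)) = 0$ combined with the Camina-type surjectivity of $\phi(x, -)$ forces $\psi \equiv 0$ whenever $\dim_{\mathbb F_p}(L_1 \otimes \mathbb F_p) > 2 \dim_{\mathbb F_p}(L_2 \otimes \mathbb F_p)$; since $\psi \ne 0$, equality must hold, proving (i). Part (ii) is then immediate: cyclic $\gamma_2(G)$ forces $d(\gamma_2(G)/\gamma_3(G)) = 1$, and (i) gives $d(G) = 2$.

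For (iii), assume $d(G) = 2$ and $|\gamma_2(G)/\gamma_3(G)| > 2$, and write $G = \langle a, b \rangle$ with $c = [a, b]$. I would establish $\gamma_2(G) = \langle c \rangle$ (hence cyclic) by showing $\gamma_3(G) \subseteq \langle c \rangle$ via an inductive analysis of the Lie ring layers $L_i$. For a $2$-generator group, $L_2$ is cyclic, generated by $c\gamma_3(G)$, and each $L_i$ for $i \ge 3$ has $\mathbb F_p$-rank at most $2$, being spanned by $[\bar a, z]$ and $[\bar b, z]$ where $z$ generates $L_{i-1}$. The Camina-type condition combined with the Jacobi identity propagates the rank-$1$ property from $L_2$ upward; the hypothesis $|\gamma_2(G)/\gamma_3(G)| > 2$ is automatic for odd $p$ (where $|\gamma_2(G)/\gamma_3(G)| \ge p \ge 3$) and for $p = 2$ precisely excludes the borderline configuration in which $L_3$ can genuinely have rank $2$, corresponding to the $2$-generator $2$-groups of class $3$ with elementary abelian commutator of order at most $8$ identified in the abstract as the exceptional family. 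Iterated commutators of the form $[c, a, \ldots, a, b, \ldots]$ generate $\gamma_3, \gamma_4, \ldots$; cyclicity of each $L_i$ combined with the Camina-type-controlled $G$-action on $c$ places these iterated commutators inside $\langle c \rangle$, giving $\gamma_3(G) \subseteq \langle c \rangle$ and hence $\gamma_2(G) = \langle c \rangle$.

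Once $\gamma_2(G)$ is cyclic, Hypothesis A follows by a conjugacy-class count. By Theorem A, it suffices to verify equality in \eqref{bineq} for the generating set $\{a, b\}$; since $|a^G| = |b^G| = |\gamma_2(G)|$ (Camina-type), the upper bound $|\Aut_c(G)| \le |\gamma_2(G)|^2$ is achieved if and only if every pair $(a', b') \in a^G \times b^G$ extends to a class-preserving automorphism. With $\gamma_2(G)$ cyclic and $d(G) = 2$, the group $G$ is metacyclic, and a direct analysis along the lines of the earlier paper \cite{mY07} shows every such pair indeed extends, yielding $|\Aut_c(G)| = |\gamma_2(G)|^{d(G)}$, i.e., Hypothesis A. The main obstacle in this plan is the Lie-theoretic rigidity needed both for the equality step of (i) and the inductive cyclicity step of (iii): both rely on a careful combination of the Camina-type surjectivity, the Jacobi identity, and (for (iii)) the hypothesis $|\gamma_2(G)/\gamma_3(G)| > 2$ to exclude the $p = 2$ exceptional family.
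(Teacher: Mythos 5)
Your architecture matches the paper's (graded Lie ring, Jacobi identity against Camina-type surjectivity for (i); cyclicity plus a class-count for (iii)), and your derivation of (ii) directly from (i) is legitimate and in fact shorter than the paper's route through Theorem \ref{thm}. But the two hardest steps are asserted rather than proved, and one assertion is false. For (i), the inequality $d(G)\le 2\,d(\gamma_2(G)/\gamma_3(G))$ is not a ``direct calculation'' from the Jacobi identity. The working argument (Section 9) first replaces $G$ by $G/N$ with $N$ a maximal $G$-normal subgroup of $\gamma_3(G)$, so that $\dim\bar{L}_3=1$; this reduction is not cosmetic, since the bound $\dim(\bar{L}_1/\bar{C})\le n$ for $\bar{C}=\{\bar{y}\mid[\bar{L}_2,\bar{y}]=0\}$ rests on $\dim\Hom(\bar{L}_2,\bar{L}_3)=n$. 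One then needs Lemma \ref{MAlemma1} (Jacobi plus surjectivity give $[\bar{x},\bar{y}]\neq 0$ for $\bar{x}\notin\bar{C}$, $\bar{y}\in\bar{C}-\{0\}$), hence $\C(\bar{x})\cap\bar{C}=0$, and since every centralizer $\C(\bar{z})$ has dimension exactly $m-n$ while $\dim\bar{C}\ge m-n$, this forces $2(m-n)\le m$, i.e.\ $m\le 2n$. Some such centralizer-dimension argument must be supplied; the formal identity $\psi(x,\phi(y,z))+\psi(y,\phi(z,x))+\psi(z,\phi(x,y))=0$ alone does not yield the bound.

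In (iii) the statement ``with $\gamma_2(G)$ cyclic and $d(G)=2$ the group $G$ is metacyclic'' is false: Section 7 ends with a $2$-generator group of order $p^6$ with cyclic $\gamma_2(G)$ of order $p^2$ satisfying Hypothesis A that is not metacyclic (it is only isoclinic to a metacyclic group, and that is deduced \emph{after} Hypothesis A is known, so your order of deduction cannot be repaired by weakening to isoclinism). The correct engine for ``every pair in $a^G\times b^G$ extends'' is Cheng's Lemma \ref{yC82}: under $|\gamma_2(G)/\gamma_3(G)|>2$ one gets $\gamma_2(G)=\{[x,y^k]\}$, hence $|G/\Z(G)|\ge|\gamma_2(G)|^2$, and every automorphism trivial on $G/\gamma_2(G)$ is inner, so $|\Aut_c(G)|=|\Inn(G)|=|\gamma_2(G)|^2$. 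Separately, your cyclicity induction is missing its engine: cyclicity of every layer $\gamma_i(G)/\gamma_{i+1}(G)$ does not imply $\gamma_2(G)$ is cyclic (take $\gamma_2\cong\C_p\times\C_p$ over $\gamma_3\cong\C_p$). What Lemma \ref{2glemma4} actually does is produce, inside the group, an element of $\gamma_3(G)$ that both generates $\gamma_3(G)$ modulo the next term and is an honest power of $c=[x,y]$; in the base case this element is $[x,y^{p^{n-1}}]$, found from $[x,G]=\gamma_2(G)$ together with $\C_G(x)=\gamma_2(G)\Z(G)\gen{x}$, and the identity $[x,y^{p^{n-1}}]=[x,y]^{p^{n-1}}$ is precisely where $|\gamma_2(G)/\gamma_3(G)|>2$ is needed when $p=2$. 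Without that step the induction does not close.
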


We are now ready to provide a classification of finite $p$-groups satisfying Hypothesis A and having nilpotency class at least $3$.  We actually encounter some nice surprises in this case. For example, such groups can be generated by two elements. Before proceeding further, we take a little diversion to demonstrate examples of such groups (as promised above).   Consider the metacylic groups
\begin{equation}\label{Intgrp1}
K := \gen{x, y \mid x^{p^{r+t}} = 1, y^{p^r} = x^{p^{r+s}}, [x, y] = x^{p^t}},
\end{equation}
where $1 \le t < r$ and $0 \le s \le t$ ($t \ge 2$ if $p = 2$) are non-negative integers. Notice that the nilpotency class of $K$ is at least $3$. Since $K$ is generated by $2$ elements, it follows from \eqref{lineq} that $|\Aut_c(K)| \le |\gamma_2(K)|^2 = p^{2r}$. It is not so difficult  to  see that $|\Inn(K)| = |K/\Z(K)| = p^{2r}$. Since $\Inn(K) \le \Aut_c(K)$, it follows that $|\Aut_c(K)| = |\Inn(K)| = |\gamma_2(K)|^2 = |\gamma_2(K)|^{d(K)}$ (That $\Aut_c(G) = \Inn(G)$, is, in fact, true for all finite metacylic $p$-groups). Thus $K$ satisfies Hypothesis A.  Furthermore, if $H$ is any $2$-generator group isoclinic to $K$, then it follows that $H$ satisfies Hypothesis A. 

After this much diversion, we come back to the main stream and provide the desired classification in the following result, which is the main theorem of this paper. 

\begin{thmd}
Let $G$ be a finite $p$-group of nilpotency class at least $3$.  Then the following statements hold true:

{\rm (i)} If $G$ satisfies Hypothesis A, then $d(G) = 2$. 

{\rm (ii)} Let  $|\gamma_2(G)/\gamma_3(G)| > 2$. Then $G$  satisfies Hypothesis A if and only if $G$ is a $2$-generator group with cyclic commutator subgroup.  Moreover, $G$ is isomorphic to some group defined in \eqref{2ggrp} and  is isoclinic to the group $K$ defined in \eqref{Intgrp1} for suitable parameters. 

{\rm (iii)} Let $|\gamma_2(G)/\gamma_3(G)| = 2$. Then $G$  satisfies Hypothesis A if and only if $G$ is a $2$-generator $2$-group  of nilpotency class $3$ with elementary abelian $\gamma_2(G)$ of order at most $8$.
\end{thmd}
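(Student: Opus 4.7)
The plan is to derive each of the three parts from Theorems B and C (which are applicable because Hypothesis A forces $G$ to be Camina-type) combined with a careful analysis of when every tuple of class-preserving images can be realized by an automorphism. For (i), I would argue by contradiction: Theorem C(i) gives $d(G) = 2d(\gamma_2(G)/\gamma_3(G))$, so the task is to rule out $d(G) \ge 4$. Fix a minimal generating set $\{x_1,\ldots,x_d\}$; by Hypothesis A and Theorem A, every tuple $(c_1,\ldots,c_d) \in \gamma_2(G)^d$ must produce a (unique) class-preserving automorphism $\alpha$ with $\alpha(x_i) = x_i c_i$. Using standard commutator identities, the action of any such $\alpha$ on $\gamma_2(G)/\gamma_3(G)$ is automatically trivial, but at the class-$3$ level each image $\alpha([x_i,x_j])$ must still lie in the correct $G$-conjugacy class. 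Passing to the associated graded Lie ring and exploiting the surjection $(\gamma_2(G)/\gamma_3(G)) \otimes (G/\Phi(G)) \to \gamma_3(G)/\gamma_4(G)$, I would show that for $d \ge 4$ the compatibility conditions force nontrivial relations among the $c_i$, so the count of admissible tuples is strictly less than $|\gamma_2(G)|^d$, contradicting Hypothesis A. The main obstacle is precisely this strict-inequality argument; the Lie-theoretic techniques announced in the abstract, supplemented by Magma checks in small residual cases, should deliver it.

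For (ii), once (i) supplies $d(G) = 2$, the forward direction is immediate from Theorem C(iii) applied with $|\gamma_2(G)/\gamma_3(G)| > 2$, which yields cyclic $\gamma_2(G)$. For the converse, a $2$-generator $p$-group with cyclic commutator subgroup is metacyclic, and the calculation surrounding the group $K$ in \eqref{Intgrp1} shows $|\Aut_c(G)| = |\Inn(G)| = |G/\Z(G)| = |\gamma_2(G)|^2$ for such groups, so Hypothesis A holds. The isoclinism and isomorphism assertions then reduce to matching $G$ with a member of the family \eqref{2ggrp}, via the known classification of $2$-generator $p$-groups of class $\ge 3$ with cyclic commutator subgroup, all of which are isoclinic to $K$ for appropriate choices of the parameters.

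For (iii), the hypothesis $|\gamma_2(G)/\gamma_3(G)| = 2$ forces $p = 2$, and Theorem C(i) gives $d(G) = 2$. With the commutator $[x_1,x_2]$ generating $\gamma_2(G)/\gamma_3(G)$, the subgroup $\gamma_3(G)$ is normally generated by $[x_1,x_2,x_1]$ and $[x_1,x_2,x_2]$. Using commutator calculus (with the simplifications available at $p=2$) I would bound both the order and the exponent of $\gamma_2(G)$, reducing the theorem to a short list of candidate groups; a Magma verification then confirms that $\gamma_2(G)$ is elementary abelian of order at most $8$ and that $G$ has class exactly $3$. The decisive step throughout is part (i); once $d(G)=2$ is established, the remaining arguments amount to structural bookkeeping in low rank together with a finite computer check.
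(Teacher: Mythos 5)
Your overall architecture matches the paper's (Lie-theoretic reduction for (i), Cheng-type analysis for (ii), a computer check for (iii)), but the decisive steps are deferred rather than carried out, and one supporting claim is false. For (i), writing ``I would show that for $d\ge 4$ the compatibility conditions force nontrivial relations among the $c_i$ \dots\ the main obstacle is precisely this strict-inequality argument'' concedes exactly the point the theorem is about. The paper's route is: pass to $\bar G=G/N$ with $N$ a maximal $G$-normal subgroup of $\gamma_3(G)$ (Hypothesis A survives by Lemma \ref{lemma1}), so $|\gamma_3(\bar G)|=p$; then, in Theorem \ref{Cthm1}, assume $n=\dim(\bar L_2)\ge 2$, use Proposition \ref{MAprop1} to build bases $\bar x_1,\dots,\bar x_n$ of $\bar L_1$ modulo $\bar C$ and $\bar y_1,\dots,\bar y_n$ of $\bar C$ with $[\bar z_j,\bar x_i]=\delta_{ij}\bar w$ and $[\bar x_1,\bar y_j]=\bar z_j$, realize the single tuple $\alpha(y_1)=y_1[x_1,y_1]$ with $\alpha$ fixing the other generators, show that $\alpha$ fixes $\gamma_2(G)$ elementwise because the commutators $[x_i,y_2]$ generate $\gamma_2(G)$ modulo $\gamma_3(G)$ (this is precisely where the existence of $y_2$, i.e.\ $n\ge2$, is used), and read off $[x_1,[x_1,y_1]]=1$, contradicting $[\bar x_1,\bar z_1]=-\bar w\ne0$. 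Note also that the underlying equality $m=2n$ rests on the Pfaffian/Chevalley--Warning argument of Lemma \ref{MAlemma0}; and ``Magma checks in small residual cases'' cannot close the gap, since in part (i) there is no a priori bound on $|G|$.

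In (ii) your converse rests on the assertion that a $2$-generator $p$-group with cyclic commutator subgroup is metacyclic; this is false, and the paper itself closes Section 7 with a non-metacyclic $2$-generator group of order $p^6$ with cyclic $\gamma_2(G)$ satisfying Hypothesis A. The correct argument (Lemma \ref{2glemma5}) invokes Cheng's Lemma \ref{yC82}, whose hypothesis is exactly $|\gamma_2(G)/\gamma_3(G)|>2$ (equivalently $[u,G]\subseteq\gen{u^4}$), to get $\Aut_c(G)=\Inn(G)$ and $|G/\Z(G)|=|\gamma_2(G)|^2$; the conclusion is only isoclinism to a metacyclic group (Theorem \ref{2gthm1}), not metacyclicity. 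In (iii), ``reducing the theorem to a short list of candidate groups'' is where all the work lies: the paper's Theorem \ref{NYthm} sets $Y(G)=\gamma_4(G)(\gamma_2(G))^2$, passes to $H=G/M$ with $|Y(H)|=2$, observes that the fourth powers of the generators are central, and replaces $H$ by a stem group of order at most $2^8$, at which point the Magma computation (Lemma \ref{NYlemma1}) becomes a finite, conclusive check. Without an explicit bound of this kind the computer verification establishes nothing about general $G$.
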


A detailed analysis of finite $2$-generator $2$-groups $G$ satisfying Hypothesis A and with $|\gamma_2(G)/$ $\gamma_3(G)| = 2$ will be presented in \cite{NY13}.

Organization of the paper is as follows. Notations are set in Section 2, and some preliminary results are given in Section 3. Section 4 is devoted to proving Theorem A and some key results. Camina-type $p$-groups of nilpotency class $2$ are discussed in Section 5, where Theorem B is proved. Section 6 deals with Camina-type $p$-groups with cyclic commutator subgroup. $2$-generator Camina-type $p$-groups are studied in Section 7. $2$-generator $2$-groups satisfying Hypothesis A are discussed in Section 8. Macdonald's arguments \cite{iM81}  are revisited in Lie theoretic setting in Section 9, which are then used to prove Theorems C and D in Section 10. 
As an application of Theorems B and D, a characterisation of finite nilpotent groups $G$ such that $|G/\Z(G)| = |\gamma_2(G)|^{d(G)}$ is given in Section 11. 


\section{Notation}

Our notation for objects associated with a finite multiplicative group $G$ is mostly standard.  We use $1$ to denote both the identity element of  $G$ and the trivial subgroup $\{ 1 \}$ of $G$. By $\Aut(G)$, $\Aut_{c}(G)$ and $\Inn(G)$, we denote the group of all automorphisms, the group of conjugacy  class 
preserving automorphisms and the group of inner automorphisms of $G$
respectively. The abelian group of all homomorphisms from an abelian group 
$H$ to an abelian group $K$ is denoted by $\Hom(H,K)$.

We write $\gen x$ for the cyclic subgroup of $G$ generated by a given element 
$x \in G$. To say that some $H$ is a subset or a subgroup of $G$ we write 
$H \subseteq G$ or $H \leq G$ respectively. To indicate, in addition, that 
$H$ is properly contained in $G$, we write $H \subset G$, $H < G$ 
 respectively.  For a subgroup $H$ of $G$, by $H^{p^k}$ we denote the subgroup of $H$ generated by the set $\{h^{p^k} \mid h \in H\}$, where $p$ is a prime and $k$ is a positive integer.  If $x,y \in G$, then $x^y$ denotes the conjugate element 
$y^{-1}xy \in G$ and $[x,y] = [x,y]_G$ denotes the commutator 
$x^{-1}y^{-1}xy = x^{-1}x^y \in G$. If $x \in G$, then $x^G$ 
denotes the $G$-conjugacy class of all $x^g$, for $g \in G$, and $[x,G]$ 
denotes the set  $\{[x, g] \mid g \in G\}$ (and not the subgroup generated by these commutators). 
If $[x,G] \subseteq \Z(G)$, then $[x,G]$ becomes a subgroup of $G$. 
Since $x^g = x[x,g]$, for all  $g \in G$, we have $x^G = x[x,G]$. Thus $|x^G| = |[x, G]|$.

Exponent of a subgroup $H$ of $G$ is denoted by $exp(H)$. For a subgroup $H$ of $G$, $\C_{G}(H)$ denotes the centralizer of $H$ in $G$ and for an element $x \in G$, $\C_{G}(x)$ denotes the centralizer of $x$ in $G$. For  a finitely generated group $G$, by $d(G)$ we denote the number of elements in any minimal generating set of $G$. 
By $\Z(G)$, $\gamma_2(G)$ and $\Phi(G)$, we  denote the center, the commutator subgroup and the Frattini subgroup of $G$ respectively. 

We write the subgroups in the lower central series of $G$ as $\gamma_n(G)$, 
where $n$ runs over all strictly positive integers. They are defined 
inductively by
\begin{align*} 
\gamma_1(G) &= G \annd \\
\gamma_{n+1}(G) &= [\gamma_n(G), G] 
\end{align*}
for any integer $n \ge 1$. Note that $\gamma_2(G)$ is the commutator subgroup 
$[G,G]$ of $G$. 
Let $x_1, x_2,\ldots,x_k$ be $k$ elements of $G$, where $k \ge 2$. 
The commutator of  $x_1$ and $x_2$ has been defined to be 
$[x_1,x_2] = x_1^{-1}x_2^{-1}x_1x_2$.
Now we define a higher commutator of $x_1, x_2,\ldots,x_k$ inductively as 
\[[x_1,x_2,\ldots,x_k] = [[x_{1},\ldots,x_{k-1}],x_k].\]
If we are given a group $G$ with a minimal generating set $X$, then  $[x_1, x_2, \ldots, x_k]$ is said to be a commutator of weight $k$ provided all $x_i \in X$, $1 \le i \le k$.

\section{Preliminary results}

 We start with the following concept of isoclinism of groups,  introduced by P. Hall \cite{pH40}.

Let $X$ be a  group and $\bar{X} = X/\Z(X)$. 
Then commutation in $X$ gives a well defined map
$a_{X} : \bar{X} \times \bar{X} \mapsto \gamma_{2}(X)$ such that
$a_{X}(x\Z(X), y\Z(X)) = [x,y]$ for $(x,y) \in X \times X$.
Two  groups $G$ and $H$ are called \emph{isoclinic} if 
there exists an  isomorphism $\phi$ of the factor group
$\bar G = G/\Z(G)$ onto $\bar{H} = H/\Z(H)$, and an isomorphism $\theta$ of
the subgroup $\gamma_{2}(G)$ onto  $\gamma_{2}(H)$
such that the following diagram is commutative
\[
 \begin{CD}
   \bar G \times \bar G  @>a_G>> \gamma_{2}(G)\\
   @V{\phi\times\phi}VV        @VV{\theta}V\\
   \bar H \times \bar H @>a_H>> \gamma_{2}(H).
  \end{CD}
\]
The resulting pair $(\phi, \theta)$ is called an \emph{isoclinism} of $G$ 
onto $H$. Notice that isoclinism is an equivalence relation among  groups.  

Let $G$ be a finite $p$-group. Then it follows from \cite{pH40} that there exists a finite $p$-group $H$ in the isoclinism family of $G$ such that 
$\Z(H) \le \gamma_2(H)$. Such a group $H$ is called a \emph{stem group} in the isoclinism family of $G$.  

The following theorem shows that the group of class-preserving automorphisms is independent of the choice of a group in a given isoclinism family of groups.

\begin{thm}[\cite{mY08}, Theorem 4.1]\label{mY08}
Let $G$ and $H$ be two finite non-abelian isoclinic groups. Then
$\Aut_c(G) \cong \Aut_c(H)$.
\end{thm}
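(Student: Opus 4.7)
The plan is to construct an explicit isomorphism $\Psi\colon\Aut_c(G)\to\Aut_c(H)$ by encoding each $\alpha\in\Aut_c(G)$ as a cocycle-like map $\bar G\to\gamma_2(G)$ and transporting this data through the isoclinism $(\phi,\theta)$. For $\alpha\in\Aut_c(G)$, set $f_\alpha(g)=g^{-1}\alpha(g)\in\gamma_2(G)$. Three elementary properties need to be checked: (a) $f_\alpha(gh)=h^{-1}f_\alpha(g)h\cdot f_\alpha(h)$; (b) $f_\alpha$ is trivial on $\Z(G)$ and depends only on the coset $g\Z(G)$, hence descends to $\bar f_\alpha\colon\bar G\to\gamma_2(G)$, where $\bar G=G/\Z(G)$; and (c) $\bar f_\alpha(\bar g)\in[g,G]$ for every $g$, since $\alpha$ is class-preserving. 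Conversely, any map $\bar f\colon\bar G\to\gamma_2(G)$ satisfying (a)--(c) gives an element of $\Aut_c(G)$ via $\alpha(g)=g\bar f(\bar g)$, so $\Aut_c(G)$ is parametrized by the set of such maps.

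I would next isolate the compatibilities forced by the isoclinism. Since the defining diagram gives $\theta([x,y])=[h,h']$ for any lifts $h,h'$ of $\phi(\bar x),\phi(\bar y)$, three facts follow: (i) the conjugation action transfers, $\theta(g^{-1}cg)=h^{-1}\theta(c)h$ for $c\in\gamma_2(G)$ and any lift $h$ of $\phi(\bar g)$; (ii) the embedding of the commutator subgroup into the central quotient is intertwined, $\overline{\theta(c)}=\phi(\bar c)$ for every $c\in\gamma_2(G)$; (iii) as sets, $\theta([g,G])=[h,H]$. Now define $\Psi(\alpha)=\alpha^*$ by $\alpha^*(h)=h\cdot\theta(\bar f_\alpha(\phi^{-1}(\bar h)))$. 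Item (i) combined with property (a) forces $\alpha^*$ to be a homomorphism; finiteness together with $\alpha^*\equiv\mathrm{id}\pmod{\gamma_2(H)}$ yields bijectivity; item (iii) and property (c) give $\alpha^*(h)\in h[h,H]=h^H$, so $\alpha^*\in\Aut_c(H)$.

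To verify that $\Psi$ respects composition, I would unwind the identity
\[
f_{\alpha\beta}(g)=g^{-1}(\alpha\beta)(g)=\bigl(g^{-1}\beta(g)\bigr)\bigl(\beta(g)^{-1}\alpha\beta(g)\bigr)=f_\beta(g)\cdot f_\alpha(\beta(g)),
\]
apply $\theta$, and compare with $\alpha^*(\beta^*(h))=\alpha^*(h\cdot\theta(f_\beta(g)))$. The comparison reduces to recognizing that a lift of $\phi^{-1}(\overline{h\cdot\theta(f_\beta(g))})$ can be chosen to be $\beta(g)$; this is exactly compatibility (ii), since $\phi^{-1}\bigl(\overline{\theta(f_\beta(g))}\bigr)=\overline{f_\beta(g)}=\overline{g^{-1}\beta(g)}$. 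Running the same construction with the inverse isoclinism $(\phi^{-1},\theta^{-1})$ supplies a two-sided inverse, so $\Psi$ is a group isomorphism.

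The main obstacle is compatibility (ii): one must genuinely use the isoclinism $(\phi,\theta)$, not merely the existence of isomorphisms $\bar G\cong\bar H$ and $\gamma_2(G)\cong\gamma_2(H)$, to see that the two quotient maps $\gamma_2(G)\to\bar G$ and $\gamma_2(H)\to\bar H$ (with kernels $\gamma_2(G)\cap\Z(G)$ and $\gamma_2(H)\cap\Z(H)$) are intertwined by $\phi$ and $\theta$. This is exactly the content of the commutative square in the definition of isoclinism, applied to single commutators $c=[x,y]$ and extended multiplicatively. Once this point is settled, every remaining step is routine manipulation with the cocycle identity and the definitions.
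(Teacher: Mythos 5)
Your proposal is correct, and it is essentially the standard argument: the present paper only quotes this result from \cite{mY08}, and the proof there is exactly your transport construction $\alpha \mapsto \bigl(h \mapsto h\,\theta(f_\alpha(\phi^{-1}(\bar h)))\bigr)$, with your compatibility (ii) --- which does follow from the commuting square together with the fact that $\phi$ is a group isomorphism, hence carries $[\bar x,\bar y]_{\bar G}=\overline{[x,y]}$ to $[\phi(\bar x),\phi(\bar y)]_{\bar H}=\overline{\theta([x,y])}$, extended multiplicatively over products of commutators --- being the only nontrivial point. One small repair: for arbitrary finite (possibly non-nilpotent) groups, ``$\alpha^*\equiv\mathrm{id}\pmod{\gamma_2(H)}$ plus finiteness'' does not by itself give bijectivity (the endomorphism of $S_3$ obtained by composing the quotient onto $S_3/A_3$ with the inclusion of $\langle(12)\rangle$ induces the identity modulo the commutator subgroup but is not injective); bijectivity instead follows from the class-preserving property you establish in the same sentence, since $\alpha^*(h)\in h^H$ and $\alpha^*(h)=1$ force $h=1$.
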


\begin{lemma}\label{prelemma}
Let $G$ be a finite $p$-group satisfying Hypothesis A, and $H$ be a stem group in the isoclinism family of $G$. Then $d(G) = d(H)$ and $H$ satisfies Hypothesis A.
\end{lemma}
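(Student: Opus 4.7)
The plan is to reduce the claim to the containment $\Z(G)\le\Phi(G)$; once that is in hand, $d(G)=d(H)$ drops out of the isomorphism $G/\Z(G)\cong H/\Z(H)$ supplied by isoclinism, and Hypothesis~A for $H$ follows from Theorem~\ref{mY08}. Throughout I assume $G$ is non-abelian, in line with the convention recorded just after \eqref{lineq} (for abelian $G$ the stem group is trivial, so the asserted equality $d(G)=d(H)$ need not hold).

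First I would prove $\Z(G)\le\Phi(G)$ by contradiction. Suppose some $z\in\Z(G)$ lies outside $\Phi(G)$. Then $z\Phi(G)$ is a nonzero vector of the $\mathbb{F}_p$-space $G/\Phi(G)$, and by Burnside's basis theorem I extend $\{z\}$ to a minimal generating set $\{z,x_2,\ldots,x_d\}$ of $G$, where $d=d(G)$. Centrality of $z$ gives $|z^G|=1$, while $|x_i^G|=|[x_i,G]|\le |\gamma_2(G)|$ for $i\ge 2$. Substituting into \eqref{bineq} yields
\[|\Aut_c(G)|\le\prod_{i=1}^{d}|x_i^G|\le |\gamma_2(G)|^{d-1},\]
and combining with Hypothesis~A forces $|\gamma_2(G)|^{d}\le |\gamma_2(G)|^{d-1}$, so $\gamma_2(G)=1$. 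This contradicts $G$ being non-abelian.

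Next, with $\Z(G)\le\Phi(G)$ secured, the standard identity $\Phi(G/\Z(G))=\Phi(G)/\Z(G)$ gives $G/\Phi(G)\cong (G/\Z(G))/\Phi(G/\Z(G))$, so $d(G)=d(G/\Z(G))$. Because $H$ is a stem group, $\Z(H)\le\gamma_2(H)\le\Phi(H)$, and the same argument produces $d(H)=d(H/\Z(H))$. Since the isoclinism between $G$ and $H$ supplies an isomorphism $G/\Z(G)\cong H/\Z(H)$, I conclude $d(G)=d(H)$.

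For the final assertion, isoclinism also yields $\gamma_2(G)\cong\gamma_2(H)$, and Theorem~\ref{mY08} gives $|\Aut_c(G)|=|\Aut_c(H)|$. Chaining these equalities, $|\Aut_c(H)|=|\Aut_c(G)|=|\gamma_2(G)|^{d(G)}=|\gamma_2(H)|^{d(H)}$, so $H$ satisfies Hypothesis~A. The only substantive step is the first; the key observation is that slotting a central element into a minimal generating set tightens the bound \eqref{bineq} by a full factor of $|\gamma_2(G)|$, which is precisely the slack that Hypothesis~A has already exhausted.
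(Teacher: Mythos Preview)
Your proof is correct and follows essentially the same route as the paper's: both hinge on $\Z(G)\le\Phi(G)$ (which the paper simply asserts as a consequence of Hypothesis~A, whereas you spell out the contradiction via \eqref{bineq}), then use $G/\Z(G)\cong H/\Z(H)$ together with $\Z(H)\le\Phi(H)$ to match generator numbers, and finally invoke Theorem~\ref{mY08} and $\gamma_2(G)\cong\gamma_2(H)$ to transfer Hypothesis~A to $H$.
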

\begin{proof}
Since $G$ satisfies Hypothesis A, it follows that $\Z(G) \le \Phi(G)$.  Also, $H$ being a stem group,  $\Z(H) \le \Phi(H)$. Since $G/\Z(G) \cong H/\Z(H)$, $d(G) = d(G/\Z(G))$ and $d(H) = d(H/\Z(H))$, we get $d(G) = d(H)$. Now by Theorem \ref{mY08} we have $\Aut_c(G) \cong \Aut_c(H)$. As $G$ satisfies Hypothesis A, $|\Aut_c(G)| = |\gamma_2(G)|^{d(G)}$. Thus $|\Aut_c(H)| = |\gamma_2(H)|^{d(H)}$, since $\gamma_2(G) \cong \gamma_2(H)$. This proves that $H$ satisfies Hypothesis A and the proof of the lemma is complete. \hfill $\Box$

\end{proof}

The following lemma is a kind of variant of the preceding lemma with a weaker hypothesis.

\begin{lemma}\label{prelemma1}
Let $G$ be a Camina-type  finite $p$-group, and  $H$ be a stem group in the isoclinism family of $G$. Then $d(G) = d(H)$ and $H$ is also Camina-type. 
\end{lemma}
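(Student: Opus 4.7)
The plan is to mirror the proof of Lemma \ref{prelemma}, exploiting the fact that the Camina-type condition, although phrased as an element-wise equality of commutator sets, is expressible in terms of invariants preserved by isoclinism together with the Frattini subgroup. The strategy is first to force $\Z(G) \le \Phi(G)$, which gives the needed compatibility between the Frattini subgroups of $G$ and of $G/\Z(G)$ (and likewise for $H$), and then to push the equality $[x,G] = \gamma_2(G)$ through the commutative isoclinism square.

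First I would show $\Z(G) \le \Phi(G)$. Since $G$ is non-abelian Camina-type, any $x \in G - \Phi(G)$ satisfies $[x,G] = \gamma_2(G) \neq 1$, so $x \notin \Z(G)$, giving $\Z(G) \subseteq \Phi(G)$. Since $H$ is a stem group, $\Z(H) \le \gamma_2(H) \le \Phi(H)$. The standard identity $\Phi(G)/N = \Phi(G/N)$ for $N \le \Phi(G)$ applies to both $G$ and $H$, so the isoclinism isomorphism $\phi : G/\Z(G) \to H/\Z(H)$ carries $\Phi(G)/\Z(G)$ onto $\Phi(H)/\Z(H)$. In particular, $d(G) = d(G/\Z(G)) = d(H/\Z(H)) = d(H)$.

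To see that $H$ is Camina-type, take $h \in H - \Phi(H)$. Then $h\Z(H)$ lies outside $\Phi(H)/\Z(H) = \phi(\Phi(G)/\Z(G))$, so $\phi^{-1}(h\Z(H)) = g\Z(G)$ for some $g \in G - \Phi(G)$. Because the commutator $[g, g']$ depends only on the cosets $g\Z(G)$ and $g'\Z(G)$, the isoclinism square yields $\theta([g,g']) = [h,h']$ whenever $\phi(g'\Z(G)) = h'\Z(H)$. Letting $g'$ range over $G$ makes $h'$ range over a full transversal of $\Z(H)$ in $H$, giving the set equality $\theta([g, G]) = [h, H]$. Since $G$ is Camina-type and $g \in G - \Phi(G)$, we have $[g, G] = \gamma_2(G)$, and therefore $[h, H] = \theta(\gamma_2(G)) = \gamma_2(H)$. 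Non-triviality of $\gamma_2(H) \cong \gamma_2(G)$ ensures $H$ is non-abelian, so $H$ is Camina-type. I anticipate no real obstacle here; the proof is a direct adaptation of Lemma \ref{prelemma}, with the key common ingredient being that both the Camina-type hypothesis and Hypothesis A force $\Z(G) \le \Phi(G)$, after which the invariants of isoclinism do the rest.
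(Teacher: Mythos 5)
Your proposal is correct and follows essentially the same route as the paper's own proof: establish $\Z(G)\le\Phi(G)$ and $\Z(H)\le\Phi(H)$ to get $d(G)=d(H)$ from $G/\Z(G)\cong H/\Z(H)$, then transport $[g,G]=\gamma_2(G)$ through the isoclinism pair $(\phi,\theta)$ to obtain $[h,H]=\gamma_2(H)$. The only difference is cosmetic — you make explicit that $\phi$ carries $\Phi(G)/\Z(G)$ onto $\Phi(H)/\Z(H)$, which the paper leaves implicit.
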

\begin{proof}
Let $G$ be a group as in the statement. Then  it follows that $\Z(G) \le \Phi(G)$.  Also, $H$ being a stem group,  $\Z(H) \le \Phi(H)$. Since $G/\Z(G) \cong H/\Z(H)$, $d(G) = d(G/\Z(G))$ and $d(H) = d(H/\Z(H))$, we get $d(G) = d(H)$.  Let $(\phi, \theta)$ be an isoclinism of $G$ onto $H$.  Let $h \in H - \Phi(H)$ be an arbitrary element. Then there exists an element $g \in G -\Phi(G)$ such that $\phi(g\Z(G)) = h\Z(H)$. Thus by the given hypothesis $[g, G] = \gamma_2(G)$. Hence $\theta ([g, G]) = \theta (\gamma_2(G)) = \gamma_2(H)$. By the definition of isoclinism,  for any $x \in G$, $\theta([g, x]) = [h, y]$, where $\phi(x\Z(G)) = y\Z(H)$. Now $\theta([g, G] ) = \{\theta([g, x]) \mid x \in G\}$. Thus $\gamma_2(H) = \theta ([g, G]) = \{[h, y] \mid y \in H\} = [h, H]$, since $[h, y] = 1$ if $y \in \Z(H)$. This shows that $H$ is a Camina-type group and the proof is complete.  \hfill $\Box$

\end{proof}

The following result follows from the definition of Camina-type groups.
\begin{lemma}\label{prelemma2}
Let $G$ be a finite nilpotent Camina-type group. Then $G$ is a $p$-group for some prime $p$.
\end{lemma}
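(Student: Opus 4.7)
The plan is to use the fact that a finite nilpotent group is the internal direct product of its Sylow subgroups, and show that for a Camina-type group this product can have only one factor.

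Write $G = P_1 \times \cdots \times P_k$ where $P_i$ is the Sylow $p_i$-subgroup of $G$ for distinct primes $p_1, \ldots, p_k$. The Frattini subgroup and the commutator subgroup respect this decomposition: $\Phi(G) = \Phi(P_1) \times \cdots \times \Phi(P_k)$ and $\gamma_2(G) = \gamma_2(P_1) \times \cdots \times \gamma_2(P_k)$. Moreover, for any $x \in P_i$ and any $g = g_1 \cdots g_k \in G$ with $g_j \in P_j$, we have $[x, g] = [x, g_i] \in P_i$, so $[x, G] \subseteq P_i$.

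Suppose, for contradiction, that $k \geq 2$. Since each $P_i$ is a nontrivial finite $p_i$-group, $\Phi(P_i)$ is a proper subgroup of $P_i$, so we may pick $x_i \in P_i \setminus \Phi(P_i)$. Under the direct product decomposition of $\Phi(G)$, this gives $x_i \notin \Phi(G)$. The Camina-type hypothesis then yields $[x_i, G] = \gamma_2(G)$; but we just observed $[x_i, G] \subseteq P_i$, so $\gamma_2(G) \subseteq P_i$. Intersecting with $P_j$ for $j \neq i$ forces $\gamma_2(P_j) = 1$. Running this for every index $i$ (so that every $j \neq i$ gets covered), we conclude $\gamma_2(P_j) = 1$ for all $j$, whence $\gamma_2(G) = 1$. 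This contradicts the requirement in the definition of Camina-type that $G$ be non-abelian.

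Hence $k = 1$ and $G$ is a $p$-group. There is no serious obstacle here; the only thing to be careful about is to invoke non-abelianness of $G$ (built into the definition of Camina-type) to get the final contradiction, and to note that $\Phi(P_i) \subsetneq P_i$ so the element $x_i$ exists.
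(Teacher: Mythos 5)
Your proof is correct. The paper actually gives no proof of this lemma, stating only that it ``follows from the definition of Camina-type groups''; your Sylow-decomposition argument (an element $x_i \in P_i - \Phi(P_i)$ has $[x_i,G] \subseteq P_i$, which under the Camina-type condition forces $\gamma_2(P_j)=1$ for all $j \neq i$, and hence $\gamma_2(G)=1$ when there are at least two Sylow factors, contradicting non-abelianness) is exactly the natural way to fill in that omitted verification.
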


The following result is due to Brady, Bryce and Cossey.
 
\begin{thm}[\cite{BBC}, Theorem 2.1]\label{BBC}
Let $G$ be a finite $p$-group of nilpotency class $2$ with cyclic center.  Then $G$ is a central product either of  two generator subgroups with cyclic center  or   two generator subgroups with cyclic center and a cyclic subgroup.
\end{thm}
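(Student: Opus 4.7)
The plan is to induct on $|G|$, with the abelian (hence cyclic) base case trivial. In the inductive step, the main device is the commutator pairing
\[
f \colon \bar G \times \bar G \to \gamma_2(G), \qquad f(\bar x, \bar y) = [x, y],
\]
where $\bar G := G/\Z(G)$. Because $G$ has class $2$, $f$ is a well-defined alternating bilinear map; by definition of $\Z(G)$ it is non-degenerate; and since $\gamma_2(G) \le \Z(G)$ is cyclic, $f$ takes values in a cyclic $p$-group. A short comparison of orders (the image of $f(\cdot,\bar y)$ has order dividing $|\bar y|$) gives $\exp(\bar G) = |\gamma_2(G)| =: p^e$.

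Next I would isolate a two-generator block via a ``symplectic pair''. Pick $\bar x_1 \in \bar G$ of order $p^e$; non-degeneracy together with cyclicity of $\gamma_2(G)$ produces $\bar y_1 \in \bar G$ with $c := [x_1,y_1]$ generating $\gamma_2(G)$, and one checks $|\bar y_1| = p^e$ automatically. Set $U := \langle \bar x_1, \bar y_1 \rangle$ and $W := \{\bar z \in \bar G : [z,x_1] = [z,y_1] = 1\}$. The claim is that $\bar G = U \times W$. Triviality of $U \cap W$: for $\bar x_1^{\alpha}\bar y_1^{\beta} \in W$, pairing against $\bar x_1, \bar y_1$ gives $c^{\alpha} = c^{-\beta} = 1$, so $p^e \mid \alpha,\beta$, and since $\bar x_1, \bar y_1$ also have order $p^e$ this forces $\bar x_1^{\alpha} = \bar y_1^{\beta} = 1$. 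Surjectivity: given $\bar g \in \bar G$, solve $c^{\alpha} = [g,y_1]$ and $c^{\beta} = [g,x_1]^{-1}$ -- solvable precisely because $c$ generates $\gamma_2(G)$ -- to get $\bar g\,\bar x_1^{-\alpha}\bar y_1^{-\beta} \in W$.

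Now lift. Put $H := \langle x_1, y_1\rangle$ and let $K$ be the preimage of $W$ in $G$; by construction $[H,K]=1$ and $\Z(G) \le K$, and $\bar H \cdot \bar K = \bar G$ forces $HK = G$. Any $h \in H \cap K$ has image in $U \cap W = 1$, so $H \cap K \le \Z(G)$, and $G = H \ast K$ is a central product. A short argument -- any element central in $H$ commutes with $K$ via $[H,K]=1$, hence with $G = HK$ -- yields $\Z(H) = H \cap \Z(G)$ and $\Z(K) = \Z(G)$, both cyclic subgroups of the cyclic group $\Z(G)$. If $W = 1$ we are done: $G = H \ast \Z(G)$ is either simply the two-generator group $H$ with cyclic centre (when $\Z(G) \le H$) or a central product of $H$ with the cyclic factor $\Z(G)$. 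Otherwise the same non-degeneracy check shows $f|_{W \times W}$ is non-degenerate, so $W$ cannot be cyclic and $|K| < |G|$; the inductive hypothesis applied to $K$ (class $\le 2$, centre $\Z(G)$ cyclic) yields $K = H_2 \ast \cdots \ast H_r \ast C$ of the required shape, and splicing with $H$ gives the decomposition of $G$.

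The main obstacle is the orthogonal splitting in the symplectic-pair step: since $f$ takes values in a \emph{cyclic} module rather than a field, the usual symplectic-basis argument needs careful order tracking. Every crucial solvability step depends on choosing $x_1, y_1$ whose commutator actually \emph{generates} $\gamma_2(G)$, which in turn depends on $\gamma_2(G)$ being cyclic -- exactly what the cyclicity of $\Z(G)$ supplies. A secondary bookkeeping point is that the cyclic factor $C$ in the final decomposition appears at most once, corresponding to whatever portion of $\Z(G)$ is not absorbed into the two-generator blocks, matching the ``or a cyclic subgroup'' clause in the statement.
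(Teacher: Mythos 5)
Your proposal is correct, but there is nothing in the paper to compare it against: the paper does not prove this statement at all, it simply imports it as Theorem~2.1 of Brady--Bryce--Cossey \cite{BBC}. What you have written is a self-contained proof of that imported result, and it holds up. The key steps all check: since $G$ has class $2$ the commutator map descends to a nondegenerate alternating $\mathbb{Z}$-bilinear pairing $\bar G\times\bar G\to\gamma_2(G)$ with cyclic target, and the order comparison $\exp(\bar G)=|\gamma_2(G)|=p^e$ is right (the order of $\bar y$ equals the order of the cyclic subgroup $[G,y]\le\gamma_2(G)$, and some $[G,y]$ must be all of $\gamma_2(G)$ because a cyclic $p$-group generated by a family of subgroups equals the largest of them). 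Choosing $\bar x_1$ of order $p^e$ forces some $[x_1,y_1]$ to generate $\gamma_2(G)$, and from there the hyperbolic-plane splitting $\bar G=U\oplus W$ is exactly the point where cyclicity of $\gamma_2(G)$ is used: the equations $c^{\alpha}=[g,y_1]$, $c^{\beta}=[g,x_1]^{-1}$ are solvable precisely because $c$ generates. The lifting to $G=H\ast K$ with $[H,K]=1$, the identifications $\Z(H)=H\cap\Z(G)$ and $\Z(K)=\Z(G)$, and the induction on $K$ (which is non-abelian of class $2$ with cyclic centre whenever $W\neq 1$, by nondegeneracy of the restricted pairing) are all sound, and the single cyclic factor arises only from the residue of $\Z(G)$ in the terminal step, matching the statement. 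The one place I would make you spell things out slightly more is the assertion that $|\bar y_1|=p^e$ ``automatically'': it follows because $[x_1,y_1^{p^{e-1}}]=c^{p^{e-1}}\neq 1$ shows $y_1^{p^{e-1}}\notin\Z(G)$, which you clearly have in mind but do not state. In short: correct proof, elementary symplectic-pair induction, supplied where the paper only gives a citation.
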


The following result follows from Theorem 3.2 of Macdonald's paper \cite{iM81}.
\begin{thm}\label{iM81}
Let $G$ be a finite Camina $p$-group of class $2$ such that $d(G) = m$ and $d(\gamma_2(G)) = n$. Then  $m$ is even and $2n \le m$.
\end{thm}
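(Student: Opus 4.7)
The plan is to reduce the statement to classical linear algebra over $\mathbb{F}_p$. Since $G$ is Camina of class $2$, one first checks that $\gamma_2(G) = \Z(G)$: any $z \in \Z(G) \setminus \gamma_2(G)$ would have a singleton $G$-conjugacy class, contradicting $z^G = z\gamma_2(G)$. Writing $V := G/\Z(G)$ and $W := \gamma_2(G)$, the commutator descends to a $\mathbb{Z}$-bilinear alternating pairing $c : V \times V \to W$, and since $\Z(G) \le \Phi(G) = \gamma_2(G)G^p$ we have $d(V) = d(G) = m$ and $d(W) = n$.

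Next I would pass modulo $p$. Because $[x^p, y] = [x, y]^p$ holds in a class-$2$ group, $c$ descends to an alternating $\mathbb{F}_p$-bilinear form $\bar{c} : \bar{V} \times \bar{V} \to \bar{W}$ on the $\mathbb{F}_p$-spaces $\bar{V} := V/pV$ and $\bar{W} := W/pW$, of dimensions $m$ and $n$ respectively. The Camina hypothesis translates to the assertion that for every $\bar{v} \ne 0$ in $\bar{V}$, the map $\bar{c}(\bar{v}, \cdot) : \bar{V} \to \bar{W}$ is surjective (lift $\bar v$ to some $v \in G \setminus \Phi(G)$ and use $[v,G] = \gamma_2(G)$). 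Choose a basis $\phi_1, \ldots, \phi_n$ of $\bar{W}^{*}$ and set $f_i := \phi_i \circ \bar{c}$, so the $f_i$ are alternating $\mathbb{F}_p$-forms on $\bar{V}$. For any nonzero $(a_1, \ldots, a_n) \in \mathbb{F}_p^n$, the form $f := \sum_i a_i f_i$ is non-degenerate: if $\bar{v}$ lay in its radical, then $\bigl(\sum_i a_i \phi_i\bigr)\bigl(\bar{c}(\bar{v}, \bar{V})\bigr) = 0$, and $\bar{c}(\bar{v}, \bar{V}) = \bar{W}$ for $\bar{v} \ne 0$ would force $\sum_i a_i \phi_i = 0$, contradicting linear independence of the $\phi_i$.

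A single non-degenerate alternating form already forces $m = \dim \bar{V}$ to be even. For the inequality $2n \le m$, I would invoke the Pfaffian: fix a basis of $\bar V$, let $M_i$ be the matrix of $f_i$, and consider $P(a_1, \ldots, a_n) := \operatorname{Pf}\bigl(\sum_i a_i M_i\bigr)$, a homogeneous polynomial of degree $m/2$ in the $n$ variables $a_i$ over $\mathbb{F}_p$ whose vanishing exactly characterises degenerate linear combinations. The previous step says $P$ has no nontrivial zero, whereas Chevalley's theorem guarantees that any homogeneous polynomial of degree $d$ in $N > d$ variables over $\mathbb{F}_p$ must have a nontrivial zero; hence $m/2 \ge n$. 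The most delicate point is the mod-$p$ reduction: one has to verify that $\bar{c}(\bar{v}, \cdot)$ remains \emph{surjective} (not merely nonzero) for $\bar{v} \ne 0$, which uses the full force of $[v, G] = \gamma_2(G)$ together with the class-$2$ identity $[x^p, y] = [x, y]^p$ to make $\bar{c}$ well-defined in the first place.
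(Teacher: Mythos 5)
Your proof is correct and is essentially the argument the paper itself relies on: the theorem is quoted from Macdonald's paper, and the same reduction to a family of alternating $\mathbb{F}_p$-forms followed by the Pfaffian/Chevalley--Warning count is reproduced (in Lie-ring language) in the paper's Lemma \ref{MAlemma0}. You have also correctly flagged the one delicate point, namely that the forms must be alternating (strongly skew-symmetric) rather than merely skew-symmetric so that the Pfaffian exists when $p=2$.
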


\begin{thm}[\cite{mY07}, Theorem 5.1]\label{mY07}
Let $G$ be a finite $p$-group  such that $|G| = p^n$ and $|\gamma_2(G)| = p^m$. If $|\Aut_c(G)| = p^{m(n-m)}$, then $G$ is a Camina group of class $2$.
\end{thm}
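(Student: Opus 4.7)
The plan is to extract the Frattini structure of $G$ directly from the numerical hypothesis, identify $G$ as a Camina group via Hypothesis A, and finally exclude nilpotency class greater than $2$.

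First, set $d = d(G)$. The basic bound \eqref{lineq} gives $p^{m(n-m)} = |\Aut_c(G)| \le |\gamma_2(G)|^{d} = p^{md}$, hence $d \ge n-m$. Conversely, since $G/\gamma_2(G)$ is an abelian $p$-group of order $p^{n-m}$, we have $d = d(G/\gamma_2(G)) \le n-m$, with equality precisely when $G/\gamma_2(G)$ is elementary abelian. Combining these, $d = n-m$, $G/\gamma_2(G)$ is elementary abelian, and therefore $\Phi(G) = \gamma_2(G)$. In particular, the hypothesis rewrites as $|\Aut_c(G)| = |\gamma_2(G)|^{d(G)}$, so $G$ satisfies Hypothesis A.

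Next, I invoke statement (ii) recorded in the introduction: any finite $p$-group satisfying Hypothesis A has $[x, G] = \gamma_2(G)$ for every $x \in G - \Phi(G)$. Since $\Phi(G) = \gamma_2(G)$ by Step~1, this is exactly the defining property of a Camina group, so $G$ is a Camina $p$-group.

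It remains to pin the nilpotency class down to $2$. By Dark--Scoppola \cite{DS96}, the nilpotency class of any Camina $p$-group is at most $3$, so only class $3$ needs to be ruled out. Assume for contradiction that $G$ is Camina of class $3$. A class-preserving automorphism $\alpha$ is determined by the images $\alpha(x_i) = x_i c_i$ with $c_i \in [x_i, G] = \gamma_2(G)$, which at first sight gives $|\gamma_2(G)|^{d}$ choices. However, in class $3$ each commutator $[\alpha(x_i), \alpha(x_j)]$ must lie in the genuine conjugacy class $[x_i, x_j]\gamma_3(G)$, strictly smaller than $[x_i, x_j]\gamma_2(G)$, since $\gamma_2(G)$ is no longer central. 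Propagating this constraint, together with the induced compatibility on $\gamma_2(G)/\gamma_3(G)$ and on $\gamma_3(G)$, cuts the count of admissible tuples $(c_1, \dots, c_d)$ below $|\gamma_2(G)|^{d}$, contradicting Hypothesis A. Converting this informal obstruction into a clean strict inequality---transforming Hypothesis A into a genuine incompatibility with nilpotency class $3$---is the principal obstacle, and the step I expect to be the hardest: it requires a careful accounting of how class-preserving restrictions accumulate along the lower central series.
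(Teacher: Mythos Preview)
Your first two steps are correct and efficiently establish that $d(G)=n-m$, $\Phi(G)=\gamma_2(G)$, and hence that $G$ is a Camina $p$-group satisfying Hypothesis~A. Note, however, that this theorem is quoted in the present paper from \cite{mY07} as a preliminary result and is not proved here, so there is no ``paper's own proof'' to compare against; the original argument lives in \cite[Theorem~5.1]{mY07}.

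The genuine gap is your Step~3, and you correctly flag it as incomplete. But the specific informal obstruction you sketch does not actually bite. You claim that in class~$3$ the condition $[\alpha(x_i),\alpha(x_j)]\in[x_i,x_j]\gamma_3(G)$ imposes a nontrivial constraint on the tuple $(c_1,\dots,c_d)$. Expanding with $c_i,c_j\in\gamma_2(G)$ in a class-$3$ group gives
\[
[x_ic_i,\,x_jc_j]=[x_i,x_j]\,[x_i,c_j]\,[c_i,x_j],
\]
and both $[x_i,c_j]$ and $[c_i,x_j]$ already lie in $\gamma_3(G)$. So the containment $[\alpha(x_i),\alpha(x_j)]\in[x_i,x_j]\gamma_3(G)$ is automatic for \emph{every} choice of the $c_i$, and cuts nothing. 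The real obstruction to Hypothesis~A in class~$3$ is not this coset condition but rather (i) whether $x_i\mapsto x_ic_i$ even extends to an automorphism (relations must be preserved), and (ii) whether the resulting automorphism is genuinely class-preserving on \emph{all} of $G$, in particular on elements of $\gamma_2(G)-\gamma_3(G)$, whose conjugacy classes can be strictly smaller than $u\gamma_3(G)$. Turning either of these into a strict inequality requires the kind of structural work on Camina $p$-groups of class~$3$ that Macdonald \cite{iM81} carried out; it is not a bookkeeping exercise on commutators of the generators alone.
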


An automorphism $\phi$ of a group $G$ is called  \emph{central} if 
$g^{-1}\phi(g) \in \Z(G)$ for all  $g \in G$. The set of all central 
automorphisms of $G$, denoted by $\Autcent(G)$, is a normal subgroup 
of $\Aut(G)$. Notice that $\Autcent(G) = \C_{\Aut(G)}(\Inn(G))$. 

Let $\alpha \in \Autcent(G)$. Then the map $f_{\alpha}$ from $G$ into $\Z(G)$
defined by $f_{\alpha}(x) = x^{-1}\alpha(x)$ is a homomorphism which sends
$\gamma_2(G)$ to $1$. Thus $f_{\alpha}$ induces a homomorphism from
$G/\gamma_2(G)$ into $\Z(G)$. So we get a one-to-one map $\alpha \rightarrow
f_{\alpha}$ from $\Autcent(G)$ into $\Hom(G/\gamma_2(G), \Z(G))$. Conversely, 
if $f \in \Hom(G/\gamma_2(G), \Z(G))$, then $\alpha_f$ such that
$\alpha_f(x) = xf({\bar x})$ defines an endomorphism of $G$, where 
${\bar x} = x \gamma_2(G)$ . But this, in general, may not be an automorphism of $G$. More precisely, $\alpha_f$ fails to be an automorphism of $G$ when $G$ admits a non-trivial abelian direct factor. 

A group $G$ is called \emph{purely non-abelian} if it does not have a  non-trivial abelian direct factor.

The following theorem of Adney and Yen \cite{AY65} shows that if $G$ is a purely non-abelian finite group, then the mapping   $\alpha \rightarrow
f_{\alpha}$ from $\Autcent(G)$ into $\Hom(G/\gamma_2(G), \Z(G))$, defined above,  is also onto.

\begin{thm}[\cite{AY65}, Theorem 1]\label{AY65}
Let $G$ be a purely non-abelian finite group. Then the correspondence $\alpha
\rightarrow f_{\alpha}$ defined above is a one-to-one mapping of $\Autcent(G)$
onto $\Hom(G/ \gamma_2(G),$ $ \Z(G))$.
\end{thm}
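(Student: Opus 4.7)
The substance of the statement lies in surjectivity of the correspondence $\alpha \mapsto f_\alpha$, since injectivity was already observed in the paragraph preceding the theorem. The plan is therefore to take an arbitrary $f \in \Hom(G/\gamma_2(G), \Z(G))$, form the associated endomorphism $\alpha_f(x) = x f(\bar x)$ (where $\bar x = x\gamma_2(G)$), and show that $\alpha_f$ is in fact bijective when $G$ is purely non-abelian.

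That $\alpha_f$ is a central endomorphism is a short direct check: centrality of $f(\bar x)$ gives $\alpha_f(xy) = xy\, f(\bar x) f(\bar y) = \alpha_f(x)\alpha_f(y)$, and $x^{-1}\alpha_f(x) = f(\bar x) \in \Z(G)$. Since $G$ is finite, bijectivity reduces to $\Ker(\alpha_f) = 1$. If $\alpha_f(x) = 1$, then $x = f(\bar x)^{-1} \in \Z(G)$, so the kernel lies in $\Z(G)$. On $\Z(G)$ the map reads $x \mapsto x\cdot g(x)$, where $g$ is the composite $\Z(G) \hookrightarrow G \twoheadrightarrow G/\gamma_2(G) \xrightarrow{f} \Z(G)$, an endomorphism that vanishes on $\Z(G) \cap \gamma_2(G)$. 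Writing $Z := \Z(G)$ additively, the task becomes: for every $g \in \End(Z)$ that vanishes on $Z \cap \gamma_2(G)$, the endomorphism $\operatorname{id}_Z + g$ of $Z$ is injective.

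The main obstacle, and the only place where the purely non-abelian hypothesis is used, is the contrapositive step: if $(\operatorname{id}_Z + g)(x_0) = 0$ for some $0 \neq x_0 \in Z$, then $G$ must have a non-trivial abelian direct factor. The plan here is to invoke the structure theorem for finite abelian groups: pass to the relevant primary component of $Z$ to reduce to an $x_0$ of prime order, and then, inside an invariant-factor decomposition of $Z$ chosen compatibly with the subgroup $Z \cap \gamma_2(G)$, extract a cyclic summand $\gen{a} \leq Z$ with $\gen{a} \cap \gamma_2(G) = 1$ whose image in $G/\gamma_2(G)$ is also a direct summand there. A standard splitting argument on the finite abelian group $G/\gamma_2(G)$ then lifts a complement of $\gen{a}\gamma_2(G)/\gamma_2(G)$ back to a subgroup $H \leq G$ containing $\gamma_2(G)$, with $G = H \times \gen{a}$, contradicting the hypothesis. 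The delicate point is arranging $\gen{a} \cap \gamma_2(G) = 1$ while still having $g$ furnish a genuine kernel element of $\operatorname{id}_Z + g$; invariant-factor bookkeeping on $Z$, together with a short induction on $|Z|$ when the naive choice of $x_0$ lies inside $Z \cap \gamma_2(G)$, handles the combinatorics.
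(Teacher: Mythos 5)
First, note that the paper does not actually prove this statement---it is quoted from Adney--Yen \cite{AY65}---so there is no internal proof to compare against; I am judging your argument on its own.

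Your overall strategy (reduce surjectivity to injectivity of each $\alpha_f$, observe that $\Ker(\alpha_f)\le \Z(G)$, and argue by contraposition that a non-trivial kernel forces an abelian direct factor) is the right one, and everything up to and including the reduction to the injectivity of $\mathrm{id}_Z+g$ on $Z=\Z(G)$ is correct. The gap is in the one step you yourself flag as delicate: producing the cyclic direct factor $\gen{a}$. The mechanism you propose---an invariant-factor decomposition of $Z$ chosen compatibly with $Z\cap\gamma_2(G)$---only sees the pair $(Z,\,Z\cap\gamma_2(G))$, and that is not enough data: whether $\gen{a\gamma_2(G)}$ is a direct summand of $G/\gamma_2(G)$ depends on how $Z$ sits relative to $G^p\gamma_2(G)$, which such a decomposition does not control. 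Concretely, for $G=\gen{x,y\mid x^{p^2}=y^{p^2}=1,\ x^y=x^{1+p}}$ (purely non-abelian), the element $a=y^p$ generates a cyclic summand of $Z\cong C_p\times C_p$ complementary to $Z\cap\gamma_2(G)=\gen{x^p}$, yet $\gen{a\gamma_2(G)}$ is \emph{not} a direct summand of $G/\gamma_2(G)\cong C_p\times C_{p^2}$ and $\gen{a}$ is not a direct factor of $G$. So the bookkeeping you describe can hand you an invalid $a$; a correct choice must use $f$ itself, and the sketch does not say how. A further sign that the difficulty has been mislocated: the case your induction is meant to handle ($x_0\in Z\cap\gamma_2(G)$) cannot occur, since $g$ vanishes there while $g(x_0)=-x_0\ne 0$.

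The standard way to close the gap avoids invariant factors entirely. Set $h(x)=f(\bar x)^{-1}$, a homomorphism $G\to \Z(G)$ (central values commute), so that $\Ker(\alpha_f)=\{z\in \Z(G)\mid h(z)=z\}=:K$. Since $h$ fixes $K$ pointwise, $K\le h^n(G)$ for every $n$; let $N=\bigcap_n h^n(G)$ be the stable image, so $N\ne 1$ if $K\ne 1$, and $h$ restricts to an automorphism of $N$. Choosing $n$ large and divisible by the order of $h|_N$ in $\Aut(N)$, the map $r=h^n\colon G\to N$ is a homomorphic retraction onto the central subgroup $N$, whence $G=N\times\Ker(r)$ with $N$ a non-trivial abelian direct factor---the desired contradiction. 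If you want to keep your framework, you can then take $\gen{a}$ to be any cyclic direct summand of this $N$; but some such use of $f$ beyond merely producing $x_0$ is unavoidable.
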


The following lemma is due to  Cheng \cite[Lemma 1]{yC82}. 

\begin{lemma}\label{yC82}
Let $G = \gen{x, y}$ be a $2$-generator finite  $p$-group with cyclic commutator subgroup $\gamma_2(G) = \gen{u}$ of order $p^r$ for some positive integer $r$. Assume that either $p$ is odd, or $p = 2$ and $[u, G] \subseteq \gen{u^4}$. Then the following statements hold true:

{\rm (i)} $\gamma_2(G) = \{[x, y^k] \mid 0~ \le k < p^r\}$;

{\rm (ii)} If $\phi$ is an automorphism of $G$ which induces the identity automorphism on $G/\gamma_2(G)$, then $\phi \in \Inn(G)$. 
\end{lemma}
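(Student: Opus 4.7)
My proof plan for part (i) starts by writing $u^y = u^{\alpha}$, where $\alpha$ is the integer (modulo $p^r$) recording the action of $y$ on the characteristic cyclic subgroup $\gamma_2(G) = \gen{u}$. Because $G$ is a $p$-group and, in the $p = 2$ case, the hypothesis $[u, G] \subseteq \gen{u^4}$ forces $u^y \in u\gen{u^4}$, one has $\alpha = 1 + p^s c$ for some integer $c$, with $s \ge 1$ in general and $s \ge 2$ when $p = 2$. If $y$ centralizes $u$, statement (i) is immediate; otherwise $\gcd(c, p) = 1$. A direct induction on $k$ using the identity $[x, y^k] = [x, y] \cdot [x, y^{k-1}]^y$ then yields
\[
[x, y^k] = u^{f(k)}, \qquad f(k) := 1 + \alpha + \alpha^2 + \cdots + \alpha^{k-1}.
\]

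The remaining task for (i) is to verify that $f$ induces a bijection on $\mathbb{Z}/p^r\mathbb{Z}$. From $f(k) - f(j) = \alpha^j f(k - j)$ and the fact that $\alpha^j$ is a unit modulo $p^r$, injectivity reduces to showing that $f(m) \not\equiv 0 \pmod{p^r}$ whenever $0 < m < p^r$. Using $(\alpha - 1) f(m) = \alpha^m - 1$ together with the Lifting the Exponent lemma --- whose hypotheses ($p \mid \alpha - 1$ for odd $p$, or $4 \mid \alpha - 1$ for $p = 2$) match exactly the standing assumption --- one obtains $v_p(\alpha^m - 1) = s + v_p(m)$, whence $v_p(f(m)) = v_p(m) < r$, as required.

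For part (ii), suppose $\phi \in \Aut(G)$ acts as the identity on $G/\gamma_2(G)$, so that $\phi(x) = x u_1$ and $\phi(y) = y u_2$ for some $u_1, u_2 \in \gen{u}$. I look for $g \in G$ such that conjugation by $g$ realizes $\phi$ on the generators, i.e., such that $[x, g] = u_1$ and $[y, g] = u_2$. Trying $g = x^a y^b$ and expanding with the identity $[x, bc] = [x, c][x, b]^c$ yields
\[
[x, g] = [x, y^b], \qquad [y, g] = [y, x^a]^{y^b}.
\]
Part (i) produces $b$ with $[x, y^b] = u_1$. The hypothesis of the lemma is symmetric in $x$ and $y$ since $[y, x] = u^{-1}$ generates $\gen{u}$ and the condition on $\gen{u^4}$ depends only on the subgroup $\gen{u}$; thus (i) applied with $x$ and $y$ interchanged produces $a$ with $[y, x^a] = u_2^{\alpha^{-b}}$. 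Conjugating by $y^b$, which acts on $\gen{u}$ as raising to the unit $\alpha^b$, gives $[y, g] = u_2$. Since $\phi$ and inner conjugation by $g$ agree on the generating set $\{x, y\}$, they agree on all of $G$, and $\phi \in \Inn(G)$.

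The main obstacle is the bijectivity of $f$ in (i), which is precisely where the hypothesis for $p = 2$ is essential: it rules out the case $\alpha \equiv -1 \pmod{4}$, for which the LTE formula fails and $f$ genuinely need not be a bijection. Everything else --- the commutator expansions, the symmetry under swapping $x$ and $y$, and the reduction of (ii) to the two commutator equations --- is formal bookkeeping once the ansatz $g = x^a y^b$ has been identified.
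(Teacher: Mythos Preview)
Your argument is correct. The paper itself does not prove this lemma: it is stated with attribution to Cheng \cite{yC82} and used as a black box, so there is no ``paper's own proof'' to compare against.

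Your approach is the natural one and is essentially what one finds in Cheng's paper. A couple of minor remarks. First, you are tacitly taking $u = [x,y]$; this is harmless since a $2$-generator $p$-group with cyclic $\gamma_2(G)$ has $\gamma_2(G)$ generated by $[x,y]$ (it generates $\gamma_2(G)/\gamma_3(G)$, and $\gamma_3(G)$ is a proper subgroup of the cyclic group $\gamma_2(G)$), but it is worth saying explicitly. Second, your use of Lifting the Exponent is exactly right, and it is precisely here that the hypothesis $[u,G]\subseteq\gen{u^4}$ for $p=2$ earns its keep: without it one could have $\alpha\equiv -1\pmod 4$, in which case $f(2)=1+\alpha\equiv 0\pmod{p^r}$ is possible and the map $k\mapsto f(k)$ fails to be injective. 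Finally, in part (ii) your ansatz $g=x^ay^b$ and the reduction to the two commutator equations via part (i) is clean; the symmetry argument allowing you to swap the roles of $x$ and $y$ is valid because the hypothesis depends only on the subgroup $\gen{u}$, not on the chosen generator.
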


\section{Proof of Theorem A and key results}

We start with the proof of Theorem A.

\begin{thm}[Theorem A]\label{prop0a}
Let $G$ be a finite $p$-group. Then equality holds in \eqref{bineq} for all minimal generating sets $\{x_{1}, \ldots, x_{d}\}$ of $G$ if and only if equality holds for $G$ in \eqref{lineq}.
\end{thm}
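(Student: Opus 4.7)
\emph{Backward direction.} If $|\Aut_c(G)| = |\gamma_2(G)|^d$, then for any minimal generating set $\{x_1, \ldots, x_d\}$ of $G$, the chain
\[
|\gamma_2(G)|^d = |\Aut_c(G)| \le \prod_{i=1}^d |x_i^G| \le |\gamma_2(G)|^d,
\]
using $|x_i^G| = |[x_i, G]| \le |\gamma_2(G)|$, forces equality throughout. So \eqref{bineq} holds with equality for every minimal generating set, and moreover $[x_i, G] = \gamma_2(G)$ for each $i$.

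\emph{Forward direction.} Assume equality in \eqref{bineq} holds for every minimal generating set of $G$. The first step is to show $|x^G|$ is constant over $x \in G - \Phi(G)$. Given two such elements $x, y$, the union of the two one-dimensional subspaces $\mathbb{F}_p \bar{x} \cup \mathbb{F}_p \bar{y}$ in $G/\Phi(G) \cong \mathbb{F}_p^d$ is a proper subset, so there exists a hyperplane $H$ of $G/\Phi(G)$ containing neither $\bar{x}$ nor $\bar{y}$. Lifting a basis of $H$ to elements $x_2, \ldots, x_d$ of $G$ makes both $\{x, x_2, \ldots, x_d\}$ and $\{y, x_2, \ldots, x_d\}$ minimal generating sets of $G$. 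Applying equality in \eqref{bineq} to each yields $|x^G| = |y^G|$. Denote this common value by $c$; equality in \eqref{bineq} for any single minimal generating set then gives $|\Aut_c(G)| = c^d$.

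The remaining and harder task is to show $c = |\gamma_2(G)|$. Fix any minimal generating set $\{x_1, \ldots, x_d\}$. Equality in \eqref{bineq} implies that for each choice of $c_i \in [x_i, G]$, the prescription $x_i \mapsto x_i c_i$ extends to some $\alpha \in \Aut_c(G)$. Taking the $\alpha$ that fixes all $x_k$ except one and applying it to products $x_i x_j$, the class-preserving condition $\alpha(x_i x_j) \in (x_i x_j)^G$ forces $[x_i, G]^{x_j} \subseteq [x_i x_j, G]$; by size equality (all three sets have cardinality $c$) this becomes $[x_i, G]^{x_j} = [x_j, G] = [x_i x_j, G]$. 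When $G$ has nilpotency class $2$, $[x, G]$ is a subgroup of $\gamma_2(G) \subseteq \Z(G)$ and conjugation acts trivially, so all $[x_i, G]$ coincide with a common subgroup $N \le \gamma_2(G)$; since $N$ contains every $[x_i, x_j]$ and these generate $\gamma_2(G)$ in class $2$, we get $N = \gamma_2(G)$ and $c = |\gamma_2(G)|$. For higher class, induction on the nilpotency class (passing to $G/\gamma_n(G)$) combined with the same analysis at each step gives the result.

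The main obstacle is the last step when $G$ has nilpotency class at least $3$: here $[x, G]$ is only a subset of $\gamma_2(G)$ (not a subgroup), and the identity $[xy, g] = [x, g]^y [y, g]$ introduces correction terms lying in $\gamma_3(G)$ which one must carefully track modulo successive terms of the lower central series in order to conclude $[x, G] = \gamma_2(G)$ in full.
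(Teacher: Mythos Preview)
Your backward direction matches the paper. Your first step in the forward direction---showing $|x^G|$ is constant on $G-\Phi(G)$ via a common hyperplane complement---is different from and arguably cleaner than the paper's argument, which builds a chain $H_0 < H_1 < \cdots$ by successively choosing elements of maximal class size and then compares $|y_{i-1}^G|$, $|y_i^G|$, $|(y_{i-1}y_i)^G|$.

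The genuine gap is in your second step. You correctly derive $[x_i,G]^{x_j} = [x_j,G] = [x_ix_j,G]$ by moving one generator at a time, but then you split into the class-$2$ case and an unfinished induction for higher class, and you yourself flag the obstacle that $[x,G]$ need not be a subgroup. The paper avoids this entirely, and in fact you are one line away from its argument: instead of moving only one of $x_i,x_j$, take $\alpha\in\Aut_c(G)$ with $\alpha(x_i)=x_ic_i$ and $\alpha(x_j)=x_jc_j$ simultaneously. Then $\alpha(x_ix_j)=x_ix_j\,c_i^{x_j}c_j\in (x_ix_j)^G$, which gives the \emph{product} relation
\[
[x_i,G]^{x_j}\,[x_j,G] \;=\; [x_ix_j,G].
\]
Substituting your already-established equalities $[x_i,G]^{x_j}=[x_j,G]=[x_ix_j,G]$ yields $[x_j,G]\,[x_j,G]=[x_j,G]$, and a nonempty finite subset of a group closed under multiplication is a subgroup. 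So $[x_j,G]$ is a subgroup for every $j$, independent of nilpotency class. From $[x_i,G]^{x_j}=[x_j,G]$ and symmetry one gets that all $[x_i,G]$ coincide; this common subgroup then contains every $[x_i,x_j]$ and is normal, hence equals $\gamma_2(G)$. No induction on class is needed, and the ``correction terms in $\gamma_3(G)$'' you worry about never enter the picture.
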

\begin{proof}
Assume that $|\Aut_c(G)| = \Pi_{i=1}^d |x_i^G|$ for any minimal generating 
set $\{x_1, \ldots, x_d\}$ of $G$.
Let $y_1 \in G - \Phi(G)$ such that $|y_1^G| \ge |y^G|$ for all $y \in G -
\Phi(G)$. Let $H_0 = \Phi(G)$ and define $H_1 = \gen{\Phi(G), y_1}$. 
Now inductively define $H_{i} =
\gen{H_{i-1}, y_i}$ such that $|y_i^G| \ge |y^G|$ for all $y \in G - H_{i-1}$,
where $1 \le i \le d$. Notice that the elements $y_1, \ldots, y_d$ form a
minimal generating set for $G$. So by the given hypothesis 
$|\Aut_c(G)| = \Pi_{i=1}^d |y_i^G|$. This implies that for given $g_1,
\ldots, g_d \in G$, there exists an automorphism $\alpha \in \Aut_c(G)$ such
that $\alpha (y_i) = y_i^{g_i}$, where $1 \le i \le d$.

First we prove that $|\Aut_c(G)| = (|y_1^G|)^d$.
Since $y_{i} \in H_{i}-H_{i-1}$
and $y_{i-1} \in H_{i-1}$, it follows that $y_{i-1}y_i \in H_{i}-H_{i-1}$,
where $2 \le i \le d$. 
By our choice of $y_i$ we have $|(y_{i-1}y_i)^G| \le |y_i^G|$. Now given
any two elements $g_{i-1}, g_i \in G$, 
there exists an automorphism $\alpha \in \Aut_c(G)$ such that $\alpha (y_j) =
y_j$ for all $1 \le j \le i-2$ and $i+1 \le j \le d$, 
$\alpha (y_{i-1}) = y_{i-1}^{g_{i-1}}$ and $\alpha (y_i) = y_i^{g_{i}}$. So
$$y_{i-1}^{g_{i-1}}y_i^{g_i} = \alpha (y_{i-1})\alpha (y_i) = 
\alpha (y_{i-1}y_i) \in (y_{i-1}y_i)^G.$$
This simply implies that $y_{i-1}^Gy_i^G \subseteq (y_{i-1}y_i)^G$. Since
$(y_{i-1}y_i)^G$ is always contained in $y_{i-1}^Gy_i^G$, we have 
\begin{equation}
\label{eqn1} y_{i-1}^Gy_i^G = (y_{i-1}y_i)^G.
\end{equation}
 This, using the fact that 
$|(y_{i-1}y_i)^G| \le |y_i^G|$, gives $|y_{i-1}^Gy_i^G| \le |y_i^G|$. 
Since $y_i \in G - H_{i-1} \subseteq G - H_{i-2}$, the definition of
$y_{i-1}$ tells us that $|y_i^G| \le |y_{i-1}^G|$.  Hence
\[ |y_i^G| \le |y_{i-1}^G| \le |y_{i-1}^Gy_i^G| \le |y_i^G|,  \]
so that $|y_{i-1}^G| = |y_i^G|$ for each $i$ such that $2 \le i \le d$.
This proves that $|y_1^G| = |y_2^G| = \cdots = |y_d^G|$ and therefore
\begin{equation}
\label{eqn2} |\Aut_c(G)| = (|y_1^G|)^d.
\end{equation}

Let $x$ be an arbitrary element of $G-\Phi(G)$. We can extend $x$ to a 
minimal generating set $\{x_1 = x, x_2, \ldots, x_d\}$ of $G$. By the given
hypothesis $|\Aut_c(G)| = \Pi_{i=1}^d |x_i^G|$ and therefore
$\Pi_{i=1}^d |x_i^G| = (|y_1^G|)^d$. Since $|x_i^G| \le |y_1^G|$ by the 
definition of $y_1$, this implies that $|x_i^G| = |y_1^G|$,
for $1 \le i \le d$. Thus $|x^G| = |x_1^G| = |y_1^G|$. This proves that
$|x^G| = |y^G|$ for all $x, y \in G-\Phi(G)$.

Now let $x,y$ be two distinct elements of some minimal generating set for $G$.
Then it follows from \eqref{eqn1} that $x^Gy^G = (xy)^G$.  
Notice that  $x^G = x[x,G]$, $y^G = y[y,G]$ and $(xy)^G
= xy[xy,G]$. The equation $x^Gy^G = (xy)^G$ then implies that $x[x,G]y[y,G]
= xy[x,G]^y[y,G]$ is equal to $xy[xy,G]$, i.~e., that
\begin{equation}
\label{eqn3}  [x,G]^y[y,G] = [xy,G].  
\end{equation}

Since $x, y$ and $xy$ all lie in $G - \Phi(G)$, it follows that
$|x^G| = |y^G| = |(xy)^G|$, and hence that $|[x,G]| = |[y,G]| =
|[xy,G]|$. Since both $[x,G]$ and $[y,G]$ contain $1$, and $G$ is a
finite group, this and \eqref{eqn3} imply that
\begin{equation}
\label{eqn4} [x,G]^y = [y,G] = [xy,G].   
\end{equation}

 Substituting \eqref{eqn4} in \eqref{eqn3}, we obtain $[y,G][y,G] = [y,G]$.  
Since $[y,G]$ is a non-empty subset of the finite group $G$, this forces it to
be a subgroup of $G$.  Furthermore, this subgroup remains the same when
$y$ is replaced by $xy$, i. e., $[y,G]=[xy,G]$. Interchanging the role of $x$
and $y$ we get $[x,G]=[yx,G]$. Since $xy$ and $yx$ are conjugate, it follows
that $[xy, G] = [yx,G]$ and therefore $[x,G] = [y, G]$. This implies that 
$[x_1,G] = \cdots = [x_d,G] = [x,G]$ (say) for any minimal generating set
$X = \{x_1, \ldots, x_d\}$ of $G$. Now we claim that $[x,G] = [u,G]$ for every $u
\in G-\Phi(G)$. So let $u \in G-\Phi(G)$ be an arbitrary element. Then $u =
x_{i_1}^{a_1}x_{i_2}^{a_2} \cdots x_{i_t}^{a_t}$, where $x_{i_j} \in X$ and
$a_j$ is a positive integer for $1 \le  j \le t$. Notice that 
$[x_{i_j}^{a_j},G] \le [x,G]$ for all $j$ such that $1 \le  j \le t$ and 
$[x_{i_j}^{a_j},G] = [x,G]$ if $x_{i_j}^{a_j} \not\in \Phi(G)$. This implies
that $[x_{i_j}^{a_j} x_{i_k}^{a_k},G] \le [x,G]$, for $1 \le j, k \le t$. So 
$[u,G] \le [x,G]$. Since $u \in G-\Phi(G)$,  some $x_{i_j}^{a_j}$ must be 
outside $\Phi(G)$. This proves that $[u,G] = [x,G]$ and hence our claim follows.  

So we have proved that  the subgroup
$[y,G]$ is independent of the choice of $y \in G - \Phi(G)$ whenever $d
\ge 2$, i.~e., whenever $G$ is non-cyclic.
Of course $[y,G] = 1$ is
also independent of the choice of $y$ when $G$ is cyclic.  Since the
derived group $\gamma_2(G) = [G,G]$ is generated by all the $[y,G]$ for
$y \in G - \Phi(G)$, this proves $[y,G] = \gamma_2(G)$ for all $y \in
G-\Phi(G)$. So in particular $|y_1^G| = |[y_1, G]| = |\gamma_2(G)|$. Hence it
follows from \eqref{eqn2} that $|\Aut_c(G)| = |\gamma_2(G)|^d$.

Conversely suppose that $|\Aut_c(G)| = |\gamma_2(G)|^d$ and $\{x_1,
\ldots, x_d\}$ is an arbitrary minimal generating set for $G$. Then 
\[|\gamma_2(G)|^d = |\Aut_c(G)| \le \Pi_{i=1}^{d} |x_i^G| \le
|\gamma_2(G)|^d,\] 
since $|x_i^G| \le |\gamma_2(G)|$ for all $i$ between $1$ and $d$. This proves
that $|\Aut_c(G)| = \Pi_{i=1}^{d} |x_i^G|$ for every minimal generating set 
$\{x_1,\ldots, x_d\}$ of $G$. This completes the proof of the theorem.
\hfill $\Box$

\end{proof}

\begin{remark}
Theorem \ref{prop0a} also holds true if we replace Hypothesis A by the fact that $|\Cb(G)| = |\gamma_2(G)|^{d(G)}$, where $\Cb(G)$ denotes the group of all basis conjugating automorphisms of $G$. These are the automorphisms which map each element  of $G - \Phi(G)$ to its conjugate. Proof goes word to word with the proof of Theorem \ref{prop0a}.
\end{remark}



The following is a key lemma, which tells that  Hypothesis A passes to factor groups.

\begin{lemma}\label{lemma1}
Let $G$ be a finite $p$-group which satisfies Hypothesis A, and $N$ be any
non-trivial proper normal subsubgroup of $G$. Then $G/N$ satisfies Hypothesis A.
\end{lemma}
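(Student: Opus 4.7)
The plan is to apply Theorem \ref{prop0a} upstairs and to push class-preserving automorphisms of $G$ through the projection $G \onto \bar{G}:=G/N$. Setting $\bar{d} = d(\bar{G})$, since \eqref{lineq} supplies the matching upper bound, the goal reduces to producing at least $|\gamma_2(\bar{G})|^{\bar{d}}$ distinct members of $\Aut_c(\bar{G})$.

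First I would check that $\bar{G}$ is Camina-type, so that $|\bar{y}^{\bar{G}}| = |\gamma_2(\bar{G})|$ for any $\bar{y} \in \bar{G} - \Phi(\bar{G})$. This is immediate from statement (ii) of the introduction: $G$ satisfying Hypothesis A gives $[x, G] = \gamma_2(G)$ for each $x \in G - \Phi(G)$, and any $\bar{x} \in \bar{G} - \Phi(\bar{G})$ lifts to such an $x$ (else $\bar{x}$ would lie in $\Phi(G)N/N = \Phi(\bar{G})$), so projecting modulo $N$ yields $[\bar{x}, \bar{G}] = \gamma_2(\bar{G})$.

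The next, and only non-routine, step is the lifting of minimal generating sets. I would fix a minimal generating set $\{\bar{y}_1, \dots, \bar{y}_{\bar{d}}\}$ of $\bar{G}$ and lift it to a minimal generating set $\{y_1, \dots, y_d\}$ of $G$ whose tail $y_{\bar{d}+1}, \dots, y_d$ lies in $N$. The argument takes place inside $V = G/\Phi(G)$: the subspace $W := N\Phi(G)/\Phi(G)$ satisfies $V/W \cong \bar{G}/\Phi(\bar{G})$ and hence has codimension $\bar{d}$, so any lifts of the $\bar{y}_i$ span a complement of $W$ in $V$, and any $\mathbb{F}_p$-basis of $W$ represented by elements of $N$ completes the required minimal generating set.

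With this set in hand, Theorem \ref{prop0a} applied to $G$ guarantees that for every tuple $(g_1, \dots, g_d) \in G^d$ there is an $\alpha \in \Aut_c(G)$ with $\alpha(y_i) = y_i^{g_i}$. Since $\alpha$ is class-preserving and $N$ is normal, $\alpha(N) = N$, so $\alpha$ descends to a class-preserving automorphism $\bar\alpha$ of $\bar{G}$. Given any $(\bar{g}_1, \dots, \bar{g}_{\bar{d}}) \in \bar{G}^{\bar{d}}$, lift to $g_1, \dots, g_{\bar{d}} \in G$, take $g_j = 1$ for $j > \bar{d}$, and pass to $\bar\alpha$; this gives $\bar\alpha(\bar{y}_i) = \bar{y}_i^{\bar{g}_i}$ for $i \le \bar{d}$. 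Because $\bar\alpha$ is determined by its values on generators, distinct conjugate tuples produce distinct $\bar\alpha$, so
\[
|\Aut_c(\bar{G})| \;\ge\; \prod_{i=1}^{\bar{d}} |\bar{y}_i^{\bar{G}}| \;=\; |\gamma_2(\bar{G})|^{\bar{d}},
\]
which is the required inequality. The hard part, as indicated, is the Burnside-basis lift; the remainder is simply the observation that class-preserving automorphisms stabilize normal subgroups, combined with the full strength of Theorem \ref{prop0a}.
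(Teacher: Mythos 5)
Your proposal is correct and follows essentially the same route as the paper's proof: both push class-preserving automorphisms of $G$ down to $G/N$ and count, using that Hypothesis A upstairs realizes every tuple of conjugates of a minimal generating set. The only difference is cosmetic --- you lift a minimal generating set of $G/N$ to one of $G$ with tail in $N$ via the Burnside basis theorem and invoke the Camina-type property of $G/N$ to evaluate $\prod_i |\bar{y}_i^{\bar{G}}|$, whereas the paper adjusts a generating set of $G$ so that an initial segment generates $G/N$ and works directly with tuples in $\gamma_2(G)^d$; your handling of the lifting step is in fact slightly more careful than the paper's ``reordering.''
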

\begin{proof}
Let $S := \{x_1, x_2, \ldots, x_d\}$ be a minimal generating set for $G$, where $d$ is some positive integer. Since $G$ satisfies Hypothesis A, we have  $|\Aut_c(G)| = |\gamma_2(G)|^d$. Thus it follows that given any $v_1, v_2, \ldots, v_d \in \gamma_2(G)$, there exists an $\alpha \in \Aut_c(G)$ such that $\alpha(x_i) = x_iv_i$, where $1 \le i \le d$. Let $N$ be a normal subgroup of $G$. Let us reorder the given generating set such that $\{x_1N, x_2N, \ldots, x_tN\}$ generates $G/N$ for some positive integer $t \le d$ and $x_iN = N$ for all $i > t$. Since $\gamma_2(G/N) = \gamma_2(G)N/N$, any element of $\gamma_2(G/N)$ will be of the form $uN$ for some $u \in \gamma_2(G)$. 
To establish the proof of our lemma, we only need to show that $|\Aut_c(G/N)| = |\gamma_2(G/N)|^t$. It simply requires to show that given any $t$ elements $u_1N, u_2N, \ldots, u_tN$ in $\gamma_2(G/N)$, there exists an automorphism $\bar{\alpha} \in \Aut_c(G/N)$ such that $\bar{\alpha}(x_iN) = x_iN u_iN$ for $1 \le i \le t$. 
Let $u_1, u_2, \ldots, u_t, u_{t+1}, \ldots, u_d \in \gamma_2(G)$ such that $u_{t+1} = \cdots = u_d = 1$. Then there exists an automorphism $\alpha \in \Aut_c(G)$ such that $\alpha(x_i) = x_iu_i$, where $1 \le i \le d$. Since $N$ is normal in $G$, $\alpha$ keeps $N$ invariant and therefore induces an automorphism $\bar{\alpha}$ (say) on $G/N$. Notice that $\bar{\alpha}$ is the required automorphism of $G/N$, which completes the proof of the lemma.  \hfill $\Box$

\end{proof}

One other key lemma tells us that like Camina groups, the property of being Camina-type group also passes to its factor groups by normal  subgroups contained in the commutator subgroup.

\begin{lemma}\label{lemma2}
Let $G$ be a finite Camina-type group, and $N$ be its normal subgroup such that $N \le \gamma_2(G)$. Then $G/N$ is also  a Camina-type group.
\end{lemma}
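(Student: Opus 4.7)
The plan is to push the Camina-type identity $[x,G]=\gamma_2(G)$ down through the canonical epimorphism $\pi\colon G\to G/N$ in the obvious way, using only that $N$ is contained in $\gamma_2(G)$ (and hence in the Frattini subgroup $\Phi(G)$, since in a finite $p$-group $\gamma_2(G)\le\Phi(G)$, and for nilpotent groups in general $\gamma_2(G)\le \Phi(G)$ as well; by Lemma \ref{prelemma2} a nilpotent Camina-type group is a $p$-group, so the $p$-group case suffices). Note also that if $N=\gamma_2(G)$ then $G/N$ is abelian and the statement is vacuous, so we may assume $N<\gamma_2(G)$.

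First I would observe that $\Phi(G/N)=\Phi(G)/N$, because $N\le\Phi(G)$; consequently any $\bar x\in G/N-\Phi(G/N)$ is of the form $\bar x=xN$ with $x\in G-\Phi(G)$. By the Camina-type hypothesis on $G$, we have the set equality $[x,G]=\gamma_2(G)$.

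Next I would project this equality. The commutator bracket is natural, so $[x,g]N=[xN,gN]$ for every $g\in G$; hence
\[
[\bar x,G/N]=\{[xN,gN]\mid g\in G\}=\{[x,g]N\mid g\in G\}=[x,G]N/N=\gamma_2(G)/N,
\]
where in the last step I use $N\le\gamma_2(G)$. Since $\gamma_2(G/N)=\gamma_2(G)N/N=\gamma_2(G)/N$, this gives $[\bar x,G/N]=\gamma_2(G/N)$, as required. The hypothesis $N<\gamma_2(G)$ ensures $G/N$ is non-abelian, so the Camina-type definition applies nontrivially.

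There is really no obstacle here: the entire content of the lemma is the compatibility of the (set-valued) map $g\mapsto [x,g]$ with quotients, combined with the containment $N\le\gamma_2(G)\le\Phi(G)$ which lets one lift non-Frattini elements of $G/N$ to non-Frattini elements of $G$. The proof is parallel to, but slightly simpler than, the corresponding fact for Camina groups, and parallel in spirit to Lemma \ref{lemma1}, where the analogous passage to quotients is carried out for Hypothesis~A.
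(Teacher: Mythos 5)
Your proposal is correct and follows essentially the same route as the paper's proof: lift a non-Frattini element of $G/N$ to a non-Frattini element of $G$, apply the Camina-type identity $[x,G]=\gamma_2(G)$, and push it through the canonical epimorphism using $\gamma_2(G/N)=\gamma_2(G)/N$. The only difference is that you justify the lifting via $\Phi(G/N)=\Phi(G)/N$, which is more than is needed (the containment $\Phi(G)N/N\le\Phi(G/N)$, valid for any normal $N$ in a finite group, already suffices), but this is harmless.
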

\begin{proof}
Let $G$ and $N$ be as in the statement. Consider the natural homomorphism $\pi : G \rightarrow \bar{G} = G/N$ from $G$ onto $G/N$.  Let ${\bar x} = xN \in \bar{G} - \Phi(\bar{G})$ be an arbitrary element. Then $x \in G - \Phi(G)$, and therefore $[x, G] =  \gamma_2(G)$. Now $\pi(\gamma_2(G)) = \gamma_2(G)/N = \gamma_2(G/N)$.
Also $\pi([x, G]) = [\bar{x}, \bar{G}]$. Hence  $[\bar{x}, \bar{G}] = \gamma_2(G/N)$, and the proof is complete. \hfill $\Box$

\end{proof}

\section{Camina-type groups  of class $2$}

In this section we study  Camina-type finite $p$-groups of nilpotency class $2$ and prove Theorem B. We start with the following result.

\begin{thm}\label{thmcl2}
Let $G$ be a  Camina-type finite $p$-group of nilpotency class $2$. Then $d(G)$ is even and $2d(\gamma_2(G)) \le d(G)$.
\end{thm}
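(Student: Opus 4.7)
The plan is to reduce the statement to the analogous result for Camina $p$-groups of class $2$, namely Macdonald's Theorem \ref{iM81}, via two successive reductions that exploit the machinery already set up in Section 3.

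First I would reduce to the case in which $\gamma_2(G)$ is elementary abelian. Set $N := \gamma_2(G)^p$. Since the class is $2$, $\gamma_2(G) \le \Z(G)$, so $N \normaleq G$; and since $N \le \gamma_2(G)$, Lemma \ref{lemma2} gives that $G/N$ is Camina-type. Because $N \le \gamma_2(G) \le \Phi(G)$, we have $d(G/N) = d(G)$, and $\gamma_2(G/N) = \gamma_2(G)/\gamma_2(G)^p$ is the Frattini quotient of the abelian $p$-group $\gamma_2(G)$, hence elementary abelian of rank $d(\gamma_2(G))$. I may therefore replace $G$ by $G/N$ and assume $\gamma_2(G)$ is elementary abelian of rank $n := d(\gamma_2(G))$.

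Next I would pass to a stem group $H$ in the isoclinism family of $G$. By Lemma \ref{prelemma1}, $H$ is again Camina-type with $d(H) = d(G)$, and the isoclinism gives $\gamma_2(H) \cong \gamma_2(G)$, still elementary abelian of rank $n$. Being a stem group, $\Z(H) \le \gamma_2(H)$, and together with the class $2$ inclusion $\gamma_2(H) \le \Z(H)$ this forces $\Z(H) = \gamma_2(H)$. Since $\gamma_2(H)$ has exponent $p$, the identity $[x^p, y] = [x, y]^p = 1$ (valid in any class $2$ group) holds for all $x, y \in H$, so $H^p \le \Z(H) = \gamma_2(H)$. Hence $\Phi(H) = H^p \gamma_2(H) = \gamma_2(H)$, and the Camina-type condition ``$[x, H] = \gamma_2(H)$ for all $x \in H - \Phi(H)$'' coincides with the defining condition of a Camina group.

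Finally, applying Theorem \ref{iM81} to the Camina $p$-group $H$ of class $2$ yields that $d(H)$ is even and $2 d(\gamma_2(H)) \le d(H)$. Translating back, $d(G) = d(H)$ is even and $2 d(\gamma_2(G)) = 2 d(\gamma_2(H)) \le d(H) = d(G)$, which is the desired conclusion. The only step with any real content is the observation in the middle paragraph that a class $2$ stem group with elementary abelian commutator subgroup is automatically a Camina group; the surrounding material is just bookkeeping on top of Macdonald's theorem, so I expect no serious obstacle beyond carefully checking that the two reductions preserve both $d(G)$ and $d(\gamma_2(G))$.
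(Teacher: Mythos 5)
Your proposal is correct and follows essentially the same route as the paper: quotient by $\Phi(\gamma_2(G))=\gamma_2(G)^p$, pass to a stem group $H$, observe that $H$ is then a genuine Camina group, and invoke Macdonald's result (Theorem \ref{iM81}). The only difference is that you spell out why $\Phi(H)=\gamma_2(H)$ (so that Camina-type coincides with Camina), a step the paper leaves implicit.
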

\begin{proof}
Let $G$ be a group as given in the statement. Consider the quotient group ${\bar G} = G/\Phi(\gamma_2(G))$. Notice that $d(G) = d({\bar G})$ and $d(\gamma_2(G)) = d(\gamma_2({\bar G}))$. Let $H$ be a stem group in the isoclinism family of ${\bar G}$. Then $\gamma_2(H) = \Z(H)$ is of exponent $p$ and  $H$ is a Camina-type group. Hence $H$ is  a Camina group. Now it follows from Theorem \ref{iM81} that $d(H)$ is even and $2d(\gamma_2(H)) \le d(H)$. Since $d({\bar G}) = d(H)$ (by Lemma \ref{prelemma1}) and $\gamma_2({\bar G}) \cong \gamma_2(H)$, assertion of the theorem follows. \hfill $\Box$

\end{proof}

Let $G$ be a finite nilpotent group of  class $2$. Let $\phi \in
\Aut_c(G)$. Then the map $g \mapsto g^{-1}\phi(g)$ is a homomorphism of
$G$ into $\gamma_2(G)$. This homomorphism sends $Z(G)$ to $1$. So it
induces a homomorphism $f_{\phi} \colon G/Z(G) \to \gamma_2(G)$, sending
$gZ(G)$ to $g^{-1}\phi(g)$, for any $g \in G$.  It is easily seen that
the map $\phi \mapsto f_{\phi}$ is a monomorphism of the group
$\Aut_c(G)$ into $\Hom(G/Z(G), \gamma_2(G))$.

Any $\phi \in \Aut_c(G)$ sends any $g \in G$ to some $\phi(g)
\in g^G$. Then $f_{\phi}(gZ(G)) = g^{-1}\phi(g)$ lies in $g^{-1}g^G =
[g,G]$.  Denote
\[  \{ \, f \in \Hom(G/Z(G), \gamma_2(G)) \mid f(gZ(G)) \in [g,G], \text{ for
all $g \in G$}\,\} \]
by $\Hom_c(G/Z(G), \gamma_2(G))$. Then $f_{\phi} \in \Hom_c(G/Z(G),
\gamma_2(G))$ for all $\phi \in \Aut_c(G)$. On the other hand, if $f \in
\Hom_c(G/Z(G), \gamma_2(G))$, then the map sending any $g \in G$ to
$gf(gZ(G))$ is an automorphism $\phi \in \Aut_c(G)$ such that $f_{\phi}
= f$. Thus we have

\begin{prop}\label{prop1}
 Let $G$ be a finite nilpotent group of class 2. Then the
above map $\phi \mapsto f_{\phi}$ is an isomorphism of the group
$\Aut_c(G)$ onto $\Hom_c(G/Z(G), \gamma_2(G))$.
\end{prop}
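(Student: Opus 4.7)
The plan is to establish four things: that $\phi \mapsto f_\phi$ is well defined (its image really lies in $\Hom_c$), that it is a group homomorphism, that it is injective, and that it is surjective onto $\Hom_c(G/Z(G), \gamma_2(G))$. Much of the structure is already sketched in the text preceding the statement, so I would focus on filling in the algebraic verifications, all of which rest on the crucial fact that in a class-$2$ group $\gamma_2(G) \le \Z(G)$.

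For well-definedness, I would first check that $g \mapsto g^{-1}\phi(g)$ is a homomorphism $G \to \gamma_2(G)$: expanding $(gh)^{-1}\phi(gh) = h^{-1}(g^{-1}\phi(g))\phi(h)$ and commuting the central element $g^{-1}\phi(g)$ past $h^{-1}$ gives $(g^{-1}\phi(g))(h^{-1}\phi(h))$. Since any $\phi \in \Aut_c(G)$ fixes every central element (its conjugacy class is a singleton), this homomorphism kills $\Z(G)$ and so factors through $G/\Z(G)$. The class-preserving property $\phi(g) = g^h$ then gives $g^{-1}\phi(g) = [g,h] \in [g,G]$, placing $f_\phi$ in $\Hom_c(G/\Z(G), \gamma_2(G))$. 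For the homomorphism property $f_{\phi\psi} = f_\phi f_\psi$ I would compute $(\phi\psi)(g) = \phi(g \cdot f_\psi(g\Z(G))) = \phi(g) \cdot f_\psi(g\Z(G))$, again using that $\phi$ fixes central elements. Injectivity is immediate: $f_\phi$ trivial forces $\phi(g) = g$ for all $g$, hence $\phi = \Id$.

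The substantive step is surjectivity. Given $f \in \Hom_c(G/\Z(G), \gamma_2(G))$, I would define $\phi_f(g) := g\, f(g\Z(G))$ and verify in order: (a) $\phi_f$ is a homomorphism, via the same centrality computation run in reverse; (b) $\phi_f \in \Aut_c(G)$, using the defining condition of $\Hom_c$ pointwise --- namely $f(g\Z(G)) = [g,h]$ for some $h \in G$ depending on $g$, so $\phi_f(g) = g[g,h] = g^h \in g^G$; (c) $\phi_f$ is bijective, where by finiteness it suffices to check injectivity, and if $\phi_f(g) = 1$ then $g = f(g\Z(G))^{-1} \in \gamma_2(G) \le \Z(G)$, which forces $f(g\Z(G)) = f(\Z(G)) = 1$ and hence $g = 1$; (d) $f_{\phi_f} = f$ by construction.

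The only real obstacle is step (b): one must exploit the pointwise hypothesis built into $\Hom_c$ (that $f(g\Z(G))$ lies in the set $[g,G]$ for each individual $g$) rather than any weaker global condition, in order to conclude that $\phi_f$ actually preserves every conjugacy class. The remaining verifications are direct consequences of $\gamma_2(G) \le \Z(G)$, which makes all the ``additive'' manipulations of $f_\phi$ collapse cleanly.
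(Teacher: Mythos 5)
Your proposal is correct and follows essentially the same route as the paper, which establishes the result through exactly this sequence of observations (well-definedness via centrality of $\gamma_2(G)$, the monomorphism property, the pointwise condition defining $\Hom_c$, and the inverse construction $g \mapsto g\,f(g\Z(G))$) in the discussion immediately preceding the statement. Your write-up simply supplies the routine verifications the paper leaves to the reader, including the one genuinely load-bearing point you correctly isolate: that surjectivity of $\phi_f$ onto conjugacy classes uses the pointwise membership $f(g\Z(G)) \in [g,G]$.
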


The following two lemmas are well known.
\begin{lemma}\label{lemma5}
Let $\A$, $\B$ and $\C$ be finite abelian groups. Then

{\rm (i)} $\Hom(\A \times \B, \C) \cong \Hom(\A,\C) \times \Hom(\B,\C)$;

{\rm (ii)}  $\Hom(\A,  \B \times \C) \cong \Hom(\A,\B) \times \Hom(\A,\C)$.
\end{lemma}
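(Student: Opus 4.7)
The plan is to construct explicit mutually inverse group homomorphisms in each case, exploiting the universal properties of the direct product of abelian groups (which, for abelian groups, is simultaneously a product and a coproduct). Since the paper explicitly flags this as well known, I do not anticipate any genuine obstacle; the entire argument is a routine verification, and the only mild care needed is in part (i), where one must use that $\C$ is abelian in order for the candidate inverse map to be additive.

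For part (i), let $\iota_\A \colon \A \to \A \times \B$ and $\iota_\B \colon \B \to \A \times \B$ denote the canonical inclusions $a \mapsto (a,0)$ and $b \mapsto (0,b)$. I would define
$$\Phi \colon \Hom(\A \times \B, \C) \to \Hom(\A,\C) \times \Hom(\B,\C), \qquad \Phi(f) = (f \circ \iota_\A,\; f \circ \iota_\B),$$
and, in the reverse direction,
$$\Psi \colon \Hom(\A,\C) \times \Hom(\B,\C) \to \Hom(\A \times \B, \C), \qquad \Psi(g,h)(a,b) = g(a) + h(b).$$
Because $\C$ is abelian, $\Psi(g,h)$ is a well-defined homomorphism. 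Both $\Phi$ and $\Psi$ are manifestly group homomorphisms (pointwise addition on each side), and the identities $\Phi \circ \Psi = \Id$ and $\Psi \circ \Phi = \Id$ follow from the fact that every $f \in \Hom(\A \times \B, \C)$ satisfies $f(a,b) = f(a,0) + f(0,b)$.

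For part (ii), let $\pi_\B \colon \B \times \C \to \B$ and $\pi_\C \colon \B \times \C \to \C$ be the canonical projections. I would define
$$\Phi' \colon \Hom(\A, \B \times \C) \to \Hom(\A,\B) \times \Hom(\A,\C), \qquad \Phi'(f) = (\pi_\B \circ f,\; \pi_\C \circ f),$$
with inverse $\Psi'(g,h)(a) = (g(a),h(a))$. The additivity of both maps and the two composition identities are immediate from the componentwise description of addition in $\B \times \C$. As noted, the only thing to keep track of is the asymmetry in how commutativity enters: in (i) it is needed to add two homomorphisms into $\C$, while in (ii) it is not needed at all (the second isomorphism is valid for arbitrary groups $\A$).
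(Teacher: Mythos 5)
Your proof is correct and complete; the paper itself states this lemma without proof, simply calling it ``well known,'' and your routine verification via the canonical inclusions and projections (with the observation that commutativity of $\C$ is what makes the sum of two homomorphisms into $\C$ a homomorphism) is exactly the standard argument one would supply.
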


\begin{lemma}\label{lemma6}
Let $\C_n$ and $\C_m$ be two cyclic groups of order $n$ and $m$
respectively. Then $\Hom(\C_n, \C_m) \cong \C_d$, where $d$ is the greatest
common divisor of $n$ and $m$, and $\C_d$ is the cyclic group of order $d$.
\end{lemma}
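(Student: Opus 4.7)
The plan is to fix cyclic generators $a$ of $\C_n$ and $b$ of $\C_m$ and exploit the fact that a homomorphism $\phi\colon \C_n \to \C_m$ is determined entirely by $\phi(a)$, with $\phi(a) = b^k$ giving a well-defined homomorphism precisely when the order of $b^k$ divides $n$.

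First, I would introduce the evaluation map $\Phi\colon \Hom(\C_n, \C_m) \to \C_m$ defined by $\Phi(\phi) = \phi(a)$. Under the natural abelian group structure on $\Hom(\C_n, \C_m)$ given by pointwise multiplication, the map $\Phi$ is a group homomorphism, and it is injective because $a$ generates $\C_n$. Thus $\Hom(\C_n, \C_m)$ is isomorphic to the image of $\Phi$, which is a subgroup of $\C_m$.

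Second, I would identify this image explicitly. Writing $\phi(a) = b^k$ for some $k$, the relation $a^n = 1$ forces $b^{kn} = 1$, i.e. $m \mid kn$; conversely, any such $k$ yields a genuine homomorphism by the universal property of cyclic groups. Setting $d = \gcd(n,m)$ and writing $n = dn'$, $m = dm'$ with $\gcd(n', m') = 1$, the condition $m \mid kn$ becomes $m' \mid kn'$, and coprimality gives $m' \mid k$, i.e. $(m/d) \mid k$. Hence the image of $\Phi$ is exactly $\langle b^{m/d} \rangle$, the unique cyclic subgroup of $\C_m$ of order $d$.

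Combining the two steps gives $\Hom(\C_n, \C_m) \cong \langle b^{m/d} \rangle \cong \C_d$, as required. The only delicate point is the divisibility equivalence in the second step, and that dissolves immediately upon factoring out the gcd; I anticipate no genuine obstacle.
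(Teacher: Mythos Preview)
Your argument is correct and is the standard textbook proof of this fact. The paper itself does not supply a proof: the lemma is introduced with the phrase ``The following two lemmas are well known'' and no argument is given. So there is nothing in the paper to compare against, and your write-up would serve perfectly well as the omitted justification.
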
 

\begin{prop}\label{prop1a}
Let $G$ be a finite $p$-group of class $2$ satisfying Hypothesis A. Then $G/\Z(G)$ is homocyclic.
\end{prop}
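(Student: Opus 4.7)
The plan is to convert Hypothesis A into a counting constraint on $\Hom(G/\Z(G), \gamma_2(G))$ via Proposition \ref{prop1}, and then to squeeze the invariant factors of $G/\Z(G)$ between two inequalities: a lower bound on the smallest factor coming from Hypothesis A, and an upper bound on the largest factor coming from the class-$2$ identity $[x^n, y] = [x,y]^n$. Set $\bar{G} := G/\Z(G)$, $A := \gamma_2(G)$, and $d := d(G)$. Since $G$ satisfies Hypothesis A, $G$ is Camina-type, so $\Z(G) \le \Phi(G)$ and $d(\bar{G}) = d$. Write $\bar{G} \cong \prod_{i=1}^{d} \mathbb{Z}/p^{e_i}\mathbb{Z}$ with $e_1 \ge \cdots \ge e_d \ge 1$ and $A \cong \prod_{j=1}^{m} \mathbb{Z}/p^{f_j}\mathbb{Z}$ with $f_1 \ge \cdots \ge f_m \ge 1$; the goal becomes $e_1 = \cdots = e_d$.

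From the chain $\Aut_c(G) \cong \Hom_c(\bar{G}, A) \le \Hom(\bar{G}, A)$ combined with Hypothesis A, I get $|\Hom(\bar{G}, A)| \ge |A|^{d}$. Lemmas \ref{lemma5} and \ref{lemma6} evaluate $|\Hom(\bar{G}, A)| = p^{\sum_{i,j} \min(e_i, f_j)}$, and since $|A|^d = p^{d \sum_j f_j}$, this reads $\sum_{i,j} \min(e_i, f_j) \ge d \sum_j f_j = \sum_{i,j} f_j$. Because $\min(e_i, f_j) \le f_j$ term by term, the inequality must hold with term-by-term equality, forcing $e_i \ge f_j$ for every pair $(i, j)$; in particular $e_d \ge f_1$.

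For the matching upper bound, commutators lie in $\Z(G)$ since $G$ has class $2$, so $[x^{p^{f_1}}, y] = [x, y]^{p^{f_1}} = 1$ for all $x, y \in G$ (as $p^{f_1} = \exp(A)$); hence $x^{p^{f_1}} \in \Z(G)$ for every $x$, which says $\exp(\bar{G})$ divides $p^{f_1}$, i.e., $e_1 \le f_1$. Combining: $e_1 \le f_1 \le e_d \le e_1$, so $e_1 = \cdots = e_d = f_1$ and $\bar{G}$ is homocyclic of exponent $p^{f_1}$. The one delicate point is extracting the sharp bound $e_d \ge f_1$ from Hypothesis A by reading Lemmas \ref{lemma5} and \ref{lemma6} tightly; once that pins down the smallest invariant factor from below, the class-$2$ commutator identity instantly supplies the matching upper bound on the largest invariant factor and the conclusion follows.
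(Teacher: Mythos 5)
Your proof is correct and follows essentially the same route as the paper: bound $|\Aut_c(G)|$ by $|\Hom(G/\Z(G),\gamma_2(G))|$ via Proposition \ref{prop1} and compute the latter from the cyclic decompositions using Lemmas \ref{lemma5} and \ref{lemma6}. The only difference is cosmetic: where the paper simply asserts that $\exp(G/\Z(G))=\exp(\gamma_2(G))$ and argues by contradiction from a too-small invariant factor, you extract the sharp inequalities $e_d\ge f_1$ and $e_1\le f_1$ explicitly (the latter from the class-$2$ identity $[x^n,y]=[x,y]^n$), which is a slightly cleaner write-up of the same argument.
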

\begin{proof}
Let $G/\Z(G) \cong \C_{p^{m_1}} \times \cdots \times \C_{p^{m_r}}$ for some integers   $m_1 \geq m_2 \geq \cdots \geq m_r \ge 1$. Let the exponent of $\gamma_2(G)$ be $p^e$. Then the  exponent of $G/\Z(G)$ is also $p^e$. Notice that $m_1 = m_2 = e$. We want to prove that  $m_i = e$ for each $i$ such that $1 \le i \le r$. If $r = 2$, we are done. So suppose that $r \geq 3$. Contrarily assume that  $m_k < e$ for some  $k$ such that  $3 \le k \le r$. Now 
\begin{eqnarray}\label{homeqn}
\Hom(G/\Z(G), \gamma_2(G)) &\cong& \Hom(\C_{p^{m_1}} \times \cdots \times \C_{p^{m_r}}, \gamma_2(G))\\
 &\cong&  \Hom(\C_{p^{m_1}}, \gamma_2(G)) \times \cdots \times  \Hom(\C_{p^{m_r}}, \gamma_2(G)).\nonumber
\end{eqnarray}

By considering cyclic factorization of $\gamma_2(G)$, it is easy to show that $|\Hom(\C_{p^{m_k}}, \gamma_2(G))| < |\gamma_2(G)|$. Now using \eqref{homeqn}, we have  
\[|\Aut_c(G)| = |Hom_c(G/Z(G), \gamma_2(G))| \le |Hom(G/Z(G), \gamma_2(G))| < |\gamma_2(G)|^r.\]
 Notice that $\Z(G) \le \Phi(G)$. Thus $r = d(G)$ and therefore $|\Aut_c(G)| < |\gamma_2(G)|^{d(G)}$. This contradicts the fact that $G$ satisfies Hypothesis A. Hence $m_i = e$ for each $i$ such that $1 \le i \le r$, which proves that $G/\Z(G)$ is homocyclic. \hfill $\Box$

\end{proof}

We can even prove the above proposition directly for Camina-type $p$-groups using different methods.

\begin{prop}\label{cl2prop}
Let $G$ be a Camina-type  finite $p$-group of class $2$.  Then $G/\Z(G)$ is homocyclic.
\end{prop}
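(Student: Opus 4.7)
The plan is to exploit the bilinearity of commutation in a class $2$ group. Writing $A := G/\Z(G)$ and $B := \gamma_2(G)$, commutation descends to an alternating $\mathbb Z$-bilinear map $c : A \times A \to B$. Since $\Phi(G) = \gamma_2(G) G^p$ and $\gamma_2(G) \le \Z(G)$, we have $\Phi(G)/\Z(G) = A^p$, so the Camina-type hypothesis translates into the following: for every $\bar a \in A \setminus A^p$, the slice map $\phi_a : A \to B$, $\bar g \mapsto [a,g]$, is a \emph{surjective} homomorphism.

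Next I would fix a cyclic decomposition $A = \langle \bar y_1 \rangle \times \cdots \times \langle \bar y_r \rangle$ with $|\bar y_i| = p^{m_i}$ and $m_1 \ge m_2 \ge \cdots \ge m_r \ge 1$. Let $p^e$ be the common value of $\exp A$ and $\exp B$; then $m_1 = e$, and the target is $m_i = e$ for every $i$. Each $\bar y_i$ is a member of an $\mathbb F_p$-basis of $A/A^p$, hence lies outside $A^p$, so the lift $y_i$ lies outside $\Phi(G)$ and the map $\phi_{y_i} : A \to B$ is surjective.

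The crux is a one-line exponent estimate for this image. Because $y_i^{p^{m_i}} \in \Z(G)$, bilinearity of $c$ yields $[y_i, y_j]^{p^{m_i}} = [y_i^{p^{m_i}}, y_j] = 1$ for every $j$, so the image of $\phi_{y_i}$, generated by the commutators $[y_i, y_j]$, has exponent dividing $p^{m_i}$. On the other hand this image equals $B$, whose exponent is $p^e$. Hence $m_i \ge e$, and combined with $m_i \le m_1 = e$ this forces $m_i = e$. Since $i$ was arbitrary, $A = G/\Z(G)$ is homocyclic.

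I do not expect a genuine obstacle. The entire argument is a direct consequence of the bilinearity of commutation in class $2$ groups together with the defining Camina-type condition; the only piece of bookkeeping is the identification $\Phi(G)/\Z(G) = A^p$, which allows the passage between the lift-level statement ``$y_i \notin \Phi(G)$'', needed to apply the Camina-type hypothesis, and the combinatorial fact that a cyclic-factor generator $\bar y_i$ is not a $p$-th power in $A$.
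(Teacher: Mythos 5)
Your proof is correct and is essentially the paper's own argument: both take a cyclic decomposition of $G/\Z(G)$, observe that each factor generator lies outside $\Phi(G)$ so that the Camina-type condition makes the slice map onto $\gamma_2(G)$, and then use the class-$2$ identity $[y_i,g]^{p^{m_i}}=[y_i^{p^{m_i}},g]=1$ to force $p^{m_i}\ge \exp(\gamma_2(G))$. The only (harmless) difference is that you verify directly, via $\Phi(G)/\Z(G)=A^p$, that the cyclic-factor generators lift to elements outside $\Phi(G)$, where the paper instead cites an external lemma producing a compatible minimal generating set.
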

\begin{proof}
Since $G$ is a  Camina-type  finite $p$-group of class $2$,  $\Z(G) \le \Phi(G)$. Let $x_1, \ldots, x_d$ be a minimal generating set for $G$ such that $G/\Z(G) = \gen{x_1\Z(G)} \times \cdots \times \gen{x_d\Z(G)}$ be a cyclic (direct)  decomposition of $G/\Z(G)$. Notice that such a minimal generating set always exists for any finite $p$-group $K$ of class $2$ with $\Z(K) \le \Phi(K)$ (\cite[Lemma 3.5(1)]{mY13}). Let the exponent of $\gamma_2(G)$ be $p^e$ for some positive integer $e$ and  the order of  $x_i$ modulo $\Z(G)$ be $p^{e_i}$ for $1 \le i \le d$. Since $p^{e_i} \le p^e = exp(G/\Z(G))$, to complete the proof, it is sufficient to show that  $p^{e_i} \ge p^e$ for each $i$. Let $u \in \gamma_2(G)$ be such that order of $u$ is $p^e$. Since $G$ is a Camina-type group, $[x_i, G] = \gamma_2(G)$ for $1 \le i \le d$. Thus, for each $i$, there exists some $g_i \in G$ such that $u = [x_i, g_i]$. Now $u^{p^{e_i}} = [x_i, g_i]^{p^{e_i}} = [x_i^{p^{e_i}}, g_i] = 1$, since $x_i^{p^{e_i}} \in \Z(G)$. This shows that $p^e \le p^{e_i}$ for each $i$ such that $1 \le i \le d$ and the proof of the lemma is complete.       \hfill $\Box$
  
\end{proof}

Next two results are about the relationship between class-preserving and central automorphisms.
\begin{lemma}\label{lemma7}
Let $G$ be a  Camina-type  finite $p$-group of class $2$,  and  $H$ be a stem group in the isoclinism family of $G$. Then $\Autcent(H) = \Aut_c(H)$.
\end{lemma}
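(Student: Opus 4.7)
The plan is to prove both inclusions in $\Autcent(H) = \Aut_c(H)$, exploiting the homocyclic structure of $H/\Z(H)$ (Proposition \ref{cl2prop}) together with the Camina-type hypothesis. First I set up the structure: by Lemma \ref{prelemma1}, the stem group $H$ inherits the Camina-type property from $G$; being stem, $\Z(H) \le \gamma_2(H)$, and class $2$ gives the reverse inclusion, so $\Z(H) = \gamma_2(H)$, abbreviated by $Z$. Set $p^e := \exp(Z)$. The identity $[x^{p^e}, y] = [x, y]^{p^e} = 1$ bounds $\exp(H/Z) \le p^e$, and Proposition \ref{cl2prop} forces $H/Z$ to be homocyclic of exponent $p^e$ and rank $d(H)$. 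Moreover $H$ is purely non-abelian: a non-trivial abelian direct summand $A$ in a decomposition $H = A \times B$ would satisfy $A \le \Z(H) = \gamma_2(H) = \gamma_2(B) \le B$, contradicting $A \cap B = 1$.

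The inclusion $\Aut_c(H) \subseteq \Autcent(H)$ is immediate in class $2$: for $\alpha \in \Aut_c(H)$ and any $g \in H$, $g^{-1}\alpha(g)$ is a commutator $[g, h]$ and hence lies in $\gamma_2(H) = Z$. For the reverse inclusion I pass to homomorphisms. Proposition \ref{prop1} identifies $\Aut_c(H)$ with $\Hom_c(H/Z, Z)$ via $\alpha \mapsto f_\alpha$, and Theorem \ref{AY65} (applicable since $H$ is purely non-abelian) identifies $\Autcent(H)$ with $\Hom(H/\gamma_2(H), Z) = \Hom(H/Z, Z)$ via the same map. The equality $\Autcent(H) = \Aut_c(H)$ therefore reduces to $\Hom(H/Z, Z) \subseteq \Hom_c(H/Z, Z)$; equivalently, $f(gZ) \in [g, H]$ for every $f \in \Hom(H/Z, Z)$ and every $g \in H$.

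To verify this last condition, fix $f$ and $g$. If $g \in Z$ the claim is trivial. Otherwise, by homocyclicity of $H/Z$, let $p^k$ be the largest $p$-power with $gZ \in (H/Z)^{p^k}$, and write $gZ = (g'Z)^{p^k}$ with $g'Z \notin (H/Z)^p = \Phi(H)/Z$; hence $g' \in H - \Phi(H)$. The Camina-type hypothesis yields $[g', H] = \gamma_2(H) = Z$, and class $2$ (writing $g = (g')^{p^k}z$ with $z \in Z$) gives
\[ [g, h] = [(g')^{p^k}, h] = [g', h]^{p^k} \quad \text{for every } h \in H, \]
so $[g, H] = Z^{p^k}$. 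Since $f$ is a homomorphism, $f(gZ) = f(g'Z)^{p^k} \in Z^{p^k} = [g, H]$, as required.

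The substantive step is the identification $[g, H] = Z^{p^k}$, which combines homocyclicity of $H/Z$ (to produce a representative $g' \notin \Phi(H)$ with $gZ = (g'Z)^{p^k}$) with Camina-typicality (to promote $[g', H]$ to all of $Z$); the remaining work is purely formal, so I do not anticipate a serious obstacle.
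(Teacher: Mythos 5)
Your proof is correct and follows essentially the same route as the paper's: both reduce the reverse inclusion to showing $f(gZ)\in[g,H]$ for every $f\in\Hom(H/\Z(H),\Z(H))$ via Theorem \ref{AY65} and Proposition \ref{prop1}, handle $g\notin\Phi(H)$ directly from the Camina-type condition, and handle $g\in\Phi(H)$ by writing $g$ modulo $\gamma_2(H)$ as a power of an element outside $\Phi(H)$ so that $[g,H]=[g',H]^{p^k}$ and $f(gZ)=f(g'Z)^{p^k}$. The only cosmetic difference is that you pin down the exponent as the exact power $p^k$ using homocyclicity, whereas the paper uses an unspecified integer $t$; the argument is otherwise identical.
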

\begin{proof}
Let $G$ and $H$ be the groups as in the statement. Since $G$ is a Camina-type $p$-group, Lemma \ref{prelemma1} tells that $H$ is also a Camina-type $p$-group.  Since $\Z(H) = \gamma_2(H)$,  $H$ is purely non-abelian. It now follows from Theorem \ref{AY65} that there is a bijection between $\Autcent(H)$ and $\Hom(H/\gamma_2(H), \Z(H))$.  Since $\Aut_c(H) \le \Autcent(H)$ for any $p$-group $H$ of nilpotency class $2$,  we only need to show that $\Autcent(H) \le \Aut_c(H)$  to complete the proof.  Let $\alpha \in \Autcent(H)$ be an arbitrary central automorphism of $H$. Then 
$f_{\alpha} \in \Hom(H/\gamma_2(H), \Z(H))$. Since $H$ is a Camina-type group, for any $x \in H - \Phi(H)$ we have   $[x, H] = \gamma_2(H) = \Z(H)$. Thus $f_{\alpha}(x\gamma_2(H))\in [x, H]$. Let $x\gamma_2(H) \in \Phi(H)/\gamma_2(H)$. Then there exists an element $y \in  H - \Phi(H)$ such that $x = y^t$ (modulo $\gamma_2(H)$) for some integer $t$. Notice that $[x, H] = [y^t, H] = [y, H]^t$, since $\gamma_2(H) = \Z(H)$. Thus 
\[f_{\alpha}(x\gamma_2(H)) = f_{\alpha}(y^t\gamma_2(H)) = f_{\alpha}(y\gamma_2(H))^t \in [y, H]^t = [x, H].\]
 Hence $f_{\alpha} \in \Hom_c(H/\Z(H), \gamma_2(H))$ and therefore $\alpha \in \Aut_c(H)$ (by Proposition \ref{prop1}). This proves that $\Autcent(H) \le \Aut_c(H)$.  \hfill $\Box$

\end{proof}

In \cite{mY13} we proved that if $\Autcent(G) = \Aut_c(G)$ for any finite $p$-group $G$, then $\Z(G) = \gamma_2(G)$. 
Using this and the above lemma, we here get

\begin{prop}
Let $G$ be a  Camina-type  finite $p$-group of class $2$. Then $\Autcent(G) = \Aut_c(G)$ if and only if $\Z(G) = \gamma_2(G)$.
\end{prop}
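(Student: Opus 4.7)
The plan is to prove both implications by directly invoking the results just established. The forward direction is essentially free: the statement cited from \cite{mY13} says that for \emph{any} finite $p$-group $G$, the equality $\Autcent(G) = \Aut_c(G)$ forces $\Z(G) = \gamma_2(G)$. So if this equality holds for our Camina-type $G$ of class $2$, then $\Z(G) = \gamma_2(G)$ with no further work.

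For the converse, the key observation is that the hypothesis $\Z(G) = \gamma_2(G)$ makes $G$ itself a stem group in its own isoclinism family, since the defining condition for a stem group is $\Z(H) \le \gamma_2(H)$. Hence I can apply Lemma \ref{lemma7} with the stem group $H$ taken to be $G$ itself, and conclude $\Autcent(G) = \Aut_c(G)$ immediately.

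So the entire argument reduces to recognizing that $G$ plays both the role of the ambient Camina-type group and the role of the stem group in Lemma \ref{lemma7}. There is no real obstacle; the content of the result has already been absorbed into \cite{mY13} (for the forward direction) and into Lemma \ref{lemma7} (for the converse). The proposition is essentially a convenient packaging of these two facts, specialized to the Camina-type setting where one knows a stem group always satisfies $\Autcent = \Aut_c$, and where the hypothesis $\Z(G) = \gamma_2(G)$ is exactly what is needed for $G$ to coincide with its own stem group.
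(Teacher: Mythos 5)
Your proposal is correct and matches the paper's intended argument exactly: the forward implication is the quoted result from \cite{mY13}, and the converse follows by observing that $\Z(G) = \gamma_2(G)$ (equivalently, $\Z(G) \le \gamma_2(G)$, since class $2$ gives the reverse inclusion for free) makes $G$ a stem group in its own isoclinism family, so Lemma \ref{lemma7} applies with $H = G$. Nothing further is needed.
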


As we noticed in the introduction that any finite $p$-group satisfying Hypothesis A is a Camina-type group. We now prove the converse of this statement in the case of nilpotency class $2$.
\begin{prop}\label{prop1b}
Let $G$ be a  Camina-type  finite $p$-group of nilpotency class $2$. Then $G$ satisfies Hypothesis A. 
\end{prop}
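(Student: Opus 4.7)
The plan is to reduce to a stem group $H$ in the isoclinism family of $G$, where the extra property $\Z(H)=\gamma_2(H)$ allows a direct count of $|\Aut_c(H)|$ via Adney-Yen combined with Lemma \ref{lemma7}.

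First I would invoke Lemma \ref{prelemma1} and Theorem \ref{mY08} to pass from $G$ to a stem group $H$ isoclinic to $G$. This yields $d(G)=d(H)$, $\gamma_2(G)\cong\gamma_2(H)$, $|\Aut_c(G)|=|\Aut_c(H)|$, and that $H$ is itself a Camina-type $p$-group of class $2$. Proving Hypothesis A for $G$ is thereby reduced to showing $|\Aut_c(H)|=|\gamma_2(H)|^{d(H)}$.

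Because $H$ is a stem group of class $2$, $\Z(H)=\gamma_2(H)$, so $H$ is purely non-abelian. Theorem \ref{AY65} then gives $|\Autcent(H)|=|\Hom(H/\gamma_2(H),\Z(H))|$, and Lemma \ref{lemma7} identifies $\Autcent(H)$ with $\Aut_c(H)$. By Proposition \ref{cl2prop}, $H/\Z(H)$ is homocyclic; coupled with the standard class-$2$ identity $\exp(H/\Z(H))=\exp\gamma_2(H)=:p^e$ (a consequence of $[g,h]^n=[g^n,h]$), this forces $H/\gamma_2(H)\cong(\mathbb{Z}/p^e\mathbb{Z})^{d(H)}$ while $\gamma_2(H)$ has exponent exactly $p^e$. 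Lemmas \ref{lemma5} and \ref{lemma6} then evaluate $|\Hom(H/\gamma_2(H),\gamma_2(H))|=|\gamma_2(H)|^{d(H)}$. Chaining these equalities gives
\begin{equation*}
|\Aut_c(G)|=|\Aut_c(H)|=|\Autcent(H)|=|\gamma_2(H)|^{d(H)}=|\gamma_2(G)|^{d(G)}.
\end{equation*}

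The only delicate point is the exponent equality $\exp(H/\Z(H))=\exp\gamma_2(H)$, which is what allows each of the $d(H)$ cyclic factors of $\Hom(H/\gamma_2(H),\gamma_2(H))$ to achieve its maximum size $|\gamma_2(H)|$; without it, homocyclicity of $H/\Z(H)$ alone does not suffice to reach $|\gamma_2(H)|^{d(H)}$. Everything else is a clean assembly of results already proved in the preceding sections, so no new technical obstacle arises.
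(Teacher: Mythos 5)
Your proposal is correct and follows essentially the same route as the paper's own proof: pass to a stem group $H$ via Lemma \ref{prelemma1} and Theorem \ref{mY08}, use $\Z(H)=\gamma_2(H)$ together with Theorem \ref{AY65}, Lemma \ref{lemma7} and Proposition \ref{cl2prop} to compute $|\Aut_c(H)|=|\Hom(H/\Z(H),\gamma_2(H))|=|\gamma_2(H)|^{d(H)}$, and then transfer back to $G$. Your explicit justification of the exponent equality $\exp(H/\Z(H))=\exp\gamma_2(H)$ is exactly the point the paper leaves implicit in its final chain of equalities, so nothing is missing.
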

\begin{proof}
Suppose that  $H$ is a stem group in the isoclinism family of $G$. Then, $H$ being Camina-type (by Lemma \ref{prelemma}), by Proposition \ref{cl2prop} and Lemma \ref{lemma7}, we have $H/\Z(H)$ is homocyclic and $\Autcent(H) = \Aut_c(H)$ respectively. Hence 
\[|\Aut_c(H)| = |\Autcent(H)| = \Hom(H/\gamma_2(H), \Z(H)) = \Hom(H/\Z(H), \gamma_2(H)) = |\gamma_2(H)|^{d(H)},\] 
since $\gamma_2(H) = \Z(H)$. 
This proves that $H$ satisfies Hypothesis A. That $G$ satisfies Hypothesis A, now can be shown on the lines of the proof of Lemma \ref{prelemma}, since $\Z(G) \le \Phi(G)$. This completes the proof of the proposition.
  \hfill $\Box$

\end{proof}

Now onwards, in this section, we mainly concentrate on characterizing finite $p$-groups $G$ of class $2$ satisfying Hypothesis A and such that $\Aut_c(G) = \Inn(G)$. We first prove 

\begin{thm}\label{thmcl2a}
Let $G$ be a finite $p$-group of class $2$ satisfying Hypothesis A. Then  $\Aut_c(G) = \Inn(G)$ if and only if $\gamma_2(G)$ is cyclic.
\end{thm}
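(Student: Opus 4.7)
My plan is to reduce to a stem group $H$ in the isoclinism family of $G$ and turn both directions into an order-counting argument. By Lemma~\ref{prelemma}, $H$ again satisfies Hypothesis A with $d(H) = d(G)$ and $\gamma_2(H) \cong \gamma_2(G)$; by Theorem~\ref{mY08} we have $|\Aut_c(H)| = |\Aut_c(G)|$; and by the definition of isoclinism $H/\Z(H) \cong G/\Z(G)$, so $|\Inn(H)| = |\Inn(G)|$. Hence $\Aut_c(G) = \Inn(G)$ if and only if $\Aut_c(H) = \Inn(H)$, and $\gamma_2(G)$ is cyclic if and only if $\gamma_2(H)$ is cyclic. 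Thus it suffices to work inside $H$, where the additional feature $\Z(H) = \gamma_2(H)$ is available.

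The central calculation proceeds as follows. By Proposition~\ref{prop1a} (or equivalently Proposition~\ref{cl2prop}), $H/\Z(H)$ is homocyclic, say of rank $d = d(H)$ and exponent $p^e$. Since $H$ has class $2$, the commutator map is bilinear modulo $\Z(H)$, so $\exp(\gamma_2(H))$ divides $\exp(H/\Z(H)) = p^e$; conversely, the argument used in Proposition~\ref{cl2prop} shows that $\exp(\gamma_2(H)) \geq p^{e_i}$ for every generator, giving $\exp(\gamma_2(H)) = p^e$. Therefore
\[
|\Inn(H)| = |H/\Z(H)| = p^{ed}, \qquad |\Aut_c(H)| = |\gamma_2(H)|^d,
\]
while $|\gamma_2(H)| \geq p^e$ with equality precisely when $\gamma_2(H)$ is cyclic.

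Both implications now read off from these equalities. For the forward direction, $\Aut_c(H) = \Inn(H)$ gives $|\gamma_2(H)|^d = p^{ed}$, forcing $|\gamma_2(H)| = p^e = \exp(\gamma_2(H))$, so $\gamma_2(H)$ (hence $\gamma_2(G)$) is cyclic. For the converse, if $\gamma_2(H)$ is cyclic then $|\gamma_2(H)| = p^e$, whence $|\Aut_c(H)| = p^{ed} = |\Inn(H)|$; combined with $\Inn(H) \leq \Aut_c(H)$, this yields $\Aut_c(H) = \Inn(H)$, and pulling back through the isoclinism gives $\Aut_c(G) = \Inn(G)$.

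The only non-routine step is justifying $\exp(\gamma_2(H)) = \exp(H/\Z(H))$ in the stem case; the remainder is bookkeeping. Since the required equality is precisely what is established in the proof of Proposition~\ref{cl2prop}, there is no substantive obstacle beyond invoking the results already in the paper.
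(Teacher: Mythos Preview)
Your argument is correct. The overall architecture matches the paper's: pass to a stem group $H$, use that $H/\Z(H)$ is homocyclic of exponent $p^e = \exp(\gamma_2(H))$, and compare $|\Aut_c(H)|$ with $|\Inn(H)| = p^{ed}$. The one genuine difference is in how you compute $|\Aut_c(H)|$. The paper invokes Lemma~\ref{lemma7} to get $\Aut_c(H) = \Autcent(H)$, then applies the Adney--Yen bijection (Theorem~\ref{AY65}) to write $|\Autcent(H)| = |\Hom(H/\Z(H), \gamma_2(H))|$ and splits off a factor of $|H/\Z(H)|$ to reach a contradiction when $\gamma_2(H)$ is not cyclic. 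You instead read $|\Aut_c(H)| = |\gamma_2(H)|^{d}$ directly from Hypothesis~A (via Lemma~\ref{prelemma}), which sidesteps central automorphisms and Adney--Yen entirely and turns both directions into the single comparison $|\gamma_2(H)|^d$ versus $(p^e)^d$. Your route is shorter and uses strictly fewer ingredients; the paper's route has the mild advantage that, via Lemma~\ref{lemma7}, it would still go through under the weaker hypothesis that $G$ is merely Camina-type rather than satisfying Hypothesis~A (of course these coincide in class~$2$ by Proposition~\ref{prop1b}). For the ``if'' direction, the paper simply cites \cite[Corollary~3.6]{mY08}, whereas your order count handles it without an external reference.
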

\begin{proof}
Since $\Aut_c(G) = \Inn(G)$ for any finite $p$-group $G$ of nilpotency class $2$ with $\gamma_2(G)$ cyclic (\cite[Corollary 3.6]{mY08}), if part of the assertion follows. Now suppose that  $\Aut_c(G) = \Inn(G)$. Let $H$ be a stem group in the isoclinism family of $G$. Since $\gamma_2(G) \cong \gamma_2(H)$, it is sufficient to prove that $\gamma_2(H)$ is cyclic.  Notice that $\Aut_c(H) = \Inn(H)$. Also notice that  $H$ is a Camina-type $p$-group. Hence by Proposition \ref{cl2prop} and Lemma \ref{lemma7}, we have $H/\Z(H)$ is homocyclic of exponent equal the exponent of $\gamma_2(H)$ and $\Autcent(H) = \Aut_c(H)$ respectively.

 Suppose contrarily that $\gamma_2(G) \cong \gamma_2(H)$ is not cyclic. Then $\gamma_2(H) \cong \C_{p^e} \times K$, where $p^e$ is the exponent of $\gamma_2(H)$ and $K$ is some non-trivial subgroup of $H$ contained in $\gamma_2(H)$. Since $\gamma_2(H) = \Z(H)$,  by Theorem \ref{AY65} we have
\begin{eqnarray*}
|\Autcent(H)| &=& |\Hom(H/\Z(H), \gamma_2(H))| = |\Hom(H/\Z(H), \C_{p^e}) \times \Hom(H/\Z(H), K)|\\
& =& |H/\Z(H)| |\Hom(H/\Z(H), K)|.
\end{eqnarray*}
This gives 
\[|\Inn(H)| = |\Aut_c(H)| = |H/\Z(H)| |\Hom(H/\Z(H), K)| > |\Inn(H)|,\]
which is not possible.  Hence $\gamma_2(H)$ is cyclic and the proof of the theorem is complete. \hfill $\Box$

\end{proof}

Let $G$ be a $2$-generator finite $p$-group of class $2$. Then $\gamma_2(G)$ is cyclic and therefore $|\Aut_c(G)|$ $ = 
|\Inn(G)| = |\gamma_2(G)|^2$.  Thus $G$ satisfies Hypothesis A. Let $q = p^e$ be the order of $\gamma_2(G)$. Then $|G/\Z(G)| = q^2$. 
If we assume that $\gamma_2(G) = \Z(G)$, then $|G| = q^3$. More generally, for any positive integer $m$, let $G_1, G_2, \ldots, G_m$ be  $2$-generator finite $p$-groups such that $\gamma_2(G_i) = \Z(G_i) \cong X$ (say) is cyclic of order  $q$ for $1 \le i \le m$. Consider the central product 
\begin{equation}\label{ygroup}
Y = G_1 *_X G_2 *_X \cdots *_X G_m
\end{equation}
 of $G_1, G_2, \ldots, G_m$ amalgamated at $X$ (isomorphic to cyclic commutator subgroups $\gamma_2(G_i)$, $1 \le i \le m$).  Then $|Y| = q^{2m+1}$ and $|Y/\Z(Y)| = q^{2m} = |\gamma_2(Y)|^{d(Y)}$ as  $d(Y) = 2m$. Since $\gamma_2(Y)$ is cyclic, $\Aut_c(Y) = \Inn(Y)$ and therefore $Y$ satisfies Hypothesis A. In the following result, we show that these are the only finite $p$-groups (upto isoclinism) of class $2$ with cyclic commutator subgroup and satisfying Hypothesis A.

\begin{thm}\label{thmcl2b}
Let $G$ be a finite $p$-group of class $2$ satisfying Hypothesis A and with cyclic commutator subgroup. Then $G$  is isoclinic to the group $Y$, defined in \eqref{ygroup}, for a suitable positive integer $m$.
\end{thm}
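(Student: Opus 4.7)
The plan is to pass from $G$ to a stem group $H$ in its isoclinism family and then invoke the Brady--Bryce--Cossey structure theorem (Theorem \ref{BBC}). Since isoclinism preserves Hypothesis~A (Lemma \ref{prelemma}) and the Camina-type property (Lemma \ref{prelemma1}), $H$ is a Camina-type $p$-group of class $2$ satisfying Hypothesis~A with $d(H) = d(G)$. Being a stem group of class $2$, $\Z(H) = \gamma_2(H)$, and this is cyclic of order $q := |\gamma_2(G)|$. By Proposition \ref{cl2prop}, $H/\Z(H)$ is homocyclic of exponent $q$, and by Theorem~B(i), $d(H) =: 2m$ is even. Hence $|H| = q^{2m+1}$.

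Next, since $\Z(H)$ is cyclic, Theorem \ref{BBC} writes $H$ as a central product of $2$-generator subgroups with cyclic center, possibly together with an additional cyclic factor. Any generator of the extra cyclic factor lies in $\Z(H) = \gamma_2(H)$, and the commutator subgroup of a central product is the product of the commutator subgroups of its factors, so $\gamma_2(H)$ already lies in the product of the $2$-generator factors; the stray cyclic factor is absorbed. Write $H = H_1 * H_2 * \cdots * H_k$ with each $H_i$ a $2$-generator subgroup of cyclic center.

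The key step is to show $\Z(H_i) = \gamma_2(H_i) = \gamma_2(H)$ for every $i$. Take $h \in H_i$ in a minimal generating set of $H_i$; then $h \notin \Phi(H)$, so the Camina-type property gives $[h,H] = \gamma_2(H)$. But elements of $H_j$ with $j \ne i$ commute with $h$, so $[h,H] = [h,H_i] \subseteq \gamma_2(H_i)$, forcing $\gamma_2(H_i) = \gamma_2(H)$. Likewise $\Z(H_i) \le \Z(H) = \gamma_2(H) = \gamma_2(H_i) \le \Z(H_i)$, giving $\Z(H_i) = \gamma_2(H_i) =: X$, a common cyclic subgroup of order $q$. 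Hence $H = H_1 *_X H_2 *_X \cdots *_X H_k$ is already of the shape \eqref{ygroup}. Comparing orders, $q^{2k+1} = q^{2m+1}$ forces $k = m$, so $H \cong Y$ and therefore $G$ is isoclinic to $Y$.

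The delicate point is the third paragraph: Theorem \ref{BBC} alone provides only cyclic centers and permits an extra cyclic summand, so the Camina-type hypothesis must be invoked decisively---via $[h,H] = \gamma_2(H)$ for $h \notin \Phi(H)$ together with the fact that in a central product each element commutes with every factor other than its own---to promote each $\Z(H_i)$ to the full $\gamma_2(H)$ and to force the amalgamation to occur over that single subgroup.
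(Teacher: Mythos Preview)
Your proof is correct and follows the paper's approach: pass to a stem group $H$, apply the Brady--Bryce--Cossey decomposition, and verify that each factor $H_i$ satisfies $\gamma_2(H_i) = \Z(H_i)$ of order $q$. You in fact supply more detail than the paper's ``it is easy to see''; the one step you leave implicit---that a minimal generator $h$ of $H_i$ lies outside $\Phi(H)$ and that $|H_i| = q^3$---follows from the decomposition $H/\Z(H) \cong \bigoplus_i H_i/\Z(H_i)$ together with the homocyclic structure of $H/\Z(H)$ already established.
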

\begin{proof}
Let $G$ be a group as in the statement and $H$ be a stem group in the isoclinism family of $G$. Then $\gamma_2(H) = \Z(H)$ is cyclic of order $q = p^e$ (say) and, by Proposition \ref{prop1a},  $H/\Z(H)$ is homocyclic of exponent $q$ and is of order $q^{2m}$ for some positive integer $m$, since $d(H)$ is even (by Theorem \ref{thmcl2}).   Since $\Z(H) = \gamma_2(H)$ is cyclic, it follows from Theorem \ref{BBC} that $H$ is a central product of $2$-generator groups $H_1, H_2, \ldots, H_m$. Now it is easy to see that $\gamma_2(H_i) = \Z(H_i)$ and $|\gamma_2(H_i)| = q$ for $1 \le i \le m$. This completes the proof of the theorem. \hfill $\Box$

\end{proof}

We are now ready to prove Theorem B.
\vspace{.1in}

\noindent{\it Proof of Theorem B.} Let $G$ be a Camina-type finite $p$-group of nilpotency class $2$. Then by Proposition \ref{prop1b} $G$ satisfies Hypothesis A. That $d(G)$ is even and $d(G) \ge 2d(\gamma_2(G))$, now follows from Theorem  \ref{thmcl2}.  Proposition \ref{cl2prop} tells that $G/\Z(G)$ is homocyclic. The last assertion is Theorem \ref{thmcl2b}. \hfill $\Box$

\section{Camina-type groups with cyclic commutator subgroup}

In this section  we show that  all Camina-type finite $p$-groups of nilpotency class $\ge 3$ with cyclic commutator subgroup are two generator groups. We start with the following interesting result.

\begin{lemma}\label{lemma2a1}
Let $G$ be a  Camina-type finite $p$-group of class $3$ such that $|\gamma_3(G)| = p$. Then the following statements
hold true in $G$:
\begin{subequations} 
\begin{align}
 &\text{$(\gamma_2(G))^p \le \Z(G)$.} \label{cond1} \\
  &\text{$\Z_2(G) = \Phi(G)$,} \label{cond2}
\end{align}
\end{subequations}
where $\Z_2(G)$ denotes the second center of $G$.
\end{lemma}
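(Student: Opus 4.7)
The plan is to handle the two statements in order, both resting on the elementary facts that $\gamma_3(G) \le \Z(G)$ (being the last nontrivial term of the lower central series) and that $\gamma_2(G)$ is abelian (because $[\gamma_2(G), \gamma_2(G)] \le \gamma_4(G) = 1$).

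For \eqref{cond1}, I would pick $c \in \gamma_2(G)$ and $g \in G$ arbitrarily and show $c^p$ commutes with $g$. Since $[c, g] \in \gamma_3(G)$ is central and has order dividing $p$, the identity $g^{-1} c g = c[c,g]$ together with the centrality of $[c, g]$ yields $g^{-1} c^p g = (c[c,g])^p = c^p [c,g]^p = c^p$. Hence $c^p \in \Z(G)$, and letting $c$ range over $\gamma_2(G)$ gives $(\gamma_2(G))^p \le \Z(G)$.

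For \eqref{cond2}, I would prove the two inclusions separately. The inclusion $\Z_2(G) \le \Phi(G)$ is a quick consequence of the Camina-type hypothesis: if $x \in G - \Phi(G)$, then $[x, G] = \gamma_2(G)$, and because $G$ has class exactly $3$ we have $\gamma_2(G) \not\le \Z(G)$, so $x \notin \Z_2(G)$. The reverse inclusion $\Phi(G) \le \Z_2(G)$ reduces, via $\Phi(G) = \gamma_2(G) G^p$, to checking $[\gamma_2(G), G] \le \Z(G)$ (immediate, as $[\gamma_2(G), G] = \gamma_3(G)$) and $[g^p, h] \in \Z(G)$ for all $g, h \in G$. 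For the latter, a standard commutator-collection argument in the class-$3$ group $G$ yields
$$[g^p, h] = [g, h]^p \cdot z$$
for some $z \in \gamma_3(G)$. Part \eqref{cond1} places $[g, h]^p$ in $\Z(G)$, while $z \in \gamma_3(G) \le \Z(G)$, so $[g^p, h] \in \Z(G)$. Thus $[G^p, G] \le \Z(G)$, and combining with $[\gamma_2(G), G] = \gamma_3(G) \le \Z(G)$ gives $[\Phi(G), G] \le \Z(G)$, i.e., $\Phi(G) \le \Z_2(G)$.

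The only mildly delicate point is the commutator identity $[g^p, h] \equiv [g, h]^p \pmod{\gamma_3(G)}$, but since $\gamma_2(G)$ is abelian and $[\gamma_2(G), G] \le \Z(G)$, the correction term is forced into $\gamma_3(G)$ regardless of the prime $p$; in particular no special treatment of $p = 2$ is needed. No part of the argument is genuinely hard; the key observation is that the Camina-type hypothesis pins down the elements of $G - \Phi(G)$ sharply enough that $\Z_2$ cannot escape $\Phi$.
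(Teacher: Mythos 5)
Your proof is correct and follows essentially the same route as the paper: part \eqref{cond1} via $[u^p,x]=[u,x]^p=1$ using the centrality and order of $\gamma_3(G)$, the inclusion $\Z_2(G)\le\Phi(G)$ from the Camina-type condition together with the class being exactly $3$, and the reverse inclusion from $[y^p,v]\equiv[y,v]^p \pmod{\Z(G)}$ combined with part \eqref{cond1}. The only cosmetic difference is that you establish $\Phi(G)\le\Z_2(G)$ directly on the generators coming from $\Phi(G)=\gamma_2(G)G^p$, whereas the paper argues by contradiction with a chosen element $x=y^p\in\Phi(G)-\Z_2(G)$; both hinge on the same identity.
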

\begin{proof} 
Suppose that $u \in \gamma_2(G)$ and $x \in G$.  Then  $[u,x] \in \gamma_3(G) \le \Z(G)$, and therefore $[u^p, x] = [u, x]^p = 1$ as $|\gamma_3(G)| = p$. Since this holds true for all $u \in \gamma_2(G)$ and $x \in G$, it follows that $(\gamma_2(G))^p \le \Z(G)$. This proves \eqref{cond1}.

Now we prove \eqref{cond2}. First we show that $\Z_2(G) \le \Phi(G)$. 
Contrarily
assume that $\Z_2(G) \not\le \Phi(G)$. Let $x \in \Z_2(G) - \Phi(G)$.  Since $G$ is Camina-type, 
$[x,G] = \gamma_2(G)$. Since $x \in \Z_2(G)$, it follows that
$\gamma_2(G) = [x,G] \le \Z(G)$. This contradicts the fact that the nilpotency
class of $G$ is $3$. Hence $\Z_2(G) \le \Phi(G)$. Next suppose that $\Z_2(G) <
\Phi(G)$. Since $\gamma_2(G) \le \Z_2(G)$, we can always find an element 
$x \in \Phi(G)-\Z_2(G)$ and an element $y \in G-\Z_2(G)$ such that $x = y^p$. 
For,  if $y^p \in \Z_2(G)$ for all $y \in G-\Z_2(G)$, then $G^p \le \Z_2(G)$ 
and therefore
$\Phi(G) \le \Z_2(G)$ (since  $\gamma_2(G) \le \Z_2(G)$), which we are not
considering. Now for any arbitrary element $v \in G$, we have $[x,v] = [y^p,v] = 
[y,v]^p$  modulo ${\Z(G)}$. But $[y,v]^p \in \Z(G)$ by \eqref{cond1}. Hence 
$[x,v] \in \Z(G)$ for all $v \in G$. This gives that $x \in \Z_2(G)$, which is
  a contradiction to our supposition that $x \in \Phi(G)-\Z_2(G)$. Hence
  $\Z_2(G) = \Phi(G)$, which proves \eqref{cond2} and the proof of the lemma is complete.    \hfill  $\Box$ 

\end{proof}

The following result is also of independent interest.

\begin{lemma}\label{lemma3}
Let $G$ be a finite $p$-group of class $3$ such that $\gamma_2(G)\Z(G)/\Z(G)$
is cyclic. Then $G/\Z_2(G)$ is generated by $2$ elements.
\end{lemma}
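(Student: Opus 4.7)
The plan is to pass to the class-$2$ quotient $\bar{G} := G/\Z(G)$, whose commutator subgroup $\gamma_2(\bar{G}) = \gamma_2(G)\Z(G)/\Z(G)$ is cyclic of some order $p^{e}$; the class-$3$ assumption on $G$ forces $e \ge 1$. I would fix $u \in \gamma_2(G)$ so that $u\Z(G)$ generates $\gamma_2(\bar{G})$, together with $a, b \in G$ satisfying $[a,b] \equiv u \pmod{\Z(G)}$. Such $a, b$ exist because $\gamma_2(\bar{G})$ is cyclic and generated (as a set) by images of commutators, so at least one commutator image must have full order $p^{e}$. The target conclusion is $G = \langle a, b\rangle \Z_2(G)$, which I would split as $G = \langle a, b\rangle \cdot M$ together with $M \subseteq \Z_2(G)$, where
\[
M := \{h \in G : [a,h],\, [b,h] \in \Z(G)\}.
\]

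The first statement is an easy computation inside the class-$2$ quotient $\bar{G}$: since $[\bar{a},\bar{G}] = \langle [\bar{a},\bar{b}]\rangle = \gamma_2(\bar{G})$, for each $g \in G$ one can choose $j$ with $[\bar{a}, \bar{g}\bar{b}^{-j}] = 1$, and then $i$ with $\bar{g}\bar{b}^{-j}\bar{a}^{-i} \in C_{\bar{G}}(\bar{a}) \cap C_{\bar{G}}(\bar{b})$, whose preimage in $G$ is exactly $M$. The substantive statement $M \subseteq \Z_2(G)$ I would package through the non-degenerate alternating biadditive form
\[
\phi \colon A \times A \longrightarrow C, \qquad A := G/\Z_2(G),\ \ C := \gamma_2(G)/\bigl(\gamma_2(G) \cap \Z(G)\bigr),
\]
sending $(g\Z_2(G), h\Z_2(G))$ to $[g,h]\bigl(\gamma_2(G) \cap \Z(G)\bigr)$. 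Well-definedness and biadditivity rest on $[\gamma_2(G), \Z_2(G)] \subseteq \gamma_3(G) \subseteq \Z(G)$; non-degeneracy is just the definition of $\Z_2(G)$. Since $\phi(\bar{a}, \bar{b})$ generates the cyclic group $C$ of order $p^{e}$, the subgroup $A_{1} := \langle \bar{a}, \bar{b}\rangle \subseteq A$ is a hyperbolic plane isomorphic to $\mathbb{Z}/p^{e} \oplus \mathbb{Z}/p^{e}$ on which $\phi$ is non-degenerate; consequently $A = A_{1} \oplus A_{1}^{\perp}$ orthogonally, $\phi|_{A_{1}^{\perp}}$ is still non-degenerate, and the image of $M$ in $A$ is exactly $A_{1}^{\perp}$. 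The task collapses to showing $A_{1}^{\perp} = 0$.

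The main obstacle, and the only place where the class-$3$ hypothesis is truly used, is to exclude a non-trivial $A_{1}^{\perp}$. Assume for contradiction $A_{1}^{\perp} \neq 0$. Non-degeneracy of $\phi$ on $A_{1}^{\perp}$ produces $h_{1}, h_{2} \in M$ with $[h_{1}, h_{2}] \equiv u^{k} \pmod{\gamma_2(G) \cap \Z(G)}$ for some $k = p^{m}v$ with $\gcd(v,p)=1$ and $0 \le m < e$. Now the Hall--Witt identity in $\gamma_{3}(G)$, which is central and therefore reduces to a bilinear relation, applied to $(h_{1}, h_{2}, a)$ kills the two cross terms $[[h_{2}, a], h_{1}]$ and $[[a, h_{1}], h_{2}]$ because $[h_{i}, a] \in \Z(G)$, leaving $[[h_{1}, h_{2}], a] = 1$; writing $[h_{1}, h_{2}] = u^{k}z$ with $z \in \Z(G)$ this becomes $[u, a]^{k} = 1$, hence $[u, a]^{p^{m}} = 1$. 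Symmetrically, $[u, b]^{p^{m}} = 1$. Applying Hall--Witt to $(a, b, h)$ for $h \in M$ instead forces $[u, h] = 1$. Because $[u, -] \colon G \to \gamma_{3}(G)$ is a homomorphism into the central subgroup $\gamma_{3}(G)$, and because every $g \in G$ factors as $a^{i}b^{j}h$ with $h \in M$, one deduces $[u, g]^{p^{m}} = 1$ for all $g \in G$, i.e.~$u^{p^{m}} \in \Z(G)$. This contradicts the order of $u$ modulo $\Z(G)$ being exactly $p^{e} > p^{m}$; hence $A_{1}^{\perp} = 0$ and $A = \langle \bar{a}, \bar{b}\rangle$ is generated by two elements.
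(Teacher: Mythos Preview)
Your proof is correct and the core engine---Hall--Witt applied to triples mixing elements of $M$ with $a,b$---is the same as in the paper. The packaging, however, is genuinely different. The paper works concretely with a minimal generating set $\{x_1,\ldots,x_d\}$, replaces each $x_j$ for $j\ge 3$ by $y_j = x_1^{\beta_j}x_jx_2^{-\alpha_j}$ so that $[x_i,y_j]\in\Z(G)$ for $i=1,2$, and then shows \emph{directly} that $[y_j,y_k]\in\Z(G)$: Hall--Witt gives $[y_j,y_k,x_i]=1$ and $[x_1,x_2,y_l]=1$, and writing $[y_j,y_k]=[x_1,x_2]^t z$ with $z\in\Z(G)$ finishes it. You instead set up the alternating form $\phi$ on $A=G/\Z_2(G)$, split off the hyperbolic plane $A_1=\langle\bar a,\bar b\rangle$, and argue by contradiction that $A_1^{\perp}\ne 0$ would force $u^{p^m}\in\Z(G)$ for some $m<e$. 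Your route is more structural and makes the role of non-degeneracy transparent; the paper's route is shorter and avoids the detour through the order of $u$ modulo $\Z(G)$ (indeed, your own ingredients $[h_1,h_2,a]=[h_1,h_2,b]=1$ and $[u,h]=1$ for $h\in M$ already combine, via $g=a^ib^jh$, to give $[h_1,h_2]\in\Z(G)$ directly, without the contradiction). Either way the argument goes through.
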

\begin{proof}
Let $\{x_1, x_2, \ldots, x_d\}$ be a minimal generating set for $G$. Since $\gamma_2(G)\Z(G)/$ $\Z(G)$ is
cyclic, we can assume without loss of generality that $[x_1,  x_2]\Z(G)$ generates $\gamma_2(G)\Z(G)/\Z(G)$.  Then $[x_1, x_j] = [x_1, x_2]^{\alpha_j}$ modulo $\Z(G)$ for some integer  $\alpha_j$, where $3 \le j \le d$. Since the nilpotency class of  $G/\Z(G)$ is $2$, it follows that $[x_1, x_jx_2^{-\alpha_j}] \in \Z(G)$. Similarly if $[x_2, x_j] = [x_1, x_2]^{\beta_j}$ modulo $\Z(G)$ for some integer  $\beta_j$, where $3 \le j \le d$, then $[x_2, x_1^{\beta_j}x_j] \in \Z(G)$.
Thus it follows that $[x_i, x_1^{\beta_j}x_jx_2^{-\alpha_j}] \in \Z(G)$ for all $i, \; j$ such that $1 \le i \le 2$ and $3 \le j \le d$. Let us set $y_j = x_1^{\beta_j}x_jx_2^{-\alpha_j}$, where $3 \le j \le d$. Notice that the set $\{x_1, x_2, y_3, \ldots, y_d\}$ also generates $G$. 
Thus it follows that $[x_i, y_j] \in \Z(G)$ for all $1 \le i \le 2$ and $3 \le j \le d$. 
Also $[y_j, x_i] \in \Z(G)$ for the same values of $i$ and $j$. Therefore by Hall-Witt identity
\[[x_i, y_j, y_k][y_k, x_i, y_j][y_j, y_k, x_i] = 1\] 
for $1 \le i \le 2$ and $3 \le j, k \le d$, we get 
\begin{equation}\label{equation1}
[y_j, y_k, x_i] = 1.
\end{equation}
Similarly $[x_1, x_2, y_l] = 1$, since $[y_l, x_1, x_2] = 1 = [x_2, y_l, x_1] = 1$, where 
$3 \le l \le d$. Now for all $j, k$ such that $3 \le j, k \le d$, we have
$[y_j, y_k] = [x_1, x_2]^tz$ for some integer $t$ and some $z \in \Z(G)$.
Since $[x_1, x_2, y_l] = 1$, this gives $[y_j, y_k, y_l] = 1$, where $3 \le j,
k, l \le d$. This, along with \eqref{equation1}, proves that 
$[y_j, y_k] \in \Z(G)$ for all $j, k$ such that $3 \le j, k \le d$. 
Since $[y_j, x_i] \in \Z(G)$ for  $3 \le j \le d$ and 
$1 \le i \le 2$, it follows that $[y_j, G] \subseteq \Z(G)$ for all $j$ such that
$3 \le j \le d$. Hence $y_j \in \Z_2(G)$ for $3 \le j \le d$. This proves that $G/\Z_2(G)$ is generated by $x_1\Z_2(G)$ and $x_2\Z_2(G)$, which completes the proof of the lemma.
\hfill $\Box$

\end{proof}

\begin{prop}\label{prop2}
Let $G$ be a Camina-type  finite $p$-group of nilpotency class $3$ such that  $|\gamma_3(G)| = p$ and $\gamma_2(G)\Z(G)/\Z(G)$ is cyclic. Then $G$ is generated by $2$ elements.
\end{prop}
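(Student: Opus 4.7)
The plan is essentially to string together the two lemmas immediately preceding the proposition, since they seem to have been crafted precisely for this purpose. The only real question is to verify that their hypotheses match ours and that the conclusions combine the right way.

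First, I would invoke Lemma \ref{lemma2a1}. The hypothesis ``Camina-type finite $p$-group of class $3$ with $|\gamma_3(G)| = p$'' is exactly what that lemma requires, so I may conclude that $\Z_2(G) = \Phi(G)$. This is the crucial step: it converts a statement about $G/\Z_2(G)$ into a statement about $G/\Phi(G)$, which is what controls $d(G)$.

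Next, I would invoke Lemma \ref{lemma3}. Its hypothesis ``$G$ is a finite $p$-group of class $3$ with $\gamma_2(G)\Z(G)/\Z(G)$ cyclic'' is again exactly what we have (the Camina-type assumption is not needed here), so it delivers $d(G/\Z_2(G)) \le 2$.

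Combining, $d(G) = d(G/\Phi(G)) = d(G/\Z_2(G)) \le 2$. Since the class of $G$ is $3$, $G$ is certainly non-cyclic, so in fact $d(G) = 2$.

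There is no real obstacle; the preceding two lemmas have done all the technical work, and the role of the two separate hypotheses (on $|\gamma_3(G)|$ and on cyclicity of $\gamma_2(G)\Z(G)/\Z(G)$) is simply to feed Lemma \ref{lemma2a1} and Lemma \ref{lemma3} respectively. The only subtlety worth a sentence in the write-up is pointing out \emph{why} the identification $\Z_2(G) = \Phi(G)$ is what bridges ``$G/\Z_2(G)$ is $2$-generated'' to ``$d(G) = 2$''.
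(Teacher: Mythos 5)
Your proof is correct and is essentially identical to the paper's own argument: it applies Lemma \ref{lemma2a1} to get $\Z_2(G) = \Phi(G)$, then Lemma \ref{lemma3} to get that $G/\Z_2(G)$ is $2$-generated, and concludes via $d(G) = d(G/\Phi(G)) = d(G/\Z_2(G))$.
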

\begin{proof}
Let $G$ be a group satisfying all the conditions of the statement. Then it follows from Lemma \ref{lemma2a1} that $(\gamma_2(G))^p \le \Z(G)$ and $\Z_2(G) = \Phi(G)$. 
Since $\gamma_2(G)\Z(G)/\Z(G)$ is cyclic,  it follows from Lemma \ref{lemma3} that $G/\Z_2(G)$ is generated by $2$ elements and therefore $G/\Phi(G)$ is generated by $2$ elements, since $\Z_2(G) = \Phi(G)$. This shows that $G$ is generated by $2$ elements and the proof of the proposition is complete. \hfill $\Box$

\end{proof}

Here is the main result of this section.

\begin{thm}\label{thm}
If $G$ is a  Camina-type finite $p$-group of nilpotency class at least $3$ with cyclic commutator subgroup, then $d(G) =2$.
\end{thm}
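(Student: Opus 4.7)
The plan is to reduce to Proposition \ref{prop2} by passing to a carefully chosen quotient. Since $\gamma_2(G)$ is cyclic and $G$ has nilpotency class $c \geq 3$, the subgroup $\gamma_3(G)$ is a nontrivial cyclic $p$-group. Let $N$ denote its unique subgroup of index $p$ (taking $N = 1$ if $|\gamma_3(G)| = p$). Then $N$ is characteristic in $\gamma_3(G) \trianglelefteq G$, hence normal in $G$, and $N \leq \gamma_2(G) \leq \Phi(G)$. Set $\bar G := G/N$; because $N \leq \Phi(G)$, we have $d(G) = d(\bar G)$, so it suffices to prove $d(\bar G) = 2$.

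I would then verify that $\bar G$ satisfies the hypotheses of Proposition \ref{prop2}. Lemma \ref{lemma2} immediately gives that $\bar G$ is Camina-type, since $N \leq \gamma_2(G)$. By construction $\gamma_3(\bar G) = \gamma_3(G)/N$ has order exactly $p$, and $\gamma_2(\bar G) = \gamma_2(G)/N$ is cyclic, so its image in $\bar G/\Z(\bar G)$ is also cyclic. The one remaining point is that $\bar G$ has nilpotency class exactly $3$, equivalently $\gamma_4(G) \leq N$. If $c = 3$ this is trivial. If $c \geq 4$, nilpotence forces $\gamma_4(G) < \gamma_3(G)$ strictly (otherwise the lower central series would stabilize without ever reaching $1$); since $\gamma_3(G)$ is a cyclic $p$-group, every proper subgroup of $\gamma_3(G)$ sits inside its unique maximal subgroup $N$, so in particular $\gamma_4(G) \leq N$.

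With the hypotheses verified, Proposition \ref{prop2} yields $d(\bar G) = 2$, and hence $d(G) = 2$.

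I do not expect a serious obstacle: the result is a clean reduction built on two ingredients already established, namely that the Camina-type property descends to quotients by normal subgroups contained in $\gamma_2(G)$ (Lemma \ref{lemma2}) and the rigidity of the subgroup lattice of a cyclic $p$-group. The only delicate moment is the verification that $\gamma_4(G) \leq N$ when $c \geq 4$, which is handled by the strict descent of the lower central series combined with the cyclicity of $\gamma_3(G)$.
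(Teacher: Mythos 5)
Your argument is correct and is essentially the paper's own proof: both pass to $\bar G = G/N$ for a maximal subgroup $N$ of $\gamma_3(G)$ normal in $G$, invoke Lemma \ref{lemma2} to see $\bar G$ is Camina-type with $|\gamma_3(\bar G)| = p$ and cyclic $\gamma_2(\bar G)\Z(\bar G)/\Z(\bar G)$, apply Proposition \ref{prop2}, and conclude via $N \le \Phi(G)$. Your extra care in checking that $\gamma_4(G) \le N$ (via cyclicity of $\gamma_3(G)$ and strict descent of the lower central series) is a detail the paper leaves implicit — it also follows at once because the normal subgroup $\gamma_3(G)/N$ of order $p$ is central in $G/N$ — but it does not change the route.
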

\begin{proof}
 Let $G$ be a finite Camina-type $p$-group of nilpotency class at least $3$. Then there exists a maximal subgroup $N$ of $\gamma_3(G)$ which is normal in $G$.
 Thus it follows from Lemma \ref{lemma2} that $G/N$ is a Camina-type group. Also $G/N$ is of nilpotency class $3$ such that $\gamma_3(G/N)$ is cyclic of order $p$. Thus it follows from Proposition \ref{prop2} that $G/N$ is generated by $2$ elements.  Since $N \le \Phi(G)$, if follows that both  $G$ as well as $G/N$ are minimally generated by equal number of elements. Hence $G$ is generated by $2$ elements and the proof is complete.      \hfill $\Box$

\end{proof}

\section{$2$-generator Camina-type $p$-groups}

In this section we study finite $2$-generator  Camina-type $p$-groups. The following lemma provides a sort of converse of Theorem \ref{thm}.


\begin{lemma}\label{2glemma4}
Let $H$ be a  $2$-generator  Camina-type finite $p$-group of nilpotency class at least $3$ such that $|\gamma_2(H)/\gamma_3(H)| > 2$. Then $\gamma_2(H)$ is cyclic.
\end{lemma}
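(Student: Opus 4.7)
The approach is a minimal-counterexample argument culminating in an explicit commutator calculation in a base case of class $3$ with $|\gamma_3(H)| = p$. Suppose for contradiction that some $2$-generator Camina-type $p$-group $H$ of class at least $3$ with $|\gamma_2(H)/\gamma_3(H)| > 2$ has non-cyclic $\gamma_2(H)$, and take $H$ of minimal order with these properties. Fix a minimal generating set $\{x, y\}$, set $c = [x, y]$, and write $|\gamma_2(H)/\gamma_3(H)| = p^k$; then $\gamma_2(H)/\gamma_3(H) = \langle c\gamma_3(H)\rangle$ is cyclic and $p^k > 2$.

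The first task is to reduce to the base case $H$ of class exactly $3$ with $|\gamma_3(H)| = p$. The main tool is Lemma \ref{lemma2}, which guarantees that Camina-type descends to any quotient $H/N$ with $N \trianglelefteq H$ and $N \leq \gamma_2(H)$. Hence, whenever such a quotient $H/N$ still satisfies the hypotheses of the lemma (class $\geq 3$ and $|\gamma_2/\gamma_3| > 2$), minimality of $H$ forces $\gamma_2(H)/N$ to be cyclic. Applying this with $N$ a subgroup of order $p$ of $\gamma_3(H) \cap \Z(H)$ (properly contained in $\gamma_3(H)$ when $|\gamma_3(H)| > p$), and---for class $n \geq 4$---with $N = \gamma_n(H)$, yields a cyclic quotient $\gamma_2(H)/N$, so $\gamma_2(H) = \langle c\rangle N$. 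Since $\gamma_2(H)$ is abelian whenever $\gamma_2(H)/N$ is cyclic with $N \leq \Z(H)$, a case analysis of the possible direct-sum decompositions $\gamma_2(H) = \langle c\rangle \oplus N$ either yields cyclicity (a contradiction) or drives $H$ down to the base case. The class-$\geq 4$ step---lifting cyclicity of $\gamma_2(H)/\gamma_n(H)$ to cyclicity of $\gamma_2(H)$ itself---is the principal technical obstacle.

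In the base case, write $\gamma_3(H) = \langle z\rangle$ of order $p$. If $c^{p^k} \neq 1$, then $\gamma_3(H) = \langle c^{p^k}\rangle \leq \langle c\rangle$ and $\gamma_2(H) = \langle c\rangle$ is cyclic---a contradiction. Hence $|c| = p^k$ and $\gamma_2(H) = \langle c\rangle \oplus \langle z\rangle$. Since $\gamma_2(H)$ is abelian and $\gamma_3(H) \leq \Z(H)$, the $H$-action on $\gamma_2(H)$ is encoded by two scalars $[c,x] = z^\alpha$ and $[c,y] = z^\beta$ with $\alpha, \beta \in \mathbb{F}_p$. A direct computation of $[x,g]$ for $g = x^a y^b w$ (using $[\gamma_2(H), \gamma_2(H)] = 1$ and $\gamma_3(H) \leq \Z(H)$) shows that the Camina-type condition $z \in [x, H]$ forces $\alpha \not\equiv 0 \pmod p$, and symmetrically $\beta \not\equiv 0 \pmod p$.

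The contradiction comes by testing Camina-type on $u = x^{\beta} y^{-\alpha}$; since $(\beta, -\alpha) \not\equiv (0, 0) \pmod p$, we have $u \notin \Phi(H)$, so $z \in [u, H]$. A short calculation gives
\[
[u, c] = [x^\beta, c]^{y^{-\alpha}} \cdot [y^{-\alpha}, c] = z^{-\alpha\beta} \cdot z^{\alpha\beta} = 1,
\]
so $u$ centralises $\gamma_2(H)$, and therefore $[u, g] = [u, x^a y^b]$ for every $g = x^a y^b w$ with $w \in \gamma_2(H)$. Expanding this commutator via the standard identities (valid since class is $3$) yields
\[
[u, x^a y^b] = c^{\alpha a + \beta b} \cdot z^{F(a, b, \alpha, \beta)},
\]
for an explicit integer polynomial $F$ quadratic in $(a, b)$. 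The equation $[u, g] = z$ thus requires $\alpha a + \beta b \equiv 0 \pmod{p^k}$ together with $F \equiv 1 \pmod p$. However, parametrising the solution set by $(a, b) = (-\beta t, \alpha t) + p^k(u, v)$ and verifying invariance of $F \pmod p$ under the shifts by $p^k$, a symbolic computation shows $F \equiv 0 \pmod p$: the $\alpha$- and $\beta$-contributions cancel identically. This contradicts $F \equiv 1 \pmod p$, proving $z \notin [u, H]$ and violating Camina-type. The lemma follows.
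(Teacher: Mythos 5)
Your base case is correct and is a genuinely different argument from the paper's. The paper's Step~1 chooses a generator $x$ centralising $\gamma_2(H)$, computes $\C_H(x)=\gamma_2(H)\Z(H)\gen{x}$, and uses $|[x,H]|=|H:\C_H(x)|$ to locate an element $y^{p^{n-1}}$ with $[x,y^{p^{n-1}}]=[x,y]^{p^{n-1}}$ generating $\gamma_3(H)$; you instead parametrise the conjugation action on $\gamma_2(H)=\gen{c}\times\gen{z}$ by two scalars $\alpha,\beta$, force both to be units from $z\in[x,H]\cap[y,H]$, and exhibit the witness $u=x^{\beta}y^{-\alpha}\notin\Phi(H)$ centralising $\gamma_2(H)$ with $z\notin[u,H]$. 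I checked the computation: with $s=\alpha a+\beta b$ one gets $2F=s(s-\alpha\beta-\alpha-\beta)$ identically, so $p^k\mid s$ with $p^k>2$ forces $F\equiv 0\pmod p$ (for $p=2$ one needs $4\mid s$, which is exactly where the hypothesis $|\gamma_2(H)/\gamma_3(H)|>2$ enters). This part is sound and self-contained.

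The genuine gap is the reduction to the base case, which is precisely the content of the paper's Steps~2 and~3 and most of the work in the lemma. In your minimal counterexample of class $n\ge 4$, quotienting by order-$p$ central subgroups of $\gamma_n(H)$ and by $\gamma_n(H)$ itself does give, via minimality, that $\gamma_2(H)$ is abelian, that $\gamma_2(H)/\gamma_n(H)$ is cyclic, and eventually that $\gamma_2(H)=\gen{c}\times\gamma_n(H)$ with $|\gamma_n(H)|=p$. But this does \emph{not} put $H$ in the base case, and no contradiction follows from the direct-sum decomposition alone: you must still prove $\gamma_n(H)\le\gen{c}$, i.e.\ lift cyclicity from $\gamma_2(H)/\gamma_n(H)$ to $\gamma_2(H)$. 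That is exactly what the paper's Step~3 does by induction along the lower central series with explicit commutator manipulations in $H/(\gamma_i(H)^p\gamma_{i+1}(H))$, and it is the step you explicitly label ``the principal technical obstacle'' and leave open. (It can in fact be closed inside your framework: since $\gamma_2(H)$ is abelian, $v\mapsto[v,h]$ is a homomorphism on $\gamma_2(H)$, so $\gamma_n(H)=[\gamma_{n-1}(H),H]$ is generated by elements $[c,h]^{p^jt}$ with $j\ge 1$; writing $[c,h]=c^{p^km}w$ with $w\in\gamma_n(H)$ of order $p$ kills $w$ and places every generator in $\gen{c}$, contradicting the direct decomposition.) Likewise, in the class-$3$ case with $|\gamma_3(H)|>p$, ``a case analysis of direct-sum decompositions'' does not finish: one must exhibit a concrete normal $M$ (e.g.\ $M=\gen{c^{p^k}}$, normal because $\gamma_3(H)\le\Z(H)$ there) with $\gamma_2(H)/M$ non-cyclic to contradict minimality. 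As written, the proposal proves the base case but not the lemma.
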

\noindent{\it Proof.}
Let $H$ be the group as given in the statement and $M$ be a subgroup of $\gamma_3(H)$ of index $p$ which is normal in $H$. Then we can always choose a minimal generating set $\{x, y\}$ of $H$ such that ${\bar x} \in \C_{\bar H}(\gamma_2({\bar H}))$, where ${\bar H} = H/M$ and ${\bar x} = xM$. For,  let $H = \gen{x', y}$ such that  neither ${\bar x'}$ nor ${\bar y}$ centralize $\gamma_2({\bar H})$. Since $\gamma_3({\bar H}) \le \Z({\bar H})$, it gives that neither $[{\bar x'}, {\bar u}]$ nor $[{\bar y}, {\bar u}]$ is the identity element of ${\bar H}$, where $u\gamma_3(H)$ is a generator of the cyclic group $\gamma_2(H)/\gamma_3(H)$. Since both $[{\bar x'}, {\bar u}]$ and  $[{\bar y}, {\bar u}]$ lie in $\gamma_3(\bar{H})$ and $\gamma_3({\bar H})$ is cyclic of order $p$, there exists an integer $t$, $1 \le t < p$, such that $[{\bar y}, {\bar u}]^t = [{\bar x'}, {\bar u}]$. Which implies that $[y^{-t}x', u] = 1$ modulo $M$. Hence $\{x = y^{-t}x', y\}$ is a generating set for $H$ with the required property. Notice that $[x, y] = [y^{-t}x', y] = [x', y]$. For the rest of the proof, we fix such a generating set $\{x, y\}$ for $H$ and assume that $u = [x, y]$. Also let  the orders of $x$ and $y$ modulo $\Z(H)$  be $p^m$ and $p^n$ respectively. For the clarity of the exposition, we complete the proof  in $3$ steps.
\vspace{.1in}

\noindent{\bf Step 1.} {\it If $|\gamma_3(H)| = p$, then $\gamma_2(H)$ is cyclic.}
\begin{proof}
Since $|\gamma_3(H)| = p$, the subgroup $M$ considered above is $1$. So the generating set $\{x, y\}$ of $H$ is such that $x \in \C_H(\gamma_2(H))$ (or equivalently $\gamma_2(H) \le \C_H(x)$).  If $[x, y^s] = 1$ for some integer $s$, then $y^s \in \Z(H)$. Thus it follows that for no positive integer $s < n$,  $[x, y^{p^s}] = 1$, since the order $y$ modulo $\Z(H)$ is $p^n$. Similarly for no positive integer $t < m$,  $[x^{p^{t}}, y] = 1$.   Let $h \in H$ be such that $[x, h] =1$. Then $h = vx^iy^j$ for some  $v \in \gamma_2(H)$ and some integers $i, j$. Since $\gamma_2(H) \le \C_H(x)$, $[x, h] =1$ implies that $[x, y^j] = 1$. Thus $j$ is either zero, or a power of $p$ larger than or equal to $p^n$, and therefore  $\C_H(x) \le \gamma_2(H)\Z(H)\gen{x}$. Obviously  $\C_H(x) \ge \gamma_2(H)\Z(H)\gen{x}$. Hence   $\C_H(x) = \gamma_2(H)\Z(H)\gen{x}$.  

Since $H$ is a Camina-type group,  it follows that $[x, H] = \gamma_2(H)$.  Thus 
\[|\gamma_2(H)| = |[x, H]| = |G : \C_H(x)| = p^n.\]
Since $|\gamma_3(H)| = p$, this implies that  $|\gamma_2(H)/\gamma_3(H)| = p^{n-1}$.   Again, since $[x, H] = \gamma_2(H)$ and $\gamma_2(H) \Z(H)\gen{x} =\C_H(x)$, there exists an element $h \in H - \gamma_2(H)\Z(H) \gen{x}$ of the form $h = y^k$ for some positive integer $k$ such that $\gen{[x, h]} = \gamma_3(H)$  and  $1 \le k < p^n$.

Without loss of generality, we can assume that $k$ is some power of $p$ (by modifying $h$, if necessary). It now follows that $k = p^{n-1}$.  Notice that $n \ge 2$, otherwise $h = y$, which is not  possible as $[x, y]$ generates $\gamma_2(H)$ modulo $\gamma_3(G)$. If  $p$ is odd,  $[x, y^{p^{n-1}}] = [x, y]^{p^{n-1}}$, since $|\gamma_3(H)| = p$.  If $p = 2$, then  our hypothesis  $|\gamma_2(H)/\gamma_3(H)| >2$ tells that $p^n = |\gamma_2(G)| \ge 2^3$, which implies that $n \ge 3$. Hence $[x, y^{p^{n-1}}] = [x, y]^{p^{n-1}}$. Set $w = [x, h] = [x, y^{p^{n-1}}] = [x, y]^{p^{n-1}}$. Since $\gamma_2(H)$ is generated by $\gamma_3(H) = \gen{w}$ and $[x, y]$,  it follows  that $\gamma_2(H)$ is cyclic. 
\end{proof}

\noindent{\bf Step 2.} {\it If  $\gamma_4(H) = 1$, then $\gamma_2(H)$ is cyclic.}
\begin{proof}
First we show that $\gamma_3(H)$ is cyclic. Contrarily assume that $\gamma_3(H)$ is not cyclic. Since, by Lemma \ref{lemma2},  factor group of $H$ by any normal subgroup contained in $\gamma_2(H)$ is Camina-type,  it is sufficient to assume that $\gamma_3(H) \cong \C_p \times \C_p$ (by taking $H$ as $H/(\gamma_3(H))^p$). Notice that $\gamma_3(H)$ is generated by $w_1 := [[x, y], x]$ and $w_2 := [[x, y], y]$.  Since $[x, H] = \gamma_2(H)$, there is an element $h \in H$ such that $w_2 = [x, h]$. Since $\gamma_3(H) \le \Z(H)$, $h$ can  be taken  of the form $y^k[x, y]^l$ for some integers $k$ and $l$. Now
\begin{equation}\label{eqn2g1}
w_2 = [x, h] = [x, y^k[x, y]^l] =  [x, [x, y]^l] [x, y^k] = w_1^{-l} [x, y^k].
\end{equation}
Since the order of $y$ modulo $\Z(H)$ is $p^n$, notice that  $k$ is not congruent to  $0$ modulo $p^n$. Otherwise $w_2$ will be a power of $w_1$, which is not the case. Now \eqref{eqn2g1} tells us that  $1 \neq [x, y^k] \in \gamma_3(H)$. Hence $[x, y^k]^p = [x, y^{pk}]  = 1$, and therefore $y^{pk} \in \Z(H)$. Then $k = k_1 p^{n-1}$ for some non-zero positive integer $k_1$  coprime to $p$.  If $n =1$, then obviously $[x, y^k] = [x, y^{k_1}] = [x, y^{k_1}]^{p^{n-1}}$. So let  $n >1$. Since the exponent of $\gamma_3(H)$ is $p$,  in both of the cases, i.e., $p$ is odd or $p = 2$ and $|\gamma_2(H)/\gamma_3(H)| > 2$, it follows that $[x, y^k] = [x, y^{k_1}]^{p^{n-1}}$. Notice that $[x, y^{k_1}] = [[x, y], y]^{k_1(k_1-1)/2}[x, y]^{k_1} = w_2^{k_1(k_1-1)/2}[x, y]^{k_1}$. Therefore, for $n \ge 2$,  $[x, y^k] = [x, y^{k_1}]^{p^{n-1}} = [x, y]^{k_1p^{n-1}} = [x, y]^k$, since the exponent of $\gamma_3(H)$ is $p$. Thus  \eqref{eqn2g1}  tells us that $w_1^lw_2$ is a power of $[x, y]$. We now claim that $n$ is always $\ge 2$. For,  if $n = 1$, then $k = k_1$. Thus $[x, y^k] = [[x,y], y]^{k(k-1)/2}[x, y]^k$. Since $[x, y^k] \in \gamma_3(H)$, this implies that $[x, y]^k \in \gamma_3(H)$, which is not possible because  $k = k_1$ is coprime to $p$. 

Obviously the set $\{w=w_1^lw_2, w_1\}$ generates $\gamma_3(H)$. Let $N = \gen{w}$. Then $N$ is a normal subgroup of $H$ of order $p$. Set $K = H/N$.  Then, notice that,  $K$ is a Camina-type group and $\gamma_3(K)$ is cyclic of order $p$.  Now we apply  Step 1 to conclude  that modulo $N$, $w_1$ is a power of $[x, y]$. Since $N$ is generated by a power of $[x, y]$, it follows that $w_1$ is a power of $[x, y]$. Thus each element of $\gamma_3(H)$ is a power of $[x, y]$, which is a contradiction. Hence $\gamma_3(H)$ is cyclic. 

Let $T := \gen{w^p}$ be the maximal subgroup of $\gamma_3(H) := \gen{w}$ generated by $w^p$. Thus $T$ is normal in $H$. Consider the quotient group $H/T$, which satisfies all the given hypotheses and $|\gamma_3(H/T)| = p$. Hence by Step 1 $wT$ is a power of $[x, y]T$, and therefore there exist positive integers $\alpha_1, \alpha_2$ such that $wK = [x, y]^{\alpha_1p^{\alpha_2}}T$, where $\alpha_1$ is coprime to $p$. Since $T$ is generated by $w^p$, there exists a non-zero integer  $\beta$ such that $w^{-1}[x, y]^{\alpha_1p^{\alpha_2}} = w^{p \beta}$. Hence $w^{p \beta + 1}$ is a power of $[x, y]$. Since $w^{p \beta + 1}$ also generates $\gamma_3(H)$, and $\gamma_2(H)/\gamma_3(H)$ is generated by $[x, y]\gamma_3(H)$, it follows that $\gamma_2(H)$ is a cyclic subgroup generated by $[x, y]$.  
\end{proof}

\noindent{\bf Step 3.} {\it  $\gamma_2(H)$ is always cyclic.}
\begin{proof}
 We are going to show that $\gamma_2(H)$ is generated by $[x, y]$. We'll do this by proving that $\gamma_2(H)/\gamma_i(H)$ is generated by $[x, y]\gamma_i(H)$ for $4 \le i \le c+1$, where $c$ denotes the nilpotency class of $H$. That $\gamma_2(H)/\gamma_4(H)$ is generated by $[x, y]\gamma_4(H)$ follows from Step 2 as $H/\gamma_4(H)$ is of nilpotency class $3$.  Thus the elements $[x, y, x]$ and $[x, y, y]$ are powers of $[x, y]$ modulo $\gamma_4(H)$. Notice that  $\gamma_4(H)$ is generated by $\gamma_5(H)$ and the elements $[x, y, x, x]$, $[x, y, x, y]$, $[x, y, y, x]$ and $[x, y, y, y]$.  To make the reader little comfortable,  we first show that $\gamma_2(H)/\gamma_5(H)$ is cyclic. It is sufficient to prove that $\gamma_2(S)$ is cyclic, where $S = H/(\gamma_4(H)^p\gamma_5(H))$. We'll now work in $S$ and all elements are considered modulo $\gamma_4(H)^p\gamma_5(H)$. Notice that $\gamma_4(S)$ is elementary abelian group generated by  $[x, y, x, x]$, $[x, y, x, y]$, $[x, y, y, x]$ and  $[x, y, y, y]$. Let $[x,y,x] = [x,y]^{\alpha}v_1$ and  $[x,y,y] = [x,y]^{\beta}v_2$, where $\alpha$ and $\beta$  are non-zero integers which are multiples of $p$, and $v_1, v_2 \in \gamma_4(S)$. Now 
\[[x, y, x, y] = [[x,y]^{\alpha} v_1, y] = [[x,y]^{\alpha}, y] = ([x,y]^{\beta}v_2)^{\alpha} = [x,y]^{\alpha \beta},\]
since $v_1, v_2 \in \Z(S)$ and $\alpha$ is a multiple of $p$.

Similarly, we can show that other generators of $\gamma_4(S)$ are also some powers of $[x, y]$. This proves that $\gamma_2(S)$ is a cyclic subgroup of $S$ generated by $[x,y]$ (modulo  $\gamma_4(H)^p\gamma_5(H)$). Hence  it follows that $\gamma_2(H)/\gamma_5(H)$ is cyclic.

Now, by inductive argument, assume that $\gamma_2(H)/\gamma_{i}(H)$ is a cyclic group generated by $[x, y]\gamma_{i}(H)$. We prove that $\gamma_2(H)/\gamma_{i+1}(H)$ is cyclic.  Consider $S = H/(\gamma_{i}(H)^p\gamma_{i+1}(H))$. Again we read elements modulo $\gamma_{i}(H)^p\gamma_{i+1}(H)$. Notice that $\gamma_i(S)$ is an elementary abelian central subgroup of $S$ generated by at most $2^{i-2}$ commutators of weight $i$. But we know that all commutators of weight $i-1$ are powers of $[x, y]$ modulo $\gamma_{i}(H)$. So a given commutator $c$ of weight $i$ in $S$ can be written as $[[x,y]^{\alpha'}v_1', x]$ or $[[x,y]^{\alpha'}v_1', y]$, where $\alpha'$ is a non-zero integer which is a multiple of $p$, and $v_1' \in \gamma_i(S)$. Also $[x,y,x] = [x,y]^{\alpha''}v_1''$ and $[x,y,y] = [x,y]^{\beta''}v_2''$, where $\alpha''$ and $\beta''$ are non-zero integers which are multiples of $p$, and $v_1'', v_2'' \in \gamma_i(S)$. Since $\gamma_i(S)$ is elementary abelian central subgroup of $S$, it is easy to see that $c$ is some power of $[x, y]$ in $S$. 
Hence it follows that $\gamma_2(H)/\gamma_{i+1}(H)$ is cyclic. Proof   is now complete by inductive argument. \hfill $\Box$

\end{proof}

We are now going to show that most of $2$-generator groups with cyclic commutator subgroup satisfy Hypothesis A, and therefore these are Camina-type groups.
Let $G$ be a $2$-generator finite $p$-group of nilpotency class at least $3$ with cyclic commutator subgroup $\gamma_2(G) = \gen{u}$. Assume that  $|\gamma_2(G)/\gamma_3(G)| > 2$. Notice that the condition $|\gamma_2(G)/\gamma_3(G)| > 2$ is equivalent to the condition $[u, G] \subseteq \gen{u^4}$.  Set $C = \C_G(\gamma_2(G))$. Then $G/C$ is cyclic, because it is isomorphic to the  cyclic group of automorphisms of $\gamma_2(G)$ induced by $G$ by conjugation. Let $y \in G$ be such that $G/C = \gen{yC}$. Then there exists an element $x \in C$ such that $G = \gen{x, y}$ and $[x, u] = 1$.  Let $B = \gen{x, u}$. Then $B$ is an abelian  normal subgroup of $G$ and $G = B\gen{y}$. 
Set $|\gamma_2(G)| = p^r$ for some positive integer $r$. 
Since the order of $\gamma_2(G)$ is $p^r$ and $\gamma_2(G) =  \{[x, y^k] \mid 0 \le k < p^r\} (= \{[x^k, y] \mid 0 \le k < p^r\}$) (by Lemma \ref{yC82}), it follows that $|G/\Z(G)| \ge p^{2r}$. It again follows from Lemma \ref{yC82} that 
$G/\Z(G) \cong \Inn(G) \le \Aut_c(G) = \Inn(G)$. Since, $G$ being minimally generated by two elements,  $|\Aut_c(G)| \le |\gamma_2(G)|^2 = p^{2r}$.  We get  
\[|\Aut_c(G)| = |G/\Z(G)| = p^{2r} .\]
This shows that $G$ satisfies Hypothesis A. Thus we have proved

\begin{lemma}\label{2glemma5}
Let $G$ be a $2$-generator finite $p$-group of nilpotency class at least $3$ such that $\gamma_2(G)$ is cyclic and  $|\gamma_2(G)/\gamma_3(G)| > 2$. Then $G$ satisfies Hypothesis A, and therefore $G$ is Camina-type group.
\end{lemma}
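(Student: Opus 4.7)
The plan is to show directly that $|\Aut_c(G)| = |\gamma_2(G)|^{d(G)} = p^{2r}$, where $p^r = |\gamma_2(G)|$, by first proving $\Aut_c(G) = \Inn(G)$ and then exhibiting $p^{2r}$ distinct inner automorphisms. Cheng's lemma (Lemma~\ref{yC82}) is the key tool: writing $\gamma_2(G) = \langle u \rangle$, the hypothesis $|\gamma_2(G)/\gamma_3(G)| > 2$ guarantees $[u, G] \subseteq \gamma_3(G) \subseteq \langle u^4 \rangle$ in the $p = 2$ case, so Cheng's hypotheses are met; the same observation shows the image of $G$ in $\Aut(\gamma_2(G))$ is cyclic in all cases, since conjugation acts on $u$ by $u \mapsto u^{1+4k}$ and such maps form a cyclic subgroup of $\Aut(\mathbb Z/2^r)$.

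Next, I would choose $y \in G$ whose image generates $G/\C_G(\gamma_2(G))$, and pick $x \in \C_G(\gamma_2(G))$ so that $G = \langle x, y \rangle$. By Cheng's lemma~(i), the set $\{[x, y^k] : 0 \le k < p^r\}$ exhausts $\gamma_2(G)$, and by symmetry so does $\{[y, x^j] : 0 \le j < p^r\}$. For the inner automorphism $c_{x^j y^k}$, since $x$ commutes with both $u$ and $x^j$, one computes $c_{x^j y^k}(x) = x[x, y^k]$ and $c_{x^j y^k}(y) = y \cdot [y, x^j]^{y^k}$. As $k$ varies, the first coordinate takes $p^r$ distinct values; for each fixed $k$, varying $j$ yields $p^r$ distinct values in the second (since $y^k$-conjugation is a bijection on the cyclic group $\gamma_2(G)$). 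Hence the $p^{2r}$ automorphisms $c_{x^j y^k}$ are pairwise distinct, so $|\Inn(G)| \ge p^{2r}$.

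To finish, every $\alpha \in \Aut_c(G)$ sends $g$ to an element of $g\gamma_2(G)$ and so induces the identity on $G/\gamma_2(G)$; Cheng's lemma~(ii) then forces $\alpha \in \Inn(G)$, giving $\Aut_c(G) = \Inn(G)$. The chain $p^{2r} \le |\Inn(G)| = |\Aut_c(G)| \le |\gamma_2(G)|^{d(G)} = p^{2r}$, with the upper bound from \eqref{lineq}, therefore collapses to equality. This is Hypothesis A, and the final assertion that $G$ is Camina-type then follows from statement~(ii) in the introduction.

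The delicate step I expect to need most care with is the very first one: producing the distinguished generating pair $\{x, y\}$ with $x \in \C_G(\gamma_2(G))$. This requires $G/\C_G(\gamma_2(G))$ to be cyclic, which is precisely where the hypothesis $|\gamma_2(G)/\gamma_3(G)| > 2$ is genuinely needed when $p = 2$, since otherwise the image of $G$ in $\Aut(\mathbb Z/2^r) \cong \mathbb Z/2 \times \mathbb Z/2^{r-2}$ could fail to be cyclic and no such centralizing generator $x$ would exist; the same hypothesis is what makes Cheng's lemma itself applicable, so it is doing double duty here.
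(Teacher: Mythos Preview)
Your proposal is correct and follows essentially the same approach as the paper: choose a generating pair $\{x,y\}$ with $x \in \C_G(\gamma_2(G))$, invoke Cheng's lemma~(i) to obtain $p^{2r}$ distinct inner automorphisms (and hence $|G/\Z(G)| \ge p^{2r}$), invoke Cheng's lemma~(ii) to get $\Aut_c(G) = \Inn(G)$, and squeeze against the upper bound \eqref{lineq}. Your explicit count of the $c_{x^jy^k}$ and your care in verifying cyclicity of $G/\C_G(\gamma_2(G))$ for $p=2$ are slightly more detailed than the paper's terse treatment, but the argument is the same.
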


Combining Lemmas \ref{2glemma4} and \ref{2glemma5}, we get
\begin{cor}\label{2gcor1}
If $G$ is a $2$-generator Camina-type finite $p$-group such that $|\gamma_2(G)/\gamma_3(G)| > 2$, then $\gamma_2(G)$ is cyclic and $G$ satisfies Hypothesis A.
\end{cor}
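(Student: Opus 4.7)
The plan is to dispatch the two assertions in sequence, splitting on the nilpotency class of $G$. First I note that the statement is essentially the conjunction of Lemmas \ref{2glemma4} and \ref{2glemma5}, with a small gap in nilpotency class $2$ that must be addressed separately.

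If the nilpotency class of $G$ is exactly $2$, then $\gamma_3(G)=1$, so $\gamma_2(G)/\gamma_3(G)\cong\gamma_2(G)$, and the hypothesis $|\gamma_2(G)/\gamma_3(G)|>2$ just tells us $|\gamma_2(G)|>2$. Since $G$ is generated by two elements $x,y$ and has class $2$, the commutator subgroup $\gamma_2(G)$ is generated by the single commutator $[x,y]$ and is therefore cyclic. Because $G$ is Camina-type of class $2$, Proposition \ref{prop1b} gives that $G$ satisfies Hypothesis A. So both conclusions hold in this case without invoking the two lemmas.

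If instead the nilpotency class of $G$ is at least $3$, then $G$ meets exactly the hypotheses of Lemma \ref{2glemma4}, which directly yields that $\gamma_2(G)$ is cyclic. With cyclicity of $\gamma_2(G)$ now in hand, and since $|\gamma_2(G)/\gamma_3(G)|>2$ is preserved, the hypotheses of Lemma \ref{2glemma5} are satisfied, and that lemma delivers the conclusion that $G$ satisfies Hypothesis A.

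There is no real obstacle here: the work has been done in the two preceding lemmas and in Proposition \ref{prop1b}. The only thing worth being careful about is not to forget the class $2$ case, which is not covered by the lemmas because both of them assume nilpotency class at least $3$; the remark that a $2$-generator group of class $2$ automatically has cyclic commutator subgroup closes that gap cleanly.
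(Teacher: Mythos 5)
Your proof is correct and follows essentially the same route as the paper, which simply combines Lemmas \ref{2glemma4} and \ref{2glemma5}. Your extra care with the class-$2$ case (where $\gamma_2(G)=\gen{[x,y]}$ is automatically cyclic and Proposition \ref{prop1b} gives Hypothesis A) is a legitimate observation, since the corollary as stated does not assume class at least $3$ while both cited lemmas do; the paper leaves this implicit.
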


The preceding discussion gives: 

\begin{prop}\label{2gprop1}
Let $G$ be a $2$-generator finite $p$-group of nilpotency class at least $3$ such that $\gamma_2(G)$ is cyclic and  $|\gamma_2(G)/\gamma_3(G)| > 2$. Then $G$ can be  presented by
\begin{equation}\label{2ggrp}
G = \gen{x, y, u \mid x^{p^m} = u^i, y^{p^n} = x^ju^k, [x, y] = u, u^{p^r} = 1 = [u, x], [u, y] = u^{p^s}},
\end{equation}
where $m, n, i, j, k, s$  are some non-negative integers such that  $1 \le s <r$  ($s \ge 2$ if $p = 2$). 
\end{prop}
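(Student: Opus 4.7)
The plan is to assemble the presentation from the structural information already in hand from the paragraph preceding the proposition: one has generators $x, y$ of $G$ with $u := [x, y]$ generating $\gamma_2(G)$, $x \in C := \C_G(\gen{u})$, $B := \gen{x, u}$ abelian and normal in $G$, and $G/B$ cyclic. Three of the six relations are then automatic: $[x, y] = u$ by choice, $[u, x] = 1$ because $x$ centralises $\gen{u}$, and $u^{p^r} = 1$ because $|\gamma_2(G)| = p^r$.

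First I would extract the exponent $s$ from the conjugation action of $y$ on $\gen{u}$. Since $G$ is a $p$-group, this induces an automorphism of $\gen{u}$ of $p$-power order, so $y^{-1} u y = u^{1 + a}$ with $a$ divisible by $p$ (and by $4$ when $p = 2$). Writing $a = p^s a'$ with $\gcd(a', p) = 1$, and using that any generator $1 + p^t a''$ of the cyclic $p$-part of $(\mathbb Z/p^r\mathbb Z)^{\times}$ can be replaced by $1 + p^t$ after passing to an appropriate power, I would replace $y$ by a suitable coprime-to-$p$ power (which together with $x$ still generates $G$) to arrange $[u, y] = u^{p^s}$ on the nose. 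Then $\gamma_3(G) = [u, G] = \gen{[u, x], [u, y]} = \gen{u^{p^s}}$, so $|\gamma_2(G)/\gamma_3(G)| = p^s$; the hypothesis $|\gamma_2(G)/\gamma_3(G)| > 2$ forces $p^s > 2$ (so $s \ge 1$ always and $s \ge 2$ when $p = 2$), while the nilpotency class being at least $3$ forces $\gamma_3(G) \ne 1$, hence $s < r$.

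For the remaining two power relations I would use the chain $\gen{u} \le B \le G$. The quotient $B/\gen{u}$ is cyclic, generated by $x\gen{u}$; letting $p^m$ be its order, $m$ is minimal with $x^{p^m} \in \gen{u}$, and one writes $x^{p^m} = u^i$. Analogously $G/B$ is cyclic, generated by $yB$; letting $p^n$ be its order, $n$ is minimal with $y^{p^n} \in B$, and one writes $y^{p^n} = x^j u^k$.

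Finally I would verify that these six relations actually present $G$. Let $\widetilde{G}$ denote the abstractly presented group. The relations $[u, x] = 1$ and $[u, y] = u^{p^s}$ force $\gen{\widetilde{u}}$ to be normal in $\widetilde{G}$, hence $\widetilde{B} := \gen{\widetilde{x}, \widetilde{u}}$ to be abelian and normal; the power relations then give $|\widetilde{B}| \le p^{m+r}$ and $|\widetilde{G}/\widetilde{B}| \le p^n$, so $|\widetilde{G}| \le p^{m+n+r}$. On the other hand, the construction in $G$ yields $|G| = |G/B| \cdot |B/\gen{u}| \cdot |\gen{u}| = p^{m+n+r}$. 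Since the obvious map $\widetilde{G} \to G$ sending generators to generators is surjective and respects all the relations, it must be an isomorphism. The main obstacle is the careful choice of $y$ producing $[u, y] = u^{p^s}$ exactly (and not merely up to a unit), which for $p = 2$ genuinely needs the hypothesis $|\gamma_2(G)/\gamma_3(G)| > 2$ to rule out the $u \mapsto u^{-1}$ contribution from the non-cyclic $2$-part of $\Aut(\gen{u})$; the order count in the last step is then routine.
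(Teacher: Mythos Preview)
Your proposal is correct and follows the same structural setup as the paper's preceding discussion, which is in fact all the paper offers by way of proof (it simply writes ``The preceding discussion gives:'' before stating the proposition). Your argument is considerably more detailed than the paper's---in particular the normalization of $y$ by a coprime-to-$p$ power to obtain $[u,y] = u^{p^s}$ exactly, the extraction of the power relations from the chain $\gen{u} \le B \le G$, and the order-count verification that the relations present $G$---are all steps the paper leaves to the reader; one small point you might make explicit is that after replacing $y$ one should also redefine $u := [x,y_{\text{new}}]$ (which still generates $\gamma_2(G)$ since it does so modulo $\gamma_3(G)$, and still satisfies $[u,y] = u^{p^s}$ because conjugation by $y$ is a power map on $\gen{u}$).
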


\begin{thm}\label{2gthm1}
Let $G$ be a $2$-generator  Camina-type finite $p$-group of nilpotency class at least $3$ such  that  $|\gamma_2(G)/\gamma_3(G)| > 2$. Then $G$ is isoclinic to some group defined in \eqref{Intgrp1}.
\end{thm}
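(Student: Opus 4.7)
The plan is to exploit the explicit presentation of $G$ supplied by Proposition \ref{2gprop1} and construct an isoclinism to a group of the form \eqref{Intgrp1} by reading off its structural invariants and matching parameters.

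First I would extract the relevant invariants from the presentation
\[ G = \gen{x, y, u \mid x^{p^m} = u^i,\; y^{p^n} = x^j u^k,\; [x, y] = u,\; u^{p^r} = 1 = [u, x],\; [u, y] = u^{p^s}}. \]
The relations immediately give that $\gamma_2(G) = \gen{u}$ is cyclic of order $p^r$, that $\gamma_3(G) = \gen{u^{p^s}}$, and that $y$ acts on $u$ by $u \mapsto u^{1+p^s}$. Since $B = \gen{x, u}$ is abelian and $G = B\gen{y}$, every element of $G$ has the form $x^a y^b u^c$, and I would compute $\Z(G)$ by imposing $[x^a y^b u^c, x] = 1 = [x^a y^b u^c, y]$ to obtain $\Z(G)$ and hence $G/\Z(G)$ explicitly in terms of $m, n, r, s$ (plus the gluing data $i, j, k$).

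Next I would do the parallel computation for
\[ K = \gen{X, Y \mid X^{p^{r+t}} = 1,\; Y^{p^r} = X^{p^{r+s'}},\; [X, Y] = X^{p^t}}, \]
setting $V := X^{p^t}$. Using $Y^{-1} X Y = X^{1+p^t}$ one computes $[V, Y] = V^{p^t}$, so $\gamma_2(K) = \gen{V}$ is cyclic of order $p^r$ and $\gamma_3(K) = \gen{V^{p^t}}$. Choosing the parameter $t := s$ makes the commutator action of $Y$ on $V$ match the commutator action of $y$ on $u$ in $G$; the hypothesis $|\gamma_2(G)/\gamma_3(G)| > 2$ (with $s \ge 2$ when $p = 2$) is exactly what is required so that $(r, t, s') = (r, s, s')$ falls into the allowed range $1 \le t < r$, $0 \le s' \le t$ (with $t \ge 2$ if $p = 2$) in the definition of \eqref{Intgrp1}.

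With the invariants $r$ and $t$ thus fixed, I would choose $s'$ (and if necessary pass to a stem group of $G$ first, using Lemma \ref{prelemma1}, to eliminate the nuisance parameters coming from $m, n, i, j, k$) so that the order of $Y$ modulo $\Z(K)$ matches the order of $y$ modulo $\Z(G)$, and so that the relation $Y^{p^r} = X^{p^{r+s'}}$ mirrors the image modulo the center of $y^{p^n} = x^j u^k$. The isoclinism $(\phi, \theta)$ is then the obvious one: $\phi$ sends $x\Z(G) \mapsto X\Z(K)$ and $y\Z(G) \mapsto Y\Z(K)$, and $\theta$ sends $u \mapsto V = X^{p^t}$. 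Commutativity with the commutator map need only be checked on generators, where $\theta([x, y]) = \theta(u) = V = [X, Y]$ by construction.

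The main obstacle will be the bookkeeping in step three: ensuring that no relation in $G/\Z(G)$ obstructs the map $\phi$ from being a well-defined isomorphism for some choice of $s'$. The cleanest way around this is to reduce to a stem group of $G$ via Lemma \ref{prelemma1} (which is itself Camina-type and $2$-generator with cyclic commutator subgroup of order $p^r$), so that the exponents $m, n$ and the powers $i, j, k$ in the presentation of the stem group are forced down to the minimal values compatible with $|\gamma_2|$ and the action of $y$ on $u$; at that point only the parameter $s'$ remains to be matched, and it is determined by the isomorphism type of $G/\Z(G)$.
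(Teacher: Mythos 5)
Your overall strategy (pass to a stem group, read off the isoclinism invariants, and match them against a group \eqref{Intgrp1}) is workable, but the proposal defers to ``bookkeeping'' exactly the step that carries the content of the theorem, and the remedy you offer for it does not supply that step. The crux is this: writing $x$ for the generator with $x \in \C_G(\gamma_2(G))$ and $\gamma_2(G) = \gen{u}$ of order $p^r$, you must show that in $G/\Z(G)$ the image $\bar u$ lies in $\gen{\bar x}$ and that $\bar x, \bar y$ both have order $p^r$, so that $G/\Z(G)$ is the split metacyclic group $\C_{p^r} \rtimes \C_{p^r}$ with action $\bar x \mapsto \bar x^{1+p^s}$; only then is the assignment $\bar x \mapsto \bar X$, $\bar y \mapsto \bar Y$ a well-defined isomorphism onto $K/\Z(K)$. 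Your claim that passing to a stem group ``forces $m,n,i,j,k$ down to minimal values'' is an unproved assertion of precisely this fact ($\Z(H) \le \gamma_2(H)$ alone does not give it), and your final parameter $s'$ is a red herring: $K/\Z(K)$, $\gamma_2(K)$ and the commutator pairing of $K$ are all independent of $s'$, so every admissible $s'$ yields an isoclinic $K$ and nothing is ``determined by the isomorphism type of $G/\Z(G)$.'' The missing step is where the Camina-type hypothesis (equivalently, by Corollary \ref{2gcor1}, the equality $|G/\Z(G)| = |\gamma_2(G)|^2 = p^{2r}$) must actually be used; your proposal never invokes it at the decisive moment. It can be closed inside your framework, e.g.\ by noting that $[x^{p^s}, y] = u^{p^s} = [u, y]$, whence $x^{-p^s}u$ centralizes both $x$ and $y$, so $\bar u = \bar x^{p^s}$ and $\C_G(x) = \gen{x}\Z(G)$ has index $p^r$ --- but that computation, not the matching of $s'$, is the heart of the matter.

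The paper's proof sidesteps all of the parameter-matching: it works directly in a stem group $K$ and shows that $K$ \emph{is} a group of the form \eqref{Intgrp1}. The decisive step there is the count $|K/\Z(K)| = |K/\gamma_2(K)|\,|\gamma_2(K)/\Z(K)| = p^{r+s_1}p^{r-t}$, where $p^{s_1}$ is the least power of $x$ lying in $\gamma_2(K)$ and $p^t = |\Z(K)|$; equating this with $p^{2r}$ (Hypothesis A) forces $s_1 = t$, hence $\gamma_2(K) = \gen{x^{p^t}} \le \gen{x}$, so $K$ is metacyclic with exactly the presentation \eqref{Intgrp1}, and the isoclinism of $G$ with $K$ is automatic. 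If you keep your route, you must either reproduce this counting argument or carry out the centrality computation above; as written the proposal is incomplete at precisely that point.
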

\begin{proof}
Let $G$ be a group as in the statement. Then Corollary \ref{2gcor1} tells that $G$ satisfies Hypothesis A and $\gamma_2(G)$ is cyclic.  Set $|\gamma_2(G)| = p^r$.  Let $K$ be a stem group in the isoclinism family of $G$. Then $\Z(K) \le \gamma_2(K)$ and $K$ satisfies Hypothesis A, and therefore $|K/\Z(K)| = |\gamma_2(K)|^2$.  Let us  assume that $K$ is generated by $x$ and $y$. Since $\gamma_2(K)$ is cyclic, we can assume, without loss of any generality,  that  $x$ commutes with the generator $[x, y]$ of $\gamma_2(K)$.   Since $[x, K] = \gamma_2(K)$ and $\gamma_2(K) \le \C_K(x)$, it follows that  no power of $y$ smaller than $p^r$ can land in $\Z(K)$, and therefore no such power of $y$ can lie in $\gamma_2(K)$. Let us see what can lie in $\gamma_2(K)$.  Let $x^iy^j \in \gamma_2(K)$. Since $\gamma_2(K) \le \C_K(x)$, it follows that $x$ commutes with $x^iy^j$. This tells that $[x, y^j] = 1$, and therefore $y^j \in \Z(K)$. Hence $j$ must be either $0$ or some power of $p^r$. So it follows that modulo $\Z(K)$, anybody from $K - K^{p^r}$ that can lie in $\gamma_2(K)$ must be some power of $x$. Suppose that $x^{p^{s_1}}$ is the smallest power of $x$ that  lies in $\gamma_2(K)$. We know for sure that this  $s_1 \le r$, since $\Z(K) \le \gamma_2(K)$. Set $|\Z(K)| = p^t$. Notice that 
\[|K/\Z(K)| = |K/\gamma_2(K)||\gamma_2(K)/\Z(K)| =    p^{r+s_1}p^{r-t} = p^{2r+s_1-t},\]
since $|K/\gamma_2(K)| = p^{r+s_1}$ and $|\gamma_2(K)/\Z(K)| = p^{r-t}$.
But we know that $K/\Z(K)$ is of order $p^{2r}$. Hence the preceding equation implies that  $s_1 = t$. Since the order of $x$ modulo $\Z(K)$ is $p^r$, it follows that $x^{p^t}$ generates $\gamma_2(K)$. This proves that $K$ is metacyclic. Order of $x$ is $p^{r+t}$, $y^{p^r} = x^{p^{r+s}}$ and $[x, y] = x^{p^t}$, for some non-negative integer $s$. If $r =1$, then $|\gamma_2(K)| = p$, and therefore nilpotency class of $K$ is $2$, which we are  not considering. Hence $r \ge 2$. It is now clear that $1 \le t < r$ and $0 \le s \le t$.  Indeed, if $t = r$, then $\Z(K) = \gamma_2(K)$, which we are not considering.  This completes the proof of the  theorem.\hfill $\Box$

\end{proof}

As a consequence of the preceding theorem and  Lemma \ref{2glemma5} we get
\begin{cor}
Let $G$ be a $2$-generator finite $p$-group of nilpotency class at least $3$ such that $\gamma_2(G)$ is cyclic and  $|\gamma_2(G)/\gamma_3(G)| > 2$.  Then $G$ is isoclinic to a metacyclic $p$-group.
\end{cor}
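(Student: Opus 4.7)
The plan is to chain the two preceding results: Lemma \ref{2glemma5} and Theorem \ref{2gthm1}. The hypotheses of the corollary match exactly the hypotheses of Lemma \ref{2glemma5}, which upgrades the setup from ``$\gamma_2(G)$ cyclic'' to ``$G$ satisfies Hypothesis A, hence is Camina-type.'' Once $G$ is known to be a $2$-generator Camina-type $p$-group of class at least $3$ with $|\gamma_2(G)/\gamma_3(G)|>2$, Theorem \ref{2gthm1} applies and gives that $G$ is isoclinic to one of the groups $K$ defined in \eqref{Intgrp1}.

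So my proof would be just a two-line deduction. First, invoke Lemma \ref{2glemma5} to conclude that $G$ is Camina-type (this is where the hypothesis $|\gamma_2(G)/\gamma_3(G)|>2$ is really needed, since it allows the use of Lemma \ref{yC82} inside Lemma \ref{2glemma5} in both odd and even characteristic). Second, apply Theorem \ref{2gthm1} to produce an isoclinism of $G$ with the group $K$ of \eqref{Intgrp1}. The groups $K = \langle x, y \mid x^{p^{r+t}} = 1,\ y^{p^r} = x^{p^{r+s}},\ [x,y] = x^{p^t}\rangle$ are manifestly metacyclic, being generated by $x$ and $y$ with $\langle x\rangle$ normal (since $[x,y]\in\langle x\rangle$) and $G/\langle x\rangle$ cyclic. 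This yields the conclusion.

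There is no real obstacle here, since the substantive content has been packaged into Lemma \ref{2glemma5} (which gave Hypothesis A from the cyclicity assumption) and Theorem \ref{2gthm1} (which identified a stem group as metacyclic of the form \eqref{Intgrp1}). The only thing to check is that the conclusion of Theorem \ref{2gthm1} is genuinely a metacyclic group in the usual sense, which is immediate from the presentation \eqref{Intgrp1}. Thus the corollary is essentially a reformulation of Theorem \ref{2gthm1} stripped of the specific parameter information, and its proof amounts to one sentence citing these two results.
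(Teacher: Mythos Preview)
Your proposal is correct and matches the paper's approach exactly: the paper states the corollary as an immediate consequence of Lemma \ref{2glemma5} and Theorem \ref{2gthm1}, precisely the two-step chain you describe. Your added remark that the presentation \eqref{Intgrp1} is visibly metacyclic is a helpful clarification but not strictly needed, since the paper has already noted this in the introduction.
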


We conclude this section with the following specific non-metacylic $p$-group satisfying Hypothesis A. For an odd prime integer $p$, consider the group
\begin{eqnarray*}
G &=& \langle x, y, u \mid x^{p^2} = y^{p^2} = u^{p^2} = 1, [x, y] = u, [u, x] = u^p, [u, y] = 1\rangle.
\end{eqnarray*}
It is not difficult to show that $G$ is a $2$-generator group of order $p^6$, the nilpotency class of $G$ is $3$,  $\gamma_2(G)$ and $\Z(G)$ are distinct cyclic subgroups of order $p^2$ and $\gamma_3(G)$ is of order $p$.

Since $\gamma_2(G)$ is cyclic, $\Aut_c(G) = \Inn(G)$. Also $|\Inn(G)| = |G/\Z(G)| = p^4 = |\gamma_2(G)|^2$. Hence $G$ satisfies Hypothesis A. But $G$ is not metacyclic.

\section{$2$-generator $2$-groups $G$ with $|\gamma_2(G)/\gamma_3(G)| = 2$}

In this section we classify  finite $2$-generator $2$-groups $G$ satisfying Hypothesis A and with $|\gamma_2(G)/\gamma_3(G)| = 2$. These results are proved jointly with Mike  Newman. It is easy to prove the following two results using computer algebra system Magma \cite{BCP}. For  a given group $G$, let $Y(G)$ denote the normal subgroup $\gamma_4(G)(\gamma_2(G))^2$ of $G$.

\begin{lemma}\label{NYlemma1}
Let $G$ be a finite $2$-generator $2$-group satisfying Hypothesis A and  $|\gamma_2(G)/\gamma_3(G)| = 2$. 
 If the order of $G$ is at most $2^8$, then $Y(G) =  1$.

\end{lemma}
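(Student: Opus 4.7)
The plan is to carry out the verification directly by computer, as announced in the sentence preceding the lemma. The reduction to a finite check is automatic: for each $n$ with $3 \le n \le 8$ the SmallGroups library enumerates all groups of order $2^n$, so the set of $G$ satisfying the hypotheses is a concrete finite list to iterate over. Moreover, $Y(G) = 1$ decomposes as the conjunction $\gamma_4(G) = 1$ and $(\gamma_2(G))^2 = 1$, so the conclusion is simply that every $G$ satisfying the hypotheses has nilpotency class at most $3$ and elementary abelian commutator subgroup.

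First, I would loop over $n$ from $3$ to $8$ and over the SmallGroups of order $2^n$, applying the filters in the following order: (a) $d(G) = 2$, equivalently $|G/\Phi(G)| = 4$; (b) $|\gamma_2(G)/\gamma_3(G)| = 2$; and (c) $|\Aut_c(G)| = |\gamma_2(G)|^2$, which is Hypothesis A. For each $G$ passing all three filters I would then check that $\gamma_4(G) = 1$ and $(\gamma_2(G))^2 = 1$, which together give $Y(G) = 1$. Cheap filters are applied first so that the expensive test (c) is invoked only on a small number of candidates.

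The nonroutine step is computing $|\Aut_c(G)|$ in Magma, since this quantity is not returned by a built-in command. The cleanest way is to build $\Aut(G)$ using \texttt{AutomorphismGroup}, realise its natural permutation action on the set of conjugacy classes of $G$, and take $\Aut_c(G)$ to be the kernel of that action; $|\Aut_c(G)|$ is then a routine index computation. The inner filter $|\Aut_c(G)| = |\gamma_2(G)|^2$ can be further sharpened by first testing $|\Aut_c(G)| \ge |G/\Z(G)|$ via $\Inn(G) \le \Aut_c(G)$, which costs nothing.

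The main obstacle I expect is the cost of \texttt{AutomorphismGroup} for $2$-groups of order $128$ and $256$, where $\Aut(G)$ can be very large. The saving grace is that imposing $d(G) = 2$ together with $|\gamma_2(G)/\gamma_3(G)| = 2$ cuts the SmallGroups list down to a rather small subfamily, so the expensive call to \texttt{AutomorphismGroup} is needed only on a modest number of candidates. Once the candidate list has been processed and $Y(G) = 1$ verified for every surviving group, the lemma is established.
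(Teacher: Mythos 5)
Your proposal is correct and coincides with the paper's approach: the paper gives no written proof of this lemma, stating only that it ``is easy to prove\dots using computer algebra system Magma,'' and your plan (enumerate the SmallGroups of order $2^3$ through $2^8$, filter by $d(G)=2$, $|\gamma_2(G)/\gamma_3(G)|=2$ and Hypothesis A, compute $\Aut_c(G)$ as the kernel of the action of $\Aut(G)$ on conjugacy classes, then verify $\gamma_4(G)=1$ and $\gamma_2(G)^2=1$) is precisely the intended verification. Your identification of $\Aut_c(G)$ with that kernel is the correct characterisation, so nothing is missing.
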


\begin{lemma}\label{NYlemma2}
Let $G$ be a  finite $2$-generator $2$-group such that the nilpotency class of $G$ is at most $3$,  order of $G$ is at most $2^7$ and $\gamma_2(G)$ is elementary abelian of order at most $8$. Then $G$ satisfies Hypothesis A. Furthermore, $\Aut_c(G) = \Inn(G)$ if and only if $|\gamma_2(G)| \le 4$.
\end{lemma}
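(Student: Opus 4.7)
The plan is a direct computer-algebra verification, as the hypotheses carve out an explicitly bounded finite family of groups. The relevant list consists of all $2$-generator $2$-groups $G$ of order at most $2^7$, of nilpotency class at most $3$, with $\gamma_2(G)$ elementary abelian of order at most $8$; this list is easily extracted from Magma's SmallGroups database by filtering on $d(G) = 2$, on the nilpotency class, and on the isomorphism type of $\gamma_2(G)$. Since $|G/\gamma_2(G)| = |G|/|\gamma_2(G)|$ and $G/\gamma_2(G)$ is a $2$-generator abelian $2$-group, the resulting list is short and every member can be handled in turn.

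For each $G$ on the list, I would perform two computations. First, compute $|\Aut_c(G)|$ directly -- for instance by fixing a minimal generating set $\{x_1, x_2\}$, enumerating pairs $(y_1, y_2) \in x_1^G \times x_2^G$, and testing via the power-commutator presentation which such pairs extend to an automorphism of $G$ -- and verify $|\Aut_c(G)| = |\gamma_2(G)|^2$; this establishes Hypothesis A. Second, compute $|G/\Z(G)| = |\Inn(G)|$ and compare with $|\gamma_2(G)|^2$: since $\Inn(G) \le \Aut_c(G)$ and $|\Aut_c(G)| = |\gamma_2(G)|^2$ by the first step, the equality $\Aut_c(G) = \Inn(G)$ is equivalent to $|G/\Z(G)| = |\gamma_2(G)|^2$. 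One then reads off from the tabulated output that this equivalence occurs precisely when $|\gamma_2(G)| \le 4$.

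The only potential obstacle is bookkeeping: one must ensure the enumeration is exhaustive across orders $2^2, \ldots, 2^7$ and all admissible abelian structures of $\gamma_2(G)$, and that the Magma routine for class-preserving automorphisms is implemented faithfully. A useful sanity check is that every enumerated $G$ with cyclic $\gamma_2(G)$ satisfies $\Aut_c(G) = \Inn(G)$ by \cite[Corollary~3.6]{mY08}; the agreement between this theoretical prediction and the Magma output provides confidence in the computation. For the cases $|\gamma_2(G)| = 8$ where the lemma asserts $\Aut_c(G) \ne \Inn(G)$, one expects the Magma output to exhibit a class-preserving automorphism that fails to be inner, whose non-innerness can be certified by verifying that no element of $G$ conjugates the two generators in the prescribed way.
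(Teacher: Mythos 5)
Your proposal is exactly the approach the paper takes: the paper simply states that Lemmas 8.1 and 8.2 are verified by an exhaustive computation in Magma over the finitely many groups satisfying the hypotheses, and your enumeration-plus-direct-computation of $\Aut_c(G)$ (with the observation that, given Hypothesis A, $\Aut_c(G)=\Inn(G)$ is equivalent to $|G/\Z(G)|=|\gamma_2(G)|^2$) is a faithful elaboration of that verification.
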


\begin{thm}\label{NYthm}
Let $G$ be a finite $2$-generator $2$-group such that  $|\gamma_2(G)/\gamma_3(G)| = 2$. Then $G$ satisfies Hypothesis A if and only if the nilpotency class of $G$ is at most $3$ and $\gamma_2(G)$ is elementary abelian of order at most $8$.
\end{thm}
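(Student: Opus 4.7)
\emph{Sufficiency.} Assume the class of $G$ is at most $3$ and $\gamma_2(G)$ is elementary abelian of order at most $8$. By Lemma \ref{prelemma}, I may replace $G$ by a stem group $H$ in its isoclinism family, so that $\Z(H) \le \gamma_2(H)$ and $|\Z(H)| \le 8$. For any $g, h \in H$, since $\gamma_2(H)$ has exponent $2$ and the class is at most $3$, one computes $[g^2, h] = [g,h]^2 [g,h,g] = [g,h,g] \in \gamma_3(H)$, and then $[g^4, h] = [g^2, h]^2 = 1$; so $g^4 \in \Z(H)$ for every $g \in H$. Taking generators $x, y$ of $H$, the quotient $H/\gamma_2(H)$ is a $2$-generated abelian group of exponent dividing $4$, so $|H/\gamma_2(H)| \le 16$ and $|H| \le 16 \cdot 8 = 2^7$. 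Lemma \ref{NYlemma2} then gives Hypothesis A for $H$, and hence (via Lemma \ref{prelemma}) for $G$.

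\emph{Necessity.} Set $Y := \gamma_4(G)(\gamma_2(G))^2$; the goal is $Y = 1$. Since $|\gamma_2(G)/\gamma_3(G)| = 2$ forces $\gamma_2(G)/\gamma_3(G)$ to have exponent $2$, we get $(\gamma_2(G))^2 \le \gamma_3(G)$, and so $Y \le \gamma_3(G)$. This is the key structural fact enabling the descent: for any $N \normal G$ with $N \le Y$, the quotient $G/N$ is $2$-generated (as $N \le \Phi(G)$), has $|\gamma_2(G/N)/\gamma_3(G/N)| = 2$, satisfies Hypothesis A by Lemma \ref{lemma1}, and $Y(G/N) = Y/N$.

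Suppose for contradiction $Y \ne 1$, and take $G$ of minimum order with this property. If $|Y| \ge 4$, then $Y \cap \Z(G) \ne 1$ because $Y$ is a non-trivial normal subgroup of the nilpotent $2$-group $G$; quotienting by an order-$2$ subgroup $N \le Y \cap \Z(G)$ yields a strictly smaller counterexample, contradicting minimality. Hence $|Y| = 2$ and $Y \le \Z(G)$. Now $G/Y$ satisfies the hypotheses with $Y(G/Y) = 1$, so the already-proven sufficiency applied to $G/Y$ gives class $\le 3$ and $\gamma_2(G/Y)$ elementary abelian of order $\le 8$; thus $|\gamma_2(G)| \le 16$. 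Combined with $|G/\Z(G)| \le |\Aut_c(G)| = |\gamma_2(G)|^2 \le 256$ from Hypothesis A, and a case analysis on whether $Y = \gamma_4(G)$ (so class $4$) or $Y = (\gamma_2(G))^2$ with $\gamma_4(G) = 1$ (so class $3$ but $\gamma_2(G)$ not elementary abelian), one bounds $|\Z(G)|$, using that $\gamma_3(G)/\gamma_4(G)$ is generated by the two commutators $[[x,y], x]$, $[[x,y], y]$ modulo $\gamma_4(G)$ and has exponent $2$. Passing to a stem group in each case gives $|G| \le 2^8$, and Lemma \ref{NYlemma1} then forces $Y(G) = 1$, contradicting $|Y| = 2$.

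The principal obstacle is this final order bound in the boundary case $|Y| = 2$: organizing the case analysis $Y = \gamma_4(G)$ vs.\ $Y = (\gamma_2(G))^2$ and extracting a tight enough bound on $|\Z(G)|$ --- essentially by reapplying the $g^4 \in \Z$ computation from the sufficiency in the quotient $G/Y$ to control the top quotient $G/\gamma_2(G)$ --- so that the descent lands at or below order $2^8$ where Lemma \ref{NYlemma1} takes over.
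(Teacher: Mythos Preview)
Your proposal follows the same architecture as the paper: reduce to a situation with $|Y| = 2$, bound the order of a stem group by $2^8$, and invoke Lemma~\ref{NYlemma1}; the sufficiency is identical to the paper's. A few remarks on the necessity, where you take a more circuitous route.

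The paper avoids the minimal-counterexample framework entirely: it simply chooses a maximal normal subgroup $M$ of $Y(G)$, sets $H=G/M$ (so $|Y(H)|=2$ automatically), observes that $|\gamma_2(H)|\le 16$ and $\bar x^4,\bar y^4\in\Z(H)$, and passes to the stem group $K$ of $H$. Then $K/\gamma_2(K)$ is $2$-generated abelian of exponent dividing $4$, so $|K|\le 16\cdot 16=2^8$, and Lemma~\ref{NYlemma1} finishes. No case analysis on whether $Y$ equals $\gamma_4$ or $(\gamma_2)^2$ is needed.

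Your attempt to bound $|\Z(G)|$ instead of $|G/\gamma_2(G)|$ is what forces the case split and leaves the argument incomplete: in a stem group $|\Z(G)|\le|\gamma_2(G)|\le 16$ gives only $|G|\le 2^{12}$, which is too large. The ``$g^4\in\Z$'' computation you flag as the principal obstacle is precisely what replaces the case analysis and gives the bound directly on $|G/\gamma_2(G)|$; once you carry it out (for class $\le 4$ with $(\gamma_2)^2$ central of order $\le 2$) the proof is complete.

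One minor slip: the implication ``$Y(G/Y)=1\Rightarrow$ class $\le 3$ and $\gamma_2(G/Y)$ elementary abelian of order $\le 8$'' is not the sufficiency direction; it is immediate from the definition of $Y$ together with the fact that $\gamma_3/\gamma_4$ is generated by two elements of order $2$.
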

\begin{proof}
Let $G$ be a group as in the statement, which is generated by $x$ and $y$. Suppose that $G$ satisfies Hypothesis A.  To prove the `only if' part of the statement, it is sufficient to show that $Y(G) = 1$. Contrarily assume that  $Y(G) \neq 1$. Then, $G$ being a finite $2$-group, there exists a subgroup $M$ of index $2$ in $Y(G)$ which is normal in $G$. Now consider the quotient group $H = G/M$. Observe that $|Y(H)| = 2$ (for, trivial  $Y(H)$ implies  $Y(G) = M$, which is not true), and the order and exponent of $\gamma_2(H)$ are at most  $16$ and $4$ respectively. Now it follows that both ${\bar x}^4$ and ${\bar y}^4$ lie in $\Z(H)$, where ${\bar x} = x M$ and ${\bar y } = yM$ generate $H$. Let $K$ be a stem group in the isoclinism family of  $H$. Then $K$ is of order at most $2^8$ and $Y(K) \neq 1$. Also $K$ is $2$-generator and satisfies Hypothesis A (by Lemma \ref{prelemma}), and $|\gamma_2(K)/\gamma_3(K)| = 2$. This is not possible by Lemma \ref{NYlemma1}. Hence $Y(K)$ must be trivial, and therefore so is $Y(G)$. 

Conversely, assume that the nilpotency class of $G$ is at most $3$ and $\gamma_2(G)$ is elementary abelian of order at most $8$. Let $K$ be a stem group in the isoclinism family of $G$. Since both $x^4$ and $y^4$ lie in $\Z(G)$, it follows that the order of $K$ is at most $2^7$.  Since $\Z(K) \le \gamma_2(K)$, we have $d(K) = d(G) = 2$.  Then Lemma \ref{NYlemma2} tells that $K$ satisfies Hypothesis A. Since $\Aut_c(G) \cong \Aut_c(K)$ (by Theorem \ref{mY08}), it now follows that $G$ satisfies Hypothesis A. This completes the proof of the theorem. \hfill $\Box$

\end{proof}

The existence  of $2$-generator $2$-groups $G$ satisfying Hypothesis A, having nilpotency classes $2$ and $3$, and having orders $32$, $64$, $128$, $256$ and even higher with $|\gamma_2(G)/\gamma_3(G)| = 2$  can be easily  established using Magma.

Using second assertion of Lemma \ref{NYlemma2}, the following result follows from Theorem \ref{NYthm}.

\begin{prop}\label{NYprop1}
Let $G$ be a finite $2$-generator $2$-group of nilpotency class at least $3$ such that  $|\gamma_2(G)/\gamma_3(G)| = 2$ and $\Aut_c(G) = \Inn(G)$. Then $G$ satisfies Hypothesis A if and only if the nilpotency class of $G$ is  $3$ and $\gamma_2(G)$ is elementary abelian of order $4$.
\end{prop}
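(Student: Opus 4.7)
The reverse implication is an immediate consequence of Theorem \ref{NYthm}: the hypotheses ``class $3$'' and ``$\gamma_2(G)$ elementary abelian of order $4$'' fit inside the hypotheses ``class at most $3$'' and ``$\gamma_2(G)$ elementary abelian of order at most $8$'' of Theorem \ref{NYthm}, so $G$ satisfies Hypothesis A. Note that $\Aut_c(G)=\Inn(G)$ is not used for this direction, so the content of the proposition lies entirely in the forward implication.

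Suppose then that $G$ satisfies Hypothesis A. Theorem \ref{NYthm} forces the nilpotency class of $G$ to be exactly $3$ and $\gamma_2(G)$ to be elementary abelian of order at most $8$; combined with $|\gamma_2(G)/\gamma_3(G)|=2$ and $\gamma_3(G)\ne 1$ this leaves $|\gamma_2(G)|\in\{4,8\}$. The plan is to eliminate $|\gamma_2(G)|=8$ by passing to a stem group and then invoking the second assertion of Lemma \ref{NYlemma2}.

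Let $K$ be a stem group in the isoclinism family of $G$. Lemma \ref{prelemma} yields that $K$ is $2$-generator of class $3$, satisfies Hypothesis A, and has $\gamma_2(K)\cong\gamma_2(G)$ elementary abelian of the same order; Theorem \ref{mY08} then gives $\Aut_c(K)\cong\Aut_c(G)=\Inn(G)\cong\Inn(K)$, so $\Aut_c(K)=\Inn(K)$. The crucial step is to show $|K|\le 2^7$. For a generating pair $\{x,y\}$ of $K$, the commutator identity $[x^2,y]=[x,y]^2[x,y,x]$ collapses (using $\gamma_2(K)$ abelian of exponent $2$) to $[x^2,y]=[x,y,x]\in\gamma_3(K)\le\Z(K)$, whence $[x^4,y]=[x^2,y]^2=1$; as $[x^4,x]=1$ also, we obtain $x^4\in\Z(K)$ and symmetrically $y^4\in\Z(K)$. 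Because $K$ is stem we have $\Z(K)\le\gamma_2(K)$, so $K/\gamma_2(K)$ is $2$-generator abelian with each generator of order at most $4$, giving $|K/\gamma_2(K)|\le 16$ and hence $|K|\le 16\cdot|\gamma_2(K)|\le 128=2^7$.

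With the bound $|K|\le 2^7$ in hand, Lemma \ref{NYlemma2} applies to $K$, and its second assertion together with $\Aut_c(K)=\Inn(K)$ forces $|\gamma_2(K)|\le 4$. Therefore $|\gamma_2(G)|=|\gamma_2(K)|=4$, completing the forward implication. The main obstacle in this plan is the order estimate $|K|\le 2^7$: the rest is a direct application of results already assembled in the paper, but producing that bound requires both the class-$3$ commutator identity forcing $x^4,y^4\in\Z(K)$ and the stem-group inclusion $\Z(K)\le\gamma_2(K)$.
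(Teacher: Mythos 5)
Your proof is correct and follows essentially the same route as the paper, which derives the proposition from Theorem \ref{NYthm} together with the second assertion of Lemma \ref{NYlemma2}. The only detail the paper leaves implicit is the reduction to a stem group $K$ with $|K|\le 2^7$ (needed to legitimately invoke Lemma \ref{NYlemma2}), and your argument for that bound via $x^4,y^4\in\Z(K)\le\gamma_2(K)$ is exactly the one already used in the paper's proof of Theorem \ref{NYthm}.
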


\section{Camina-type $p$-groups: Macdonald's argument}

We begin with some standard observations (which can be found in \cite{eK98}) about an arbitrary prime $p$ and an arbitrary finite $p$-group $G$. In this situation there is a natural graded Lie ring

\begin{equation}\label{MAeq1a}
L = L_1 \oplus L_2 \oplus \ldots
\end{equation}
with $i$-th component 
\begin{equation}\label{MAeq1b}
L_i = (\gamma_i(G)/\gamma_{i+1}(G))^+,
\end{equation}
for any integer $i \ge 1$, is the abelian section $\gamma_i(G)/\gamma_{i+1}(G)$ of $G$ rewritten as an additive group instead of multiplicative one. So any element $x_i \in L_i$ is a coset $x\gamma_{i+1}(G)$ for some $x \in \gamma_i(G)$.  Similarly, any $y_j \in L_j$, where $j = 1, 2, \ldots$, is a coset $y \gamma_{j+1}(G)$ for some $y \in \gamma_j(G)$. Then the Lie product $[x_i, x_j]$ in the Lie ring $L$ is the coset
\begin{equation}\label{MAeq1c}
[x_i, x_j] = [x, y]\gamma_{i+j+1}(G) \in L_{i+j}
\end{equation}
of the commutator $[x, y] \in \gamma_{i+j}(G)$ of $x$ and $y$ in $G$. This product is well defined, and turns $L$ into a Lie ring. The usual equation $\gamma_{i+1}(G) = [\gamma_i(G), G]$ for $i = 1, 2, \ldots$ implies that
\begin{equation}\label{MAeq1d}
L_{i+1} = [L_i, L]
\end{equation}
for all integers $i \ge 1$. Hence the additive subgroup $L_1$ generates the Lie ring $L$. Finally, the commutator equation $[x, x] =1$ for all $x \in G$ implies that
\begin{equation}\label{MAeq1e}
[x_i, x_i] = 0
\end{equation}
in $L$ for any $i \ge 1$ and any $x_i \in L_i$. This and the standard Lie identity $[x, y] = -[y, x]$ for all $x, y \in L$ easily imply that $[x, x] = 0$ for all $x \in L$. 

Since each component $L_i$ of $L$ is a finite additive $p$-group, its Frattini subgroup $\Phi(L_i)$ is just $pL_i$. The factor group 
\begin{equation}\label{MAeq2a}
{\bar L}_i := L_i/pL_i = L_i/\Phi(L_i)
\end{equation}
is now a vector space over the finite field ${\mathbb F}_p$ of $p$ elements. After natural identifications, this vector space is the $i$-th component in the graded Lie algebra 
\begin{equation}\label{MAeq2b}
{\bar L} := L/pL = (L_1/pL_1) \oplus  (L_2/pL_2) \oplus \ldots = {\bar L}_1 \oplus {\bar L}_2 \ldots
\end{equation}
over the field ${\mathbb F}_p$.  The equation \eqref{MAeq1d} implies that
\begin{equation}\label{MAeq2c}
{\bar L}_{i+1} = [{\bar L}_i, {\bar L}_1]
\end{equation}
for all $i \ge 1$. Hence the subspace ${\bar L}_1$ generates the Lie algebra ${\bar L}$ over ${\mathbb F}_p$. Finally, the equation \eqref{MAeq1e} implies that
\begin{equation}\label{MAeq2d}
[{\bar x}_i, {\bar x}_i] = 0
\end{equation}
in ${\bar L}$ for all $x_i \in {\bar L}_i$, where $i \ge 1$.

For a vector space $V$ over the finite field ${\mathbb F}_p$, $\text{dim}(V)$ denotes the dimension of $V$. For the rest of this section, for the Lie algebra ${\bar L}$, we assume that 
\begin{eqnarray}
\label{MAeq3a} \text{dim}({\bar L}_3) & = & 1 \quad \text{ and}\\
\label{MAeq3b} [{\bar L}_1, {\bar y}] & = & {\bar L}_2 \quad \text{for all } {\bar y} \in {\bar L}_1 - \{0\}.
\end{eqnarray} 
Following the notation of Macdonald in \cite{iM81}, we denote by $m$ and $n$ the dimensions
\begin{eqnarray}
\label{MAeq4a} \text{dim}( {\bar L}_1) & = & m \quad \text{and}\\
\label{MAeq4b} \text{dim}( {\bar L}_2) & = & n
\end{eqnarray}
of ${\bar L}_1$ and ${\bar L}_2$ respectively, over ${\mathbb F}_p$. It follows from \eqref{MAeq2c} and \eqref{MAeq3a} that $[{\bar L}_2, {\bar L}_1] = {\bar L}_3$ is a non-zero vector space over ${\mathbb F}_p$. Hence neither ${\bar L}_1$ nor  ${\bar L}_2$ can be zero and therefore their dimensions $m$ and $n$ satisfy
\begin{equation}\label{MAeq4c}
m, n > 0.
\end{equation}

An $m \times m$ square matrix $A = (a_{i,j})$ over $\mathbb{F}_p$ is called \textit{strongly skew-symmetric} if $a_{i,j} = - a_{j,i}$ whenever $i \neq j$ and $a_{i,i} = 0$ for all $i$. Notice that when $p$ is odd, a matrix is strongly skew-symmetric if and only if it is skew-symmetric. But when $p = 2$, a matrix is skew-symmetric if and only if it is symmetric, which is not true for strongly skew-symmetric matrices. Thus the notion of strongly skew-symmetric matrix is different from a skew-symmetric one when $p = 2$. Important thing here is that Pfaffians exist only for strongly skew-symmetric matrices, which is a crucial ingredient in the following proof. 
   
Now we are going to reproduce  Macdonald's arguments \cite[pages 353-355]{iM81} in additive setup to prove the following.
\begin{lemma}\label{MAlemma0} 
If $m$ and $n$  are as above, then $m$ is even and $m \ge 2n$.
\end{lemma}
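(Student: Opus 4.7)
The plan is to translate Macdonald's argument for Camina groups of class $2$ into the Lie algebra setting. The essential data are the alternating bilinear forms on $\bar{L}_1$ parameterised by $\bar{L}_2^*$, together with the one-dimensionality of $\bar{L}_3$, which allows us to exploit Pfaffians of strongly skew-symmetric matrices.

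First I would fix a basis $f_1, \ldots, f_n$ of $\bar{L}_2^*$ and, for each $k$, define $B_k : \bar{L}_1 \times \bar{L}_1 \to \mathbb{F}_p$ by $B_k(\bar{x}, \bar{y}) = f_k([\bar{x}, \bar{y}])$. Each $B_k$ is alternating by \eqref{MAeq2d}. For any non-zero $c = (c_1, \ldots, c_n) \in \mathbb{F}_p^n$, the form $B_c := \sum_k c_k B_k$ is non-degenerate: if some $\bar{y} \neq 0$ satisfied $B_c(\bar{x}, \bar{y}) = 0$ for all $\bar{x}$, then the non-zero functional $\sum_k c_k f_k \in \bar{L}_2^*$ would vanish on $[\bar{L}_1, \bar{y}]$, which by \eqref{MAeq3b} equals $\bar{L}_2$, a contradiction. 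Since a non-degenerate alternating form exists only on an even-dimensional space, $m$ must be even.

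For the inequality $m \geq 2n$, I would fix a basis of $\bar{L}_1$ and let $A_k$ denote the matrix of $B_k$; each $A_k$ is a strongly skew-symmetric $m \times m$ matrix over $\mathbb{F}_p$. Consider $M(t) := \sum_k t_k A_k$ over the polynomial ring $\mathbb{F}_p[t_1, \ldots, t_n]$. Since $m$ is even, the Pfaffian $P(t) := \mathrm{Pf}(M(t))$ is a well-defined homogeneous polynomial of degree $m/2$ in the variables $t_1, \ldots, t_n$, satisfying $\det M(t) = P(t)^2$. The non-degeneracy established above translates to $P(c) \neq 0$ for every $c \in \mathbb{F}_p^n \setminus \{0\}$. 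Now suppose, for contradiction, that $m < 2n$; then $\deg P = m/2 < n$, so by the Chevalley-Warning theorem applied to the single homogeneous polynomial $P$ in $n$ variables, the $\mathbb{F}_p$-zero set of $P$ has cardinality divisible by $p$. Since this set contains the origin, it must contain at least $p - 1 \geq 1$ further points, contradicting the non-degeneracy of $B_c$ for all non-zero $c$. Hence $m \geq 2n$.

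The main technical obstacle is ensuring the Pfaffian step works uniformly over every prime $p$, most notably $p = 2$, where ``skew-symmetric'' and ``symmetric'' coincide. The paper's careful distinction between strongly skew-symmetric and merely skew-symmetric matrices is exactly what is needed: Pfaffians remain available for strongly skew-symmetric matrices in all characteristics, and the alternating identity \eqref{MAeq2d} automatically forces zero diagonals on the matrices $A_k$, so we stay within the valid framework throughout the argument.
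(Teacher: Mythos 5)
Your proposal is correct and follows essentially the same route as the paper: both establish that every non-trivial linear combination of the alternating forms (equivalently, of the strongly skew-symmetric structure-constant matrices $A_k$) is non-degenerate, deduce that $m$ is even, and then apply the Pfaffian together with the Chevalley--Warning theorem to a homogeneous polynomial of degree $m/2$ in $n$ variables to conclude $m \ge 2n$. Your dual-basis formulation of the non-degeneracy step is a slightly cleaner packaging of the paper's rank argument for the matrix $B=(A_1\Phi,\ldots,A_n\Phi)$, but the substance is identical.
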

\begin{proof}
Let $\bar{L}_1 \times \bar{L}_1 \rightarrow \bar{L}_2$ be the Lie product function. Let $\bar{x}_1, \bar{x}_2, \ldots, \bar{x}_m$ and  $\bar{y}_1, \bar{y}_2, \ldots, \bar{y}_n$ be the bases of $\bar{L}_1$ and $\bar{L}_2$ respectively. Then 
\begin{equation}\label{L0eq1}
[\bar{x}_i, \bar{x}_j] = {\alpha_{i,j,1}} \bar{y}_1+ \cdots  + {\alpha_{i,j,m}}\bar{y}_n
\end{equation}
for $1 \le i, j \le m$, where $\alpha_{i,j,k} \in \mathbb{F}_p$ for $1 \le k \le n$. Let $A_k$ be the matrix $(\alpha_{j,i,k})$. 
It follows from the defintion of Lie product  and \eqref{MAeq2d} that $A_k$ is a strongly skew-symmetric matrix. 

 Let $\bar{x} = {\phi_1}\bar{x}_1 + \phi_2 \bar{x}_2 + \cdots + \phi_m \bar{x}_m$ be an arbitrary non-trivial element of $\bar{L}_1$, where $\phi_i \in \mathbb{F}_p$ for $1 \le i \le m$. Notice that all of $\phi_i$ can not be zero as $\bar{x}$ is non-trivial element of $\bar{L}_1$.  Since Lie product is ${\mathbb F}_p$-bilinear,  we have
\begin{eqnarray*}
[\bar{x}, \bar{x}_i] &=& \phi_1[\bar{x}_1, \bar{x}_i] + \cdots + \phi_m [\bar{x}_m, \bar{x}_i]\\
&=& \phi_1(\alpha_{1,i,1}\bar{y}_1 + \cdots + \alpha_{1,i,m} \bar{y}_n) + \cdots + \phi_m(\alpha_{m,i,1} \bar{y}_1 + \cdots + \alpha_{m,i,m} \bar{y}_n)\\
&=& \beta_{i,1} \bar{y}_1 + \cdots + \beta_{i,n}\bar{y}_n
\end{eqnarray*}
where
\[\beta_{i,j} = \alpha_{1,i,j}\phi_1 + \cdots + \alpha_{m,i,j}\phi_m.\]
Let $B$ denote the matrix $(\beta_{i,j})$ for $1 \le i \le m$ and $1 \le j \le n$. Notice that $B$ is an $m \times n$ matrix and $B = (A_1\Phi, \ldots, A_n\Phi)$, where $\Phi$ is a column matrix whose transpose $\Phi^T$ is the row matrix $(\phi_1, \phi_2, \ldots, \phi_m)$.

For any non-trivial element $\bar{x} \in \bar{L}_1$, we have a ${\mathbb F}_p$-linear map $[\cdot, \bar{x}] : \bar{L}_1 \times \bar{L}_1 \rightarrow \bar{L}_2$ which maps any $\bar{u} \in \bar{L}_1$ to $[\bar{u}, \bar{x}]$. Now  \eqref{MAeq3b} tells that this map is surjective. Hence $m =\text{dim}(\bar{L}_1) >  \text{dim}(\bar{L}_2) = n$, because the kernel of $[\cdot, \bar{x}]$ is non-trivial. Also, since $[\cdot, \bar{x}]$ is surjective and $\{\bar{x}_1, \bar{x}_2, \ldots, \bar{x}_m\}$ is a basis for $\bar{L}_1$, it follows that the set of images $[\bar{x}_i, \bar{x}]$ of the elements $\bar{x}_i$ for  $1 \le i \le m$ contains a linearly independent subset of cardinality $n$. Thus it follows that $\text{rank}(B)$, the rank of the matrix $B$, is equal to $n$ (this happens because the image $[{\bar x}, \bar{x}_i]$ constitutes $i$th row of the matrix $B$). Hence the matrix $B$ has $n$ linearly independent columns. But there are only $n$ columns in $B$. Therefore all the columns $A_1\Phi, A_2\Phi, \ldots, A_n\Phi$ are linearly independent.   

As mentioned above, each $A_i$ is skew-symmetric. Therefore $A := \psi_1A_i + \psi_1A_2 + \cdots + \psi_nA_n$ is also skew-symmetric for any choice of $\psi_1, \psi_2, \ldots, \psi_n$ in $\mathbb{F}_p$. Suppose that not all of the $\psi$'s are zero. If the determinant of   $A$ is zero, then the determinant of $A\Phi = \psi_1A_1\Phi + \psi_2A_2\Phi + \cdots + \psi_nA_n\Phi$ is also zero. This implies that the determinant of $B$ is zero. But this is a contradiction to the fact that  $A_1\Phi, A_2\Phi, \ldots, A_n\Phi$ are linearly independent. Hence it follows that if  not all of the $\psi$'s are zero, then $A$ is non-singular. As the degree of the square matrix $A$ is $m$ and all skew-symmetric matrices of odd degree are singular, it follows that $m$ is even. Set $m = 2d$.

Now consider the equation
\[|A| = 0,\]
where $|A|$ denotes the determinant of the matrix $A$. The left hand side of the equation is a polynomial $f := f(\psi_1, \psi_2, \ldots, \psi_n)$ of total degree $m = 2d$ in $n$ unknowns $\psi_1, \psi_2, \ldots, \psi_n$ in $\mathbb{F}_p$. Since $A$ is skew-symmetric, there exists a Pfaffian $g := g(\psi_1, \psi_2, \ldots, \psi_n)$ of $A$. Thus $g^2 = f$, and  $f$ is homogeneous. So we get a homogeneous polynomial equation $f = 0$ of total degree $d$ in $n$ variables.  

We claim that $n \le d$. For, if $n > d$, then it  follows from Chevalley-Warning theorem \cite{cC35} that the number of solution of $f = 0$ in $\mathbb{F}_p$ is divisible by $p$.  Since $g$ is homogeneous, it has $\bf{0}$ as a solution, and therefore $g$ has a non-zero solution. This contradicts the fact that $A$ is non-singular. Hence our claim is true. This implies that $m = 2d \ge 2n$, which completes the proof of the lemma. \hfill $\Box$.

\end{proof}

Define ${\bar C}$ to be the ${\mathbb F}_p$-subspace 
\begin{equation}\label{MAeq6}
{\bar C} := \{{\bar y} \in {\bar L}_1 \mid [{\bar L}_2, {\bar y}] = 0\}
\end{equation}
in ${\bar L}_1$. 

\begin{lemma}\label{MAlemma1}
If ${\bar x} \in {\bar L}_1 - {\bar C}$ and ${\bar y} \in {\bar C} - \{0\}$, then $[{\bar x}, {\bar y}] \neq 0$ in the Lie algebra ${\bar L}$.
\end{lemma}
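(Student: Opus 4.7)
The plan is to argue by contradiction using the Jacobi identity, exploiting the surjectivity of the bracket map provided by hypothesis \eqref{MAeq3b}.

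Suppose, to the contrary, that $[\bar{x}, \bar{y}] = 0$. Since $\bar{y} \ne 0$, hypothesis \eqref{MAeq3b} applied to $\bar{y}$ gives that the ${\mathbb F}_p$-linear map $\bar{L}_1 \to \bar{L}_2$ defined by $\bar{u} \mapsto [\bar{u}, \bar{y}]$ is surjective. On the other hand, since $\bar{x} \notin \bar{C}$, by definition \eqref{MAeq6} there exists $\bar{z} \in \bar{L}_2$ with $[\bar{z}, \bar{x}] \ne 0$. By surjectivity I can write $\bar{z} = [\bar{u}, \bar{y}]$ for some $\bar{u} \in \bar{L}_1$, so that $[[\bar{u}, \bar{y}], \bar{x}] \ne 0$.

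Now I would apply the Jacobi identity in $\bar{L}$ to the triple $(\bar{u}, \bar{y}, \bar{x})$:
\[
[[\bar{u}, \bar{y}], \bar{x}] + [[\bar{y}, \bar{x}], \bar{u}] + [[\bar{x}, \bar{u}], \bar{y}] = 0.
\]
The middle term is $0$ by our contradiction hypothesis $[\bar{y}, \bar{x}] = -[\bar{x}, \bar{y}] = 0$. The last term vanishes because $[\bar{x}, \bar{u}] \in \bar{L}_2$ while $\bar{y} \in \bar{C}$, so by the defining property of $\bar{C}$ we have $[[\bar{x}, \bar{u}], \bar{y}] \in [\bar{L}_2, \bar{y}] = 0$. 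This forces $[[\bar{u}, \bar{y}], \bar{x}] = 0$, contradicting the choice of $\bar{u}$.

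There is no real obstacle here; the proof is a short Jacobi-identity computation. The only point that needs a moment of care is the sign convention (and the fact that \eqref{MAeq2d} plus bilinearity ensures $[\bar{y}, \bar{x}] = -[\bar{x}, \bar{y}]$, so $[\bar{x}, \bar{y}] = 0$ really does kill the middle Jacobi term in either characteristic, including $p = 2$). The essential structural content is that $\bar{C}$ is exactly the annihilator of $\bar{L}_2$ in $\bar{L}_1$, while $\bar{L}_2$ coincides with $[\bar{L}_1, \bar{y}]$ for any nonzero $\bar{y}$; combining these through the Jacobi identity is exactly what is needed.
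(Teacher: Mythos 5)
Your proof is correct and follows essentially the same route as the paper: both apply the Jacobi identity to a triple of the form $(\bar{u},\bar{y},\bar{x})$, kill one term using the defining property $[\bar{L}_2,\bar{y}]=0$ of $\bar{C}$, and use the surjectivity of $\bar{u}\mapsto[\bar{u},\bar{y}]$ from \eqref{MAeq3b} together with $\bar{x}\notin\bar{C}$ to produce a nonvanishing term. The only difference is organizational: the paper argues directly that $[[\bar{x},\bar{y}],\bar{z}]\neq 0$ for a suitable $\bar{z}$, while you phrase the identical computation as a contradiction.
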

\begin{proof}
Let ${\bar z}$ be an element of ${\bar L}_1$. Since the nilpotency class of our Lie algebra ${\bar L}$ is three,  the Jacobi identity gives
\[[[{\bar x}, {\bar y}], {\bar z}][[{\bar z}, {\bar x}], {\bar y}][[{\bar y}, {\bar z}], {\bar x}] =0.\]
Here $[{\bar z}, {\bar x}] \in [{\bar L}_1, {\bar L}_1] = {\bar L}_2$ and ${\bar y} \in {\bar C}$. Thus by \eqref{MAeq6} $[[{\bar z}, {\bar x}], {\bar y}] = 0$. Hence the above equation is equavalent to 
\[[[{\bar x}, {\bar y}], {\bar z}] = -[[{\bar y}, {\bar z}], {\bar x}].\]

Because ${\bar x} \in {\bar L}_1$ does not lie in ${\bar C}$, by the definition of ${\bar C}$ there exists an element ${\bar w} \in {\bar L}_2$ such that $[{\bar w}, {\bar x}] \neq 0$. Our hypothesis \eqref{MAeq3b} implies that $[{\bar y}, {\bar L}_1] = - [{\bar L}_1, {\bar y}] = {\bar L}_2$ for  the non-zero element ${\bar y}$ of ${\bar L}_1$. Hence we can choose ${\bar z} \in {\bar L}_1$ so that  $[{\bar y}, {\bar z}] = {\bar w}$. Then the above equation becomes
\[[[{\bar x}, {\bar y}], {\bar z}] = -[{\bar w}, {\bar x}] \neq 0.\]
Hence $[{\bar x}, {\bar y}] \neq 0$ and the proof is complete. \hfill $\Box$

\end{proof}

Notice that the ${\mathbb F}_p$-linear maps from ${\bar L}_2$ to ${\bar L}_3$ naturally form a vector space $\Hom({\bar L}_2, {\bar L}_3)$ over ${\mathbb F}_p$. It follows from \eqref{MAeq3a} and \eqref{MAeq4b} that this vector space has dimension
\begin{equation}\label{MAeq8a}
\text{dim}(\Hom({\bar L}_2, {\bar L}_3)) = \text{dim}({\bar L}_2) \cdot \text{dim}( {\bar L}_3) = n \cdot 1 = n.
\end{equation}

Lie multiplication in ${\bar L}$ is ${\mathbb F}_p$-bilinear from ${\bar L}_2 \times {\bar L}_1$ to ${\bar L}_3$. Hence it induces a ${\mathbb F}_p$-linear map $\lambda : {\bar L}_1 \rightarrow \Hom({\bar L}_2, {\bar L}_3)$, sending any ${\bar y}$ in ${\bar L}_1$ to a linear map $\lambda({\bar y}) \in \Hom({\bar L}_2, {\bar L}_3)$ defined by
\begin{equation}\label{MAeq8b}
\lambda({\bar y}) : {\bar x} \mapsto [{\bar x}, {\bar y}] \in {\bar L}_3
\end{equation}
for any ${\bar x} \in {\bar L}_2$. By the definition \eqref{MAeq6} of ${\bar C}$ it follows that ${\bar C}$ is precisely the kernel 
\begin{equation}\label{MAeq8c}
\text{ker}(\lambda) = {\bar C}
\end{equation}
of $\lambda$. Hence we have
\begin{equation}\label{MAeq8d}
m - \text{dim}({\bar C}) = \text{dim}({\bar L}_1/{\bar C}) \le \text{dim}(\Hom({\bar L}_2, {\bar L}_3)) = n,
\end{equation}
where equality holds if and only if $\lambda$ sends ${\bar L}_1$ onto $\Hom({\bar L}_2, {\bar L}_3)$.

\begin{lemma}\label{MAlemma2}
The subspace ${\bar C}$ is a non-zero proper subspace of the vector space ${\bar L}_1$.
\end{lemma}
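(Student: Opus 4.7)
The plan is to verify both halves of the statement separately, using the hypotheses \eqref{MAeq3a}, \eqref{MAeq3b}, together with the bound already established in Lemma \ref{MAlemma0} and the rank estimate \eqref{MAeq8d}.

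For properness of $\bar{C}$, I would argue by contradiction. If $\bar{C} = \bar{L}_1$, then the definition \eqref{MAeq6} says $[\bar{L}_2, \bar{y}] = 0$ for every $\bar{y} \in \bar{L}_1$, so $[\bar{L}_2, \bar{L}_1] = 0$. But \eqref{MAeq2c} with $i = 2$ gives $\bar{L}_3 = [\bar{L}_2, \bar{L}_1]$, and the hypothesis \eqref{MAeq3a} says $\dim(\bar{L}_3) = 1 \neq 0$, a contradiction. Hence $\bar{C}$ is a proper subspace of $\bar{L}_1$.

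For non-triviality, the key is to combine the rank estimate \eqref{MAeq8d} with the dimension bound from Lemma \ref{MAlemma0}. From \eqref{MAeq8d} we have $\dim(\bar{C}) \geq m - n$. Lemma \ref{MAlemma0} gives $m \geq 2n$, so $m - n \geq n$. Since $n > 0$ by \eqref{MAeq4c}, we conclude that $\dim(\bar{C}) \geq n \geq 1$, and in particular $\bar{C} \neq \{0\}$.

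No serious obstacle is expected here; the proof is just a short bookkeeping exercise once Lemma \ref{MAlemma0} is in hand. The only thing to watch is that the claim $\dim(\bar{C}) \geq m - n$ from \eqref{MAeq8d} relies on $\lambda$ being a linear map into an $n$-dimensional target, which is justified by \eqref{MAeq8a} (itself a consequence of $\dim(\bar{L}_3) = 1$ and $\dim(\bar{L}_2) = n$), so the chain of inequalities is internally consistent.
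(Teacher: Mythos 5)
Your proposal is correct and follows essentially the same route as the paper: properness via $[\bar{L}_2,\bar{L}_1]=\bar{L}_3\neq 0$, and non-triviality by combining the kernel estimate \eqref{MAeq8d} with $m\ge 2n$ from Lemma \ref{MAlemma0}. The only cosmetic difference is that you state the non-triviality step as a direct inequality $\dim(\bar{C})\ge m-n\ge n\ge 1$ rather than as a contradiction from $2n\le m\le n$, which is the same computation.
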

\begin{proof}
By the definition \eqref{MAeq6} of ${\bar C}$, it is a subspace of ${\bar L}_1$. If ${\bar C} = 0$, then $\text{dim}({\bar C}) = 0$. Thus by Lemma \ref{MAlemma0} and \eqref{MAeq8d} we get $2n \le m \le n$, which is impossible because $n > 0$ by \eqref{MAeq4c}. Hence ${\bar C}$ is a non-zero subspace of ${\bar L}_1$.

If ${\bar C} = {\bar L}_1$, then \eqref{MAeq2c} and \eqref{MAeq6} imply that 
\[{\bar L}_3 = [{\bar L}_2, {\bar L}_1] =  [{\bar L}_2, {\bar C}] = 0.\]
But this contradicts \eqref{MAeq3a}. Hence ${\bar C}$ is a proper subspace of ${\bar L}_1$, and the proof is complete. \hfill $\Box$

\end{proof}

Notice that any element ${\bar z} \in {\bar L}_1$ determines an ${\mathbb F}_p$-linear map $[\cdot, {\bar z}] : {\bar x} \mapsto [{\bar x}, {\bar z}]$ of ${\bar L}_1$ into  ${\bar L}_2$. The kernel of this map is the centralizer 
\begin{equation}\label{MAeq10a}
\C({\bar z}) = \C_{{\bar L}_1}({\bar z}) := \{{\bar x} \in {\bar L}_1 \mid  [{\bar x}, {\bar z}] = 0\}
\end{equation}
of  ${\bar z}$ in ${\bar L}_1$. If ${\bar z} \neq 0$, then \eqref{MAeq3b} implies that $[\cdot, {\bar z}]$ is an epimorphism of the vector space ${\bar L}_1$ onto  ${\bar L}_2$. Thus it follows that
\begin{equation}\label{MAeq10b}
\text{dim}(\C({\bar z})) = \text{dim}({\bar L}_1) -  \text{dim}({\bar L}_2) = m - n
\end{equation}
for all ${\bar z}$ in ${\bar L}_1 - \{0\}$. 

\begin{lemma}\label{MAlemma3}
If ${\bar z} \in {\bar C} - \{0\}$, then $\C({\bar z}) \subseteq {\bar C}$. If ${\bar z} \in {\bar L}_1 - {\bar C}$, then $\C({\bar z}) \cap  {\bar C} = \{0\}$.
\end{lemma}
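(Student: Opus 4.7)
Both assertions should follow immediately from Lemma \ref{MAlemma1} by contradiction, with essentially no additional input. The key observation is that Lemma \ref{MAlemma1} says precisely that whenever one argument of the Lie bracket lies in $\bar{L}_1 - \bar{C}$ and the other is a non-zero element of $\bar{C}$, the bracket is non-zero. The present lemma is just the contrapositive of this, repackaged in terms of the centralizer $\C(\bar{z})$.

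For the first statement, I would fix $\bar{z} \in \bar{C} - \{0\}$ and take an arbitrary $\bar{x} \in \C(\bar{z})$, so that $[\bar{x}, \bar{z}] = 0$. Suppose, for contradiction, that $\bar{x} \notin \bar{C}$. Then $\bar{x} \in \bar{L}_1 - \bar{C}$ and $\bar{z} \in \bar{C} - \{0\}$, and applying Lemma \ref{MAlemma1} (with $\bar{x}$ and $\bar{y} = \bar{z}$) yields $[\bar{x}, \bar{z}] \neq 0$, contradicting the choice of $\bar{x}$. Hence $\bar{x} \in \bar{C}$, proving $\C(\bar{z}) \subseteq \bar{C}$.

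For the second statement, I would fix $\bar{z} \in \bar{L}_1 - \bar{C}$ and pick $\bar{x} \in \C(\bar{z}) \cap \bar{C}$. If $\bar{x} \neq 0$, then $\bar{x} \in \bar{C} - \{0\}$ and $\bar{z} \in \bar{L}_1 - \bar{C}$; Lemma \ref{MAlemma1} applied with the roles $\bar{x} \leftrightarrow \bar{z}$, $\bar{y} \leftrightarrow \bar{x}$ gives $[\bar{z}, \bar{x}] \neq 0$, whence $[\bar{x}, \bar{z}] = -[\bar{z}, \bar{x}] \neq 0$ in characteristic $p$ (this minus sign is harmless even when $p = 2$, since the relevant bracket is simply non-zero). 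This contradicts $\bar{x} \in \C(\bar{z})$, so $\bar{x} = 0$, proving $\C(\bar{z}) \cap \bar{C} = \{0\}$.

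There is no substantial obstacle here; the only point to notice is that we are using Lemma \ref{MAlemma1} in both directions of the bracket (once with the non-$\bar{C}$ element first and once with it second), which is legitimate because skew-symmetry of the Lie bracket (or more precisely the identity $[x, x] = 0$ together with bilinearity) means that non-vanishing of $[\bar{z}, \bar{x}]$ is equivalent to non-vanishing of $[\bar{x}, \bar{z}]$ even when $p = 2$.
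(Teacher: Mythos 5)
Your proof is correct and is essentially identical to the paper's own argument: both parts are obtained as the contrapositive of Lemma \ref{MAlemma1}, with the same role assignments ($\bar y=\bar z$ in the first part, $\bar x=\bar z$ in the second) and the same use of $[\bar z,\bar y]=-[\bar y,\bar z]$ to swap the arguments. Nothing further is needed.
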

\begin{proof}
Suppose that ${\bar z}$ in ${\bar C} - \{0\}$, and ${\bar x} \in \C({\bar z}) - {\bar C}$. Then $[{\bar x}, {\bar z}] = 0$ by \eqref{MAeq10a}. But Lemma \ref{MAlemma1} for ${\bar y} = {\bar z}$ implies that $[{\bar x}, {\bar z}] \neq 0$. This contradiction proves the first statement in the lemma.

Now assume that ${\bar z}$ in ${\bar L}_1 - {\bar C}$, and that ${\bar y} \in \C({\bar z}) \cap {\bar C}$ is not $0$. Then by \eqref{MAeq10a} $[{\bar z}, {\bar y}] = - [{\bar y}, {\bar z}] = 0$. But again Lemma \ref{MAlemma1} for ${\bar x} = {\bar z}$ implies that $[{\bar z}, {\bar y}] \neq 0$. Thus we again get a contradiction, and therefore $\C({\bar z}) \cap  {\bar C} = \{0\}$. This completes the proof. \hfill $\Box$

\end{proof}

\begin{prop}\label{MAprop1}
If the Lie algebra ${\bar L}$ satisfies \eqref{MAeq3a} and \eqref{MAeq3b}, then

{\rm (a)} $\quad m = 2n.$

{\rm (b)} $\quad$ If   ${\bar z} \in {\bar L}_1 - \{0\}$, then the subspace  $\C({\bar z})$ of ${\bar L}_1$ has dimesion $n$.

{\rm (c)} $\quad$ The subspace ${\bar C}$ of ${\bar L}_1$ has dimension $n$, and is equal to $\C({\bar y})$ for any ${\bar y} \in {\bar C} - \{0\}$. Hence $\lambda$ is an epimorphism of ${\bar L}_1$ onto $\Hom({\bar L}_2, {\bar L}_3)$ with ${\bar C}$ as its kernel.

{\rm (d)} $\quad$ If ${\bar x} \in {\bar L}_1 - {\bar C}$, then ${\bar L}_1$ is the direct sum $\C({\bar x}) \oplus {\bar C}$ of its subspaces $\C({\bar x})$ and ${\bar C}$. Hence the map ${\bar z} \mapsto [{\bar x}, \bar{z}]$ is an isomorphism of $\bar{C}$ onto $\bar{L}_2$ as $\mathbb F_p$-spaces.
\end{prop}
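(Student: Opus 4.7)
The plan is to deduce all four parts from the preceding three lemmas essentially by a dimension count, using Lemma \ref{MAlemma2} to guarantee that both $\bar{C} - \{0\}$ and $\bar{L}_1 - \bar{C}$ are non-empty. The main pay-off will be the equality $m = 2n$ in part (a); once that is in hand, parts (b)--(d) are routine consequences of \eqref{MAeq10b} and Lemma \ref{MAlemma3}.

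First I would establish two-sided bounds on $\dim(\bar{C})$. Since $\bar{C}$ is proper in $\bar{L}_1$ (Lemma \ref{MAlemma2}), I can pick $\bar{x} \in \bar{L}_1 - \bar{C}$; by Lemma \ref{MAlemma3}, $\C(\bar{x}) \cap \bar{C} = \{0\}$, and by \eqref{MAeq10b}, $\dim(\C(\bar{x})) = m - n$. Hence
\[
(m-n) + \dim(\bar{C}) \;=\; \dim(\C(\bar{x})) + \dim(\bar{C}) \;\le\; \dim(\bar{L}_1) \;=\; m,
\]
giving $\dim(\bar{C}) \le n$. On the other hand, since $\bar{C}$ is non-zero (again Lemma \ref{MAlemma2}), I can pick $\bar{y} \in \bar{C} - \{0\}$; Lemma \ref{MAlemma3} then gives $\C(\bar{y}) \subseteq \bar{C}$, so $\dim(\bar{C}) \ge \dim(\C(\bar{y})) = m - n$. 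Combining the bounds, $m - n \le \dim(\bar{C}) \le n$, whence $m \le 2n$. Together with $m \ge 2n$ from Lemma \ref{MAlemma0}, this gives $m = 2n$, proving (a), and forces $\dim(\bar{C}) = n$.

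Part (b) is then immediate from \eqref{MAeq10b}: $\dim(\C(\bar{z})) = m - n = n$ for every $\bar{z} \in \bar{L}_1 - \{0\}$. For part (c), $\dim(\bar{C}) = n$ has already been shown; for any $\bar{y} \in \bar{C} - \{0\}$, the inclusion $\C(\bar{y}) \subseteq \bar{C}$ becomes an equality because both have dimension $n$. For the statement about $\lambda$, note that the inequality \eqref{MAeq8d} now reads $m - \dim(\bar{C}) = n = \dim(\Hom(\bar{L}_2, \bar{L}_3))$, which by the remark following \eqref{MAeq8d} means $\lambda$ is surjective; and its kernel is $\bar{C}$ by \eqref{MAeq8c}.

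Finally, for part (d), take $\bar{x} \in \bar{L}_1 - \bar{C}$. Lemma \ref{MAlemma3} gives $\C(\bar{x}) \cap \bar{C} = \{0\}$, and the dimensions add up to $n + n = 2n = m$, so $\bar{L}_1 = \C(\bar{x}) \oplus \bar{C}$. The linear map $\bar{z} \mapsto [\bar{x}, \bar{z}]$ from $\bar{C}$ into $\bar{L}_2$ has kernel $\{\bar{z} \in \bar{C} \mid [\bar{z}, \bar{x}] = 0\} = \C(\bar{x}) \cap \bar{C} = \{0\}$, so it is injective; since $\dim(\bar{C}) = n = \dim(\bar{L}_2)$, it is an isomorphism. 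I do not anticipate a substantive obstacle here: the whole proposition is a clean dimension-count wrap-up, with the only subtlety being the careful use of Lemma \ref{MAlemma2} to ensure both $\bar{C}-\{0\}$ and $\bar{L}_1-\bar{C}$ are non-empty so that Lemma \ref{MAlemma3} can be applied in both directions.
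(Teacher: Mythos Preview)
Your proof is correct and follows essentially the same approach as the paper: both arguments use Lemmas \ref{MAlemma0}, \ref{MAlemma2}, \ref{MAlemma3} and the dimension formula \eqref{MAeq10b} to squeeze $m$ between $2n$ and $2n$. The only cosmetic difference is that the paper packages the count as $2(m-n) = \dim(\C(\bar{x}) \oplus \C(\bar{y})) \le m$, whereas you sandwich $\dim(\bar{C})$ directly via $m-n \le \dim(\bar{C}) \le n$; these are equivalent reformulations of the same dimension count, and the remaining deductions of (b)--(d) match the paper's.
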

\begin{proof}
In view of Lemma \ref{MAlemma2} we can choose some elements $\bar{x} \in \bar{L}_1 -\bar{C}$ and $\bar{y} \in \bar{C} - \{0\}$. Then it follows from Lemma \ref{MAlemma3} that  the subspaces $\C(\bar{x})$ and $\C(\bar{y})$ satisfy $\C(\bar{x}) \cap \bar{C} = \{0\}$ and $\C(\bar{y}) \subseteq {\bar C}$. Hence $\C(\bar{x}) \cap \C(\bar{y}) = \{0\}$. It follows that the direct sum $\C(\bar{x}) \oplus \C(\bar{y})$ of these two subspaces is a subspace of $\bar{L}_1$.  In view of \eqref{MAeq4a} and \eqref{MAeq10b}, this implies that 
\[2(m-n) = \text{dim}(\C(\bar{x}) \oplus \C(\bar{y})) \le \text{dim}(\bar{L}_1) = m,\]
where equality holds if and only if  $\bar{L}_1  = \C(\bar{x}) \oplus \C(\bar{y})$. But $m-n \ge n$ by Lemma \ref{MAlemma0}. Therefore $2(m - n) \geq n + (m - n) = m$. So the above inequality is an equality, and $\bar{L}_1$ is the direct sum
\begin{equation}\label{leq1}
\bar{L}_1  = \C(\bar{x}) \oplus \C(\bar{y})
\end{equation}
of the subspaces $\C(\bar{x})$ and $\C(\bar{y})$. Part (a) of the proposition is nothing but  $2(m - n)  = m$.  Part (a) and \eqref{MAeq10b} imply part (b).

Because $\C(\bar{x}) \cap \bar{C} = \{0\}$, the sum $\C(\bar{x}) + \bar{C}$ of subspaces of $\bar{L}_1$ is direct. But $\bar{L}_1$ is already the direct sum \eqref{leq1}, in which the summand $\C(\bar{y})$ is contained in $\bar{C}$. This is possible only when $\C(\bar{y}) = \bar{C}$. Hence $\text{dim}(\bar{C}) = \text{dim}(\C(\bar{y})) = n$, and $\bar{C} = \C(\bar{y})$ for all $\bar{y} \in \bar{C} - \{0\}$. It follows that $m - \text{dim}(\bar{C}) = m - n = n$. Thus equality holds in \eqref{MAeq8d}. This implies that $\lambda$ sends $\bar{L}_1$ onto $\Hom(\bar{L}_2, \bar{L}_3)$. Since $\text{ker}(\lambda) = \bar{C}$ by \eqref{MAeq8c}, this completes the proof of part $(c)$ in the proposition.

The decomposition \eqref{leq1} and the equality $\bar{C} = \C(\bar{y})$ imply that $\bar{L}_1$ is the direct sum $\C(\bar{x}) \oplus \bar{C}$ for any $\bar{x} \in \bar{L}_1 - \bar{C}$. It follows from \eqref{MAeq3b} and \eqref{MAeq10a} that $\C(\bar{x})$ is the kernel of the epimorphism $\bar{z} \mapsto [\bar{x}, \bar{z}]$ of $\bar{L}_1$ onto $\bar{L}_2$. Hence this epimorphism restricts to an isomorphism of $\bar{C}$ onto $\bar{L}_2$. The part (d) holds, and the proposition is proved. \hfill $\Box$

\end{proof}

\section{Proof of Theorems C and  D}

Let us recall that for an arbitrary finite $p$-group $G$, for a prime $p$, $L$ denotes the graded Lie ring defined in \eqref{MAeq1a}, and $\bar{L}$ denotes its factor Lie algebra defined in \eqref{MAeq2b}.  For any integer $i \ge 1$, the additive group $L_i$ is just the multiplicative group $\gamma_i(G)/\gamma_{i+1}(G)$ written additively. Hence $\Phi(L_i) = pL_i$ is the subgroup 
\[\Phi( \gamma_i(G)/\gamma_{i+1}(G)) = \Phi( \gamma_i(G))\gamma_{i+1}(G)/\gamma_{i+1}(G) \le \gamma_i(G)/\gamma_{i+1}(G)\]
written additively. It follows that there is a natural epimorphism $\eta_i$ of the multiplicative group $\gamma_i(G)$ on to the additive group $\bar{L}_i = L_i/\Phi(L_i)$, sending any $x \in \gamma_i(G)$ to
\begin{equation}\label{Ceqn1a}
\eta_i(x) := (x\gamma_{i+1}(G))\Phi(L_i) \in L_i/\Phi(L_i) = \bar{L}_i.
\end{equation}
Furthermore, this epimorphism has the kernel
\begin{equation}\label{Ceqn1b}
\text{ker}(\eta_i) = \Phi( \gamma_i(G))\gamma_{i+1}(G).
\end{equation}
Since Lie multiplication in $\bar{L} = L/pL$ is induced by that in $L$, the definition \eqref{MAeq1c} of the latter multiplication implies that
\begin{equation}\label{Ceqn1c}
[\eta_i(x), \eta_j(y)] = \eta_{i+j}([x, y])
\end{equation}
in $\bar{L}$ for any integers $i,  j \ge 1$, and any elements $x \in \gamma_i(G)$ and $y \in \gamma_j(G)$. Here the commutator  $[x, y]$ is computed in the group $G$, and lies in $[\gamma_i(G), \gamma_j(G)] \le \gamma_{i+j}(G)$. So the right hand side of this equation is well defined.

The situation when $i =1$ is a bit special, since $\gamma_i(G)$ is $G$ itself, and $\gamma_2(G)$ is a subgroup of $\Phi(G) = \Phi(\gamma_1(G))$. It follows that the kenel $\Phi(\gamma_1(G))\gamma_2(G)$ of $\eta_1$ is just $\Phi(G)$. Hence we have

\begin{prop}\label{Cprop1}
The epimorphism $\eta_1$ in \eqref{Ceqn1a} sends the multiplicative group $G$ onto the additive group $\bar{L}_1$, and has $\Phi(G)$ as its kernel. So elements $x_1, x_2, \ldots, x_d \in G$ form a minimal generating set for the $p$-group $G$ if and only if their images $\eta_1(x_1), \eta_1(x_2), \ldots, \eta_1(x_d)$ form a basis of the vector space $\bar{L}_1$ over $\mathbb F_p$. Hence the number $d = d(G)$ of elements in any minimal generating set for $G$ is the dimension $\text{dim}(\bar{L}_1)$ of the $\mathbb F_p$-space $\bar{L}_1$.
\end{prop}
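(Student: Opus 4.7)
The plan is to simply unwind the definition of $\eta_1$ and then invoke the Burnside Basis Theorem. From \eqref{Ceqn1a} with $i=1$, the map $\eta_1$ is the composition of two surjections: first the abelianization $G \twoheadrightarrow G/\gamma_2(G) = L_1$ (rewritten additively), and then the canonical quotient $L_1 \twoheadrightarrow L_1/\Phi(L_1) = \bar{L}_1$. So $\eta_1$ is automatically an epimorphism of $G$ onto $\bar{L}_1$; nothing really needs to be proved there.

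Next I would identify the kernel. By \eqref{Ceqn1b} specialized at $i=1$, $\ker(\eta_1) = \Phi(\gamma_1(G))\gamma_2(G) = \Phi(G)\gamma_2(G)$. Since $G$ is a finite $p$-group, the Frattini subgroup satisfies $\Phi(G) = G^p\gamma_2(G) \supseteq \gamma_2(G)$, so $\ker(\eta_1) = \Phi(G)$. This gives a natural isomorphism of additive groups $G/\Phi(G) \cong \bar{L}_1$.

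Now for the second assertion, recall that $G/\Phi(G)$ is an elementary abelian $p$-group, hence naturally an $\mathbb F_p$-vector space, and the isomorphism above is actually one of $\mathbb F_p$-spaces (since both multiplication by $p$ on $L_1$ and the operation $x \mapsto x^p$ on $G/\gamma_2(G)$ feed into $\Phi(L_1)$ and $\Phi(G)$, respectively). By the Burnside Basis Theorem, elements $x_1,\dots,x_d \in G$ form a minimal generating set for $G$ if and only if their images $x_i\Phi(G)$ form an $\mathbb F_p$-basis of $G/\Phi(G)$. Under the isomorphism $G/\Phi(G) \cong \bar{L}_1$ these images correspond precisely to $\eta_1(x_1),\dots,\eta_1(x_d)$, so the two conditions are equivalent. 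Taking $d = d(G)$ then forces $d(G) = \dim_{\mathbb F_p}(\bar{L}_1)$, completing the proof.

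There is no genuine obstacle here: the statement is a routine identification of the degree-one graded piece modulo $p$ of the associated Lie ring with $G/\Phi(G)$, combined with Burnside's Basis Theorem. The only subtlety worth flagging in writing is the containment $\gamma_2(G) \le \Phi(G)$, which is what collapses $\Phi(G)\gamma_2(G)$ down to $\Phi(G)$ and makes the kernel come out cleanly.
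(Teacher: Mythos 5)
Your proposal is correct and follows essentially the same route as the paper: the paper's entire argument is the remark preceding the proposition that $\gamma_2(G)\le\Phi(G)$ collapses the kernel $\Phi(\gamma_1(G))\gamma_2(G)$ of $\eta_1$ to $\Phi(G)$, after which the identification $G/\Phi(G)\cong\bar{L}_1$ and the Burnside Basis Theorem give the remaining assertions, exactly as you wrote.
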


Since $L_1$ is $G/\gamma_2(G)$ written additively, and since $L_1$ generates the Lie ring $L$, an automorphism $\alpha \in \Aut(G)$ lies in $\Aut^{\gamma_2(G)}(G)$, the group of automorphisms  of $G$ which induce the identity on $G/\gamma_2(G)$,  if and only if it induces the identity automorphism on $L$. In particular, any such $\alpha$ induces the identity automorphism on the section $\gamma_i(G)/\gamma_{i+1}(G)$, which is $L_i$ written multiplicatively, for any integer $i \ge 1$.

Suppose that $\{x_1, x_2, \ldots, x_d\}$ is a minimal generating set for $G$, and that $\alpha \in \Aut(G)$. Then there are unique elements $y_1, y_2, \ldots, y_d \in G$ such that $\alpha(x_i) = x_iy_i$ for $1 \le i \le d$. Evidently these elements $y_i$ determine $\alpha$ completely. Furthermore, $\alpha$ lies in the subgroup $\Aut^{\gamma_2(G)}(G)$ of $\Aut(G)$ if and only if $y_i \in \gamma_2(G)$ for $1 \le i \le d$. It follows that $|\Aut^{\gamma_2(G)}(G)| \le |\gamma_2(G)|^d$, with equality if and only if there is some automorphism of the groups $G$ sending $x_i$ to $x_i y_i$ for any choice of  the elements $y_i$ in $\gamma_2(G)$ for $1 \le i \le d$. Hence we have

\begin{prop}\label{Cprop2}
If $p$ is any prime, then the following properties are equivalent for any finite $p$-group $G$:
\begin{subequations}
\begin{align}
& |\Aut^{\gamma_2(G)}(G)| = |\gamma_2(G)|^d.\\
& \text{If  $\{x_1, x_2, \ldots, x_d\}$ is any minimal generating set for $G$, and if  $y_1, y_2, \ldots, y_d$ be any $d$}\\
&  \text{elements in $\gamma_2(G)$, then there exists an automorphism $\alpha$ of $G$ sending $x_i$ to $x_i y_i$ for}\nonumber\\
&  \text{all $1 \le i \le d$.}\nonumber\\
& \text{There is some minimal generating set $\{x_1, x_2, \ldots, x_d\}$ for $G$ such that, for any}\\ 
& \text{$y_1, y_2, \ldots, y_d \in \gamma_2(G)$, there is some automorphism $\alpha$ of $G$ sending $x_i$ to $x_i y_i$ for} \nonumber\\
& \text{all $1 \le i \le d$.}\nonumber
\end{align}
\end{subequations}
\end{prop}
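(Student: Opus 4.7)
The plan is to exploit the injection from $\Aut^{\gamma_2(G)}(G)$ into $\gamma_2(G)^d$ already described in the paragraph preceding the proposition, and to observe that an injection between finite sets of the same size is automatically a bijection. I will prove the chain of implications (b) $\Rightarrow$ (c) $\Rightarrow$ (a) $\Rightarrow$ (b).

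The implication (b) $\Rightarrow$ (c) is immediate: (b) asserts the lifting property for every minimal generating set, while (c) asserts it only for some particular one, so (c) is a weakening of (b). For (c) $\Rightarrow$ (a), I will fix a minimal generating set $\{x_1,\ldots,x_d\}$ witnessing (c) and use the map $\Psi\colon \Aut^{\gamma_2(G)}(G)\to \gamma_2(G)^d$ defined by $\alpha\mapsto (y_1,\ldots,y_d)$ where $\alpha(x_i)=x_iy_i$. Because $\{x_1,\ldots,x_d\}$ generates $G$, the tuple $(y_1,\ldots,y_d)$ determines $\alpha$ uniquely, so $\Psi$ is injective; this already gives the bound $|\Aut^{\gamma_2(G)}(G)|\le|\gamma_2(G)|^d$. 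The hypothesis (c) says $\Psi$ is surjective, so combining the two I obtain $|\Aut^{\gamma_2(G)}(G)|=|\gamma_2(G)|^d$, which is (a).

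For the remaining implication (a) $\Rightarrow$ (b), let $\{x_1,\ldots,x_d\}$ be an arbitrary minimal generating set for $G$ (noting that $d=\dim(\bar L_1)$ is independent of the choice of the set by Proposition \ref{Cprop1}), and let $\Psi$ be the analogous injection associated with this set. Under hypothesis (a), the finite sets $\Aut^{\gamma_2(G)}(G)$ and $\gamma_2(G)^d$ have the same cardinality, so the injection $\Psi$ must be surjective. Consequently, for any $y_1,\ldots,y_d\in\gamma_2(G)$, there exists $\alpha\in \Aut^{\gamma_2(G)}(G)\subseteq \Aut(G)$ with $\alpha(x_i)=x_iy_i$ for $1\le i\le d$. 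Since the generating set was arbitrary, this establishes (b) and closes the cycle of implications.

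There is no real obstacle here; the proposition is essentially a packaging of the pigeonhole observation already articulated just before its statement, together with the remark that (a) is intrinsic to $G$ and thus forces the lifting property to be independent of the choice of minimal generating set.
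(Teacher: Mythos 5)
Your proposal is correct and follows essentially the same route as the paper, which establishes the proposition via the injection $\alpha \mapsto (y_1,\ldots,y_d)$ of $\Aut^{\gamma_2(G)}(G)$ into $\gamma_2(G)^d$ described in the paragraph preceding the statement, together with the observation that equality of cardinalities forces surjectivity for every minimal generating set. Your explicit cycle (b) $\Rightarrow$ (c) $\Rightarrow$ (a) $\Rightarrow$ (b) is just a slightly more formal packaging of that same pigeonhole argument.
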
 

For the rest of this section we assume that the finite $p$-group satisfies
\begin{subequations}
\begin{align}
|\gamma_3(G)| & =  p,\label{Ceqn4a}\\
[G, y] & = \gamma_2(G) \quad \text{for all} \quad y \in G-\Phi(G), \;\; \text{and}\label{Ceqn4b}\\
|\Aut^{\gamma_2(G)}(G)| &= |\gamma_2(G)|^{d(G)}.\label{Ceqn4c}
\end{align}
\end{subequations}

As a consequence $G$ also satisfies

\begin{lemma}\label{Clemma1}
The kernel $\Phi(\gamma_3(G))\gamma_4(G)$ of the epimorphism $\eta_3: \gamma_3(G) \onto \bar{L}_3$ is $1$. Hence $G$ has class $3$, and $\eta_3$ is an isomorphism of the multiplicative group $\gamma_3(G)$ onto the additive group $\bar{L}_3$. So $\text{dim}(\bar{L}_3) = 1$.
\end{lemma}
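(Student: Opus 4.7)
The plan is to read off everything from the single hypothesis \eqref{Ceqn4a}; the other two assumptions are not actually needed for this particular lemma.

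First I would observe that $|\gamma_3(G)|=p$ forces $\Phi(\gamma_3(G))=\gamma_3(G)^p[\gamma_3(G),\gamma_3(G)]=1$, since a group of order $p$ is cyclic of prime order and hence has trivial Frattini subgroup. So the kernel $\Phi(\gamma_3(G))\gamma_4(G)$ collapses to $\gamma_4(G)$.

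Next I would argue that $\gamma_4(G)=1$. The standard trick is that $\gamma_3(G)$ is a normal subgroup of order $p$ in the finite $p$-group $G$, and any such subgroup is central: the conjugation action gives a homomorphism $G\to\Aut(\gamma_3(G))\cong\mathbb F_p^{\times}$, whose image is a $p$-subgroup of an order $p-1$ group, hence trivial. Therefore $\gamma_3(G)\le\Z(G)$, so $\gamma_4(G)=[\gamma_3(G),G]=1$. (Alternatively: $\gamma_4(G)\le\gamma_3(G)$ has order $1$ or $p$; if $\gamma_4(G)=\gamma_3(G)$ then lower-central descent stabilises at this nontrivial subgroup, contradicting nilpotency of $G$.)

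Combining these two facts, the kernel $\Phi(\gamma_3(G))\gamma_4(G)$ equals $1$, so the epimorphism $\eta_3\colon\gamma_3(G)\onto\bar L_3$ from \eqref{Ceqn1a} is an isomorphism. Since $\gamma_3(G)\neq 1$ while $\gamma_4(G)=1$, the group $G$ has nilpotency class exactly $3$. Finally, $\dim_{\mathbb F_p}(\bar L_3)=\log_p|\gamma_3(G)|=1$. There is no real obstacle here — the entire content of the lemma follows from the order-$p$ hypothesis on $\gamma_3(G)$ together with the elementary fact that normal subgroups of prime order in a $p$-group are central.
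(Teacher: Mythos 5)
Your proof is correct and follows essentially the same route as the paper: the order-$p$ hypothesis gives $\Phi(\gamma_3(G))=1$ and $\gamma_3(G)\le\Z(G)$ (hence $\gamma_4(G)=1$), so the kernel of $\eta_3$ is trivial and the remaining claims follow. Your observation that only \eqref{Ceqn4a} is used is accurate; the paper's proof likewise invokes only that hypothesis.
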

\begin{proof} 
Since  the subgroup $\gamma_3(G)$ is of order $p$ (by \eqref{Ceqn4a}), it follows that $\gamma_3(G) \le \Z(G)$ and $\Phi(\gamma_3(G)) = 1$. Hence $G$ is of class $3$ and by \eqref{Ceqn1b} $\eta_3$ is an isomorphism. \hfill $\Box$

\end{proof}

The last statement in the preceding lemma tells us that $\bar{L}$ satisfies \eqref{MAeq3a}. Since $\eta_2$ is an epimorphism of $\gamma_2(G)$ onto $\bar{L}_2$, it follows from \eqref{Ceqn1c} and \eqref{Ceqn4b} that \eqref{MAeq3b} holds. Hence all the hypotheses of Proposition \ref{MAprop1} hold. So from now on we adopt the notation $m, n$, $\bar{C}$, $\C(\bar{z})$, $\lambda$ used in  Proposition \ref{MAprop1}, and apply freely its conclusions (a) through (d).

Following Macdonald, we denote by $C$ the centralizer
\begin{equation}\label{Ceqn6}
C = \C_G(\gamma_2(G))
\end{equation}
of $\gamma_2(G)$ in $G$. It has another description.

\begin{lemma}\label{Clemma2}
The subgroup $C$ of $G$ is the inverse image of the subspace $\bar{C}$ of $\bar{L}_1$ under the epimorphism $\eta_1$ of $G = \gamma_1(G)$ onto $\bar{L}_1$. Hence $C$ contains the kernel $\Phi(G)$ of the epimorphism.
\end{lemma}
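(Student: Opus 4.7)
The plan is to unpack the definitions of $C$ and $\bar{C}$ and show that membership in each is detected by the same commutator condition, transferred across $\eta_1$ and $\eta_2$ via the identity \eqref{Ceqn1c}.

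First I would take an arbitrary $y \in G$ and compute when $\eta_1(y) \in \bar{C}$. By the definition \eqref{MAeq6}, this holds if and only if $[\bar{L}_2, \eta_1(y)] = 0$. Since $\eta_2$ is an epimorphism of $\gamma_2(G)$ onto $\bar{L}_2$, the subspace $\bar{L}_2$ is spanned by elements of the form $\eta_2(x)$ with $x \in \gamma_2(G)$, so the condition becomes $[\eta_2(x), \eta_1(y)] = 0$ for every $x \in \gamma_2(G)$. Applying \eqref{Ceqn1c} with $i = 2$, $j = 1$ rewrites this as $\eta_3([x, y]) = 0$ for every $x \in \gamma_2(G)$.

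Next I would invoke Lemma \ref{Clemma1}, which says that $\eta_3$ is an \emph{isomorphism} of $\gamma_3(G)$ onto $\bar{L}_3$. Since $[x, y] \in [\gamma_2(G), G] = \gamma_3(G)$, the vanishing $\eta_3([x, y]) = 0$ is equivalent to $[x, y] = 1$ in $G$. Therefore $\eta_1(y) \in \bar{C}$ if and only if $[x, y] = 1$ for all $x \in \gamma_2(G)$, i.e., if and only if $y \in \C_G(\gamma_2(G)) = C$. This proves $C = \eta_1^{-1}(\bar{C})$.

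Finally, for the last assertion, I would observe that $0 \in \bar{C}$ trivially, so $\eta_1^{-1}(\bar{C})$ contains $\eta_1^{-1}(0) = \ker(\eta_1)$; by Proposition \ref{Cprop1}, $\ker(\eta_1) = \Phi(G)$, so $\Phi(G) \le C$, completing the proof. There is no real obstacle here; the entire argument is a direct translation between the multiplicative description of $C$ and the additive description of $\bar{C}$, made possible by the commutator-compatibility \eqref{Ceqn1c} and the injectivity of $\eta_3$ guaranteed by $|\gamma_3(G)| = p$.
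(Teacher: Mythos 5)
Your proposal is correct and follows essentially the same route as the paper: both arguments use the identity \eqref{Ceqn1c} together with the surjectivity of $\eta_2$ and the injectivity of $\eta_3$ from Lemma \ref{Clemma1} to translate the condition $[x,y]=1$ for all $x \in \gamma_2(G)$ into $[\bar{L}_2, \eta_1(y)] = 0$, and then identify the kernel of $\eta_1$ with $\Phi(G)$ via Proposition \ref{Cprop1}. No gaps.
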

\begin{proof}
If $x \in \gamma_2(G)$ and $y \in G$, then \eqref{Ceqn1c} tells us that $\eta_3$ sends $[x, y] \in \gamma_3(G)$ to $[\eta_2(x), \eta_1(y)] \in \bar{L}_3$. Since $\eta_3$ is an isomorphism of $\gamma_3(G)$ onto  $\bar{L}_3$, it follows that $[x, y] =1$ in $G$ if and only if $[\eta_2(x), \eta_1(y)] = 0$  in $\bar{L}_3$.  Since $\eta_2$ is an epimorphism of $\gamma_2(G)$ onto $\bar{L}_2$, we conclude that $y \in G$ satisfies $[x, y] = 1$ for all $x \in \gamma_2(G)$ if and only if its image $\eta_1(y) \in \bar{L}_1$ satisfies $[\bar{x}, \eta_1(y)] = 0$ in $\bar{L}$ for all $\bar{x} \in \bar{L}_2$. In view of \eqref{MAeq6} and \eqref{Ceqn6} this proves the first statement of the lemma. Since $\text{ker}(\eta_1) = \Phi(G)$ by Proposition \ref{Cprop1}, the remaining part of the statement also holds true. \hfill $\Box$

\end{proof}

Now we are ready to prove the key result.

\begin{thm}\label{Cthm1}
If a finite $p$-group, for any prime $p$, satisfies \eqref{Ceqn4a}-\eqref{Ceqn4c}, then $d(G) = 2$.
\end{thm}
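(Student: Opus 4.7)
The plan is to use Proposition \ref{MAprop1}, which under the running hypotheses already gives $m = 2n$, and then rule out $n \ge 2$ by exhibiting automorphisms guaranteed by \eqref{Ceqn4c} whose existence forces an element outside $C$ to actually centralize $\gamma_2(G)$. Since $d(G) = \dim(\bar L_1) = m$ by Proposition \ref{Cprop1}, this will yield $d(G) = 2n = 2$.

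Suppose for contradiction that $n \ge 2$. By Lemma \ref{MAlemma2} we may pick $\bar x \in \bar L_1 - \bar C$, and by Proposition \ref{MAprop1}(d) we have the decomposition $\bar L_1 = \C(\bar x) \oplus \bar C$ with $\dim \C(\bar x) = \dim \bar C = n$. Since $\bar x \in \C(\bar x)$ and $\dim \C(\bar x) \ge 2$, we can extend $\bar x$ to a basis $\{\bar x, \bar z_1, \dots, \bar z_{n-1}\}$ of $\C(\bar x)$, and we fix any basis $\{\bar y_1, \dots, \bar y_n\}$ of $\bar C$. Their union is a basis of $\bar L_1$, so by Proposition \ref{Cprop1} any lifts give a minimal generating set $S = \{x, z_1, \dots, z_{n-1}, y_1, \dots, y_n\}$ of $G$. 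The key property of $z_1$ is $\bar z_1 \in \C(\bar x)$, i.e.\ $[\bar x, \bar z_1] = 0$ in $\bar L_2$; via the isomorphism $\eta_3$ and the epimorphism $\eta_2$ this translates to $[x, z_1] \in \gamma_3(G) \le \Z(G)$.

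Now invoke the hypothesis \eqref{Ceqn4c} in the form given by Proposition \ref{Cprop2}(c): for every $w \in \gamma_2(G)$ there exists $\alpha \in \Aut^{\gamma_2(G)}(G)$ with $\alpha(z_1) = z_1 w$ and $\alpha$ fixing the remaining elements of $S$. I will evaluate $\alpha([x, z_1])$ in two ways. First, since $\alpha$ induces the identity on $G/\gamma_2(G)$, and since $G$ has class $3$ with $\gamma_2(G)$ abelian and $\gamma_3(G) \le \Z(G)$, a short direct commutator calculation shows that any such $\alpha$ fixes $\gamma_3(G)$ pointwise; in particular $\alpha([x,z_1]) = [x, z_1]$. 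Second, using $\alpha(x) = x$, $\alpha(z_1) = z_1 w$, the expansion $[x, z_1 w] = [x, w][x, z_1]^{w}$, together with $[x, z_1] \in \Z(G)$ and $w \in \gamma_2(G)$ abelian, yields
\[
\alpha([x, z_1]) = [x, z_1 w] = [x, w][x, z_1].
\]
Comparing the two expressions gives $[x, w] = 1$ for every $w \in \gamma_2(G)$, i.e.\ $x \in C = \C_G(\gamma_2(G))$. But $\bar x \notin \bar C$, and Lemma \ref{Clemma2} says $C$ is exactly the preimage of $\bar C$, so $x \notin C$: contradiction. Hence $n = 1$, completing the proof.

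The only delicate step is the claim that every $\alpha \in \Aut^{\gamma_2(G)}(G)$ fixes $\gamma_3(G)$ pointwise. This needs a careful commutator expansion: writing $\alpha(g) = g r_g$ with $r_g \in \gamma_2(G)$ for each $g \in G$, one first observes that for $u \in \gamma_2(G)$ the element $r_u$ actually lies in $\gamma_3(G)$ (because $\alpha$ induces the identity on $\gamma_2(G)/\gamma_3(G)$, which follows from $[\gamma_2(G), \gamma_2(G)] = 1$ and $[\gamma_2(G), G] = \gamma_3(G)$). Then for a generating commutator $[u, g]$ of $\gamma_3(G)$ one checks $[u r_u, g r_g] = [u, g]$ using $r_u \in \gamma_3(G) = \Z(G)$, $[u,g] \in \Z(G)$, and $[\gamma_3(G), G] = 1$. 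Once this is in hand, the rest of the argument is immediate.
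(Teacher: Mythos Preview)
Your overall strategy is sound and genuinely different from the paper's, but there is a slip in the translation of $[\bar x,\bar z_1]=0$ back to $G$. From \eqref{Ceqn1c} one gets $\eta_2([x,z_1])=[\bar x,\bar z_1]=0$, and by \eqref{Ceqn1b} this means
\[
[x,z_1]\in \ker(\eta_2)=\Phi(\gamma_2(G))\gamma_3(G),
\]
\emph{not} $[x,z_1]\in\gamma_3(G)$ as you write. Nothing in the hypotheses forces $\Phi(\gamma_2(G))\le\gamma_3(G)$, so your ``in particular $\alpha([x,z_1])=[x,z_1]$'' does not follow from $\alpha$ fixing $\gamma_3(G)$ alone. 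The repair is short: since $\alpha\in\Aut^{\gamma_2(G)}(G)$ induces the identity on $\gamma_2(G)/\gamma_3(G)$, for each $u\in\gamma_2(G)$ we have $\alpha(u)=ur_u$ with $r_u\in\gamma_3(G)$; as $\gamma_2(G)$ is abelian and $|\gamma_3(G)|=p$, $\alpha(u^p)=u^pr_u^p=u^p$, so $\alpha$ fixes $\Phi(\gamma_2(G))=\gamma_2(G)^p$ pointwise as well. Hence $\alpha$ fixes all of $\Phi(\gamma_2(G))\gamma_3(G)$, and in particular $[x,z_1]$, which is what you need. (Your separate appeal to $[x,z_1]\in\Z(G)$ for the identity $[x,z_1]^w=[x,z_1]$ is unnecessary anyway: $w$ and $[x,z_1]$ both lie in the abelian group $\gamma_2(G)$.)

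With that correction your argument goes through and is rather cleaner than the paper's. The paper builds a carefully tailored basis $\bar x_1,\dots,\bar x_n,\bar y_1,\dots,\bar y_n$ using Proposition~\ref{MAprop1}(c,d), applies a \emph{single} automorphism $\alpha$ moving only $y_1\mapsto y_1[x_1,y_1]$, and then uses the extra generator $y_2$ (available since $n\ge 2$) to show $\alpha$ fixes all of $\gamma_2(G)$, finally deriving $[x_1,[x_1,y_1]]=1$ against an explicit nonvanishing in $\bar L_3$. You instead exploit the decomposition $\bar L_1=\C(\bar x)\oplus\bar C$ only to locate an extra element $\bar z_1\in\C(\bar x)$ (this is where $n\ge 2$ enters for you), and then let $w$ range over \emph{all} of $\gamma_2(G)$: since each such $\alpha$ fixes $[x,z_1]\in\ker(\eta_2)$, you read off $[x,w]=1$ directly and force $x\in C$. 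Your route avoids both the delicate basis bookkeeping and the ``$\alpha$ fixes all of $\gamma_2(G)$'' step; the paper's route, on the other hand, never needs to analyse $\Phi(\gamma_2(G))$.
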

\begin{proof}
The number $d(G)$ is equal to the $\text{dim}(\bar{L}_1)$ by Proposition \ref{Cprop1}. This, \eqref{MAeq4a},  \eqref{MAeq4b} and Proposition \ref{MAprop1}(a) imply that $d(G) = m = 2n$. So we only need to show that  $n = 1$. Since the integer $n > 0$ by Lemma \ref{MAlemma0}, it suffices to derive a conradiction from the assumption that $n \ge 2$.

The subspace $\bar{C}$ of $\bar{L}_1$ has dimension $n$ by Proposition \ref{MAprop1}(c). Since $\bar{L}_1$ has dimension $m = 2n$, we can choose elements $\bar{x}_1, \bar{x}_2, \ldots, \bar{x}_n \in \bar{L}_1 - \bar{C}$ forming a basis of $\bar{L}_1$ modulo $\bar{C}$, i.e., whose images form a basis for the factor group $\bar{L}_1/\bar{C}$. Proposition \ref{MAprop1}(c) implies that $\lambda(\bar{x}_1), \lambda(\bar{x}_2), \ldots, \lambda(\bar{x}_n)$ form a $\mathbb F_p$-basis for $\Hom(\bar{L}_2, \bar{L}_3)$.  Let $\bar{w}$ be a basis element of the vector space $\bar{L}_3$ of dimension $1$. Then there is a unique basis $\bar{z}_1, \bar{z}_2, \ldots, \bar{z}_n$ for $\bar{L}_2$ such that $\lambda(\bar{x}_i)$ sends $\bar{z}_j$ to $\delta_{ij}\bar{w}$ for $1 \le i, j \le n$, where $\delta_{ij}$ is the Kronecker delta symbol equal to $1$ when $i = j$ and $0$ otherwise. In view of the definition \eqref{MAeq8b} of $\lambda$, this says that 
\begin{equation}\label{Cleq1}
[\bar{z}_j, \bar{x}_i] = \delta_{ij}\bar{w} \in \bar{L}_3
\end{equation}
for any $1 \le i, j \le n$. Since $\bar{x}_1$ lies in $\bar{L}_1 - \bar{C}$, by Proposition \ref{MAprop1}(d) it follows that the map $\bar{y} \mapsto [\bar{x}_1, {\bar y}]$ is an isomorphism of the vector space  $\bar{C}$ onto $\bar{L}_2$.  Hence there is a unique basis    $\bar{y}_1, \bar{y}_2, \ldots, \bar{y}_n$ for $\bar{C}$ such that 
\begin{equation}\label{Cleq2}
[\bar{x}_1, \bar{y}_j] = \bar{z}_j
\end{equation}
for all $ 1 \le j \le n$.

By definition $\bar{x}_1, \bar{x}_2, \ldots, \bar{x}_n$ form a basis for $\bar{L}_1/\bar{C}$, while $\bar{y}_1, \bar{y}_2, \ldots, \bar{y}_n$ form a baisis for  the subspace $\bar{C}$. Hence  $\bar{x}_1, \bar{x}_2, \ldots, \bar{x}_n, \bar{y}_1, \bar{y}_2, \ldots, \bar{y}_n$ form a basis for $\bar{L}_1$.  In view of Proposition \ref{Cprop1}  there is some minimal family of generators $x_1,  x_2, \ldots, x_n, y_1, y_2, \ldots y_n$ for the $p$-group $G$ such that $\eta_1(x_i) = \bar{x}_i$ and  $\eta_1(y_j) = \bar{y}_j$ for all $1 \le i, j \le n$. Since $[x_1, y_1]$ lies in $\gamma_2(G)$, Proposition \ref{Cprop2} and our assumption \eqref{Ceqn4c} imply that there is some automorphism $\alpha \in \Aut^{\gamma_2(G)}(G)$ sending these generators to 
\begin{equation}\label{Cleq3}
\alpha(x_i) = x_i, \quad \alpha(y_1) = y_1[x_1, y_1], \quad \alpha(y_j) = y_j
\end{equation}
for $1 \le i \le n$ and $2 \le j \le n$.

We claim that the automorphism $\alpha$ fixes  every element of $\gamma_2(G)$. Our assumption that $n \ge 2$ implies that the element $y_2$ exists and is fixed by $\alpha$. Since $\alpha$ also fixes $x_1,  x_2, \ldots, x_n$, it fixes each commutator  $[x_i, y_2] \in \gamma_2(G)$ for $1 \le i \le n$.  We know from \eqref{Ceqn1c} that 
\[\eta_2([x_i, y_2]) = [\eta_1(x_i), \eta_1(y_2)] = [\bar{x}_i, \bar{y}_2]\]
for all $1 \le i \le n$. Because $\bar{y}_2$ lies in $\bar{C}-\{0\}$, its centralizer $\C(\bar{y}_2)$ is $\bar{C}$ by Proposition \ref{MAprop1}(c). So the basis $\bar{x}_1, \bar{x}_2, \ldots, \bar{x}_n$  for $\bar{L}_1$ modulo $\bar{C}$ is also a basis for $\bar{L}_1$ modulo $\C(\bar{y}_2)$. The epimorphism $\bar{x} \mapsto [\bar{x}, \bar{y}_2]$ of $\bar{L}_1$ onto $\bar{L}_2$ has kernel $\C(\bar{y}_2)$. Hence it sends the $\bar{x}_i$, for $1 \le i \le n$, to a basis of $\bar{L}_2$ consisting of all $[\bar{x}_i, \bar{y}_2] = \eta_2([x_i, y_2])$.  We conclude that  $[x_1, y_2], [x_2, y_2], \ldots, [x_n, y_2]$ generate $\gamma_2(G)$ modulo $\gamma_3(G)$. But $\alpha$, like any automorphism in $\Aut^{\gamma_2(G)}(G)$, fixes each element of $\gamma_3(G) = \gamma_3(G)/\gamma_4(G)$. Since it also fixes  each $[x_i, y_2]$ for $1 \le i \le n$, it fixes every elements of $\gamma_2(G)$, and our claim is proved.

Now we consider the action of $\alpha$ on the commutator $[x_1, y_1]$. Since the commutator lies in $\gamma_2(G)$, it must be fixd by $\alpha$. In view of \eqref{Cleq3} this gives
\begin{eqnarray*}
[x_1, y_1] & =& \alpha([x_1, y_1]) = [\alpha(x_1), \alpha(y_1)] =    [x_1, y_1[x_1, y_1]]\\
&=& [x_1, [x_1, y_1]][x_1, y_1]^{[x_1, y_1]} =  [x_1, [x_1, y_1]][x_1, y_1]
\end{eqnarray*}
in $G$. Hence the element $[x_1, [x_1, y_1]] =1$ in $\gamma_3(G)$. Applying $\eta_3$, and using \eqref{Ceqn1c} twice, we obtain
\[[\bar{x}_1, [\bar{x}_1, \bar{y}_1]] = [\eta_1(x_1), [\eta_1(x_1), \eta_1(y_1)]] = \eta_3([x_1, [x_1, y_1]]) = 0\]
in $\bar{L}$.  But
\[ [\bar{x}_1, [\bar{x}_1, \bar{y}_1]] = [\bar{x}_1, \bar{z}_1] = -\bar{w} \neq 0\]
by \eqref{Cleq2} and \eqref{Cleq1}. This contradiction shows that our assumption $n \ge 2$ must be false. Hence $n =1$, which completes the proof of the Theorem. \hfill $\Box$

\end{proof}

Now we are ready to prove Theorems C and D.
\vspace{.1in}

\noindent{\it Proof of Theorem C.}
Let $G$ be a Camina-type finite $p$-group of nilpotency class at least $3$.  Then there exists a maximal subgroup $N$ of $\gamma_3(G)$ which is normal in $G$. 
Set $\bar{G} = G/N$. Notice that $\bar{G}$ is Camina-type (by Lemma \ref{lemma2}), the nilpotency class of $\bar{G}$ is $3$ and $|\gamma_3(\bar{G})| = p$. Now it follows from Proposition \ref{Cprop1} and the preceding discussion for $\bar{G}$ that $d(\bar{G}) = \text{dim}(\bar{L}_1)$ and $d(\gamma_2(\bar{G})/\gamma_3(\bar{G})) = \text{dim}(\bar{L}_2)$.  This, \eqref{MAeq4a},  \eqref{MAeq4b} and Proposition \ref{MAprop1}(a) imply that $d(\bar{G}) = 2d(\gamma_2(\bar{G})/\gamma_3(\bar{G})$.  Since $d(\bar{G}) = d(G)$ and $d(\gamma_2(\bar{G})/\gamma_3(\bar{G})) = d(\gamma_2(G)/\gamma_3(G))$, we have $d(G) = 2d(\gamma_2(G)/\gamma_3(G))$. This proves assertion (i). Assertions (ii) and (iii) follow from Theorem \ref{thm} and Corollary \ref{2gcor1} respectively. \hfill $\Box$

\vspace{.2in}

\noindent{\it Proof of Theorem D.}
Let $G$ be a finite $p$-group of nilpotency class at least $3$ satisfying Hypothesis A. Let $N$ be a maximal subgroup  of $\gamma_3(G)$ which is normal in $G$. Set $\bar{G} = G/N$.
Then $\bar{G}$ satisfies Hypothesis A (by Lemma \ref{lemma1}), the nilpotency class of $\bar{G}$ is $3$ and $|\gamma_3(\bar{G})| = p$. Notice that Hypothesis A is stronger than \eqref{Ceqn4c}, so that any group satisfying Hypothesis A, also satisfies \eqref{Ceqn4c}. So all the hypotheses of Theorem \ref{Cthm1} hold true for $\bar{G}$. Hence $d(\bar{G}) = 2$, and hence $d(G) = 2$. This completes the proof of assertion (i) of the theorem. Assertion (ii) follows from assertion (i),  Lemma \ref{2glemma4}, Lemma \ref{2glemma5}, Proposition \ref{2gprop1} and Theorem \ref{2gthm1}.  Assertion (iii) follows from Theorem \ref{NYthm}.      \hfill $\Box$

\section{Central quotient and commutator subgroup}

Understanding the relationship between $\gamma_2(G)$ and $G/\Z(G)$ goes back, at least, to 1904 when I. Schur \cite{iS04} proved that the finiteness of $G/\Z(G)$ implies the finiteness of $\gamma_2(G)$. A natural question which arises here is about the converse of Schur's theorem, i.e., whether the finiteness of $\gamma_2(G)$ implies the finiteness of $G/\Z(G)$. By a well known result of  P. Hall \cite{pH56} it follows that if  $\gamma_2(G)$ is finite, then $G/\Z_2(G)$ is finite. However, unfortunately, the answer to the converse of Schur's theorem in general is negative as it can be seen for infinite extraspecial $p$-group for an odd prime $p$,  results are available with some extra conditions. We mention here one of these results, which is generalised below.   B. H. Neumann 
\cite[Corollary 5.41]{bN51} proved that  if $G$ is finitely generated and $\gamma_2(G)$ is finite, then $G/\Z(G)$ is finite.  Moreover,  if $G$ is generated by $k$ elements, then $|G/\Z(G)| \le |\gamma_2(G)|^k$. 

We notice that neither of the conditions, i.e., $G$ is finitely generated and $\gamma_2(G)$ finite, is necessary to show that $G/\Z(G)$ is finite. Let $G/\Z(G)$ be finitely generated by a minimal generating set $\{x_1\Z(G), \ldots, x_d\Z(G)\}$ having $d = d(G)$ elements such that $|x_i^{G}|$ is finite for $1 \le i \le d$. Since any class-preserving automorphism of $G$ fixes the center of $G$ elementwise and maps the non-central generating elements to their conjugates, it follows that $|\Aut_c(G)| \le \Pi_{i=1}^d |x_i^G|$. Thus $|G/\Z(G)| = |\Inn(G)| \le |\Aut_c(G)| \le \Pi_{i=1}^d |x_i^G|$ is finite, which implies that $\gamma_2(G)$ is finite. Hence it follows that 
\begin{equation}\label{meq}
|G/\Z(G)| \le |\gamma_2(G)|^d,
\end{equation}
 since $|g^G| = |[g, G]| \le |\gamma_2(G)|$ for all $g \in G$. We have proved

\begin{prop}
Let $G$ be an arbitrary group such that $G/\Z(G)$ is finitely generated by a minimal generating set $\{x_1\Z(G), \ldots, x_d\Z(G)\}$ with $|x_i^G| < \infty$ for $1 \le i \le d$. Then $G/\Z(G)$ is finite. Moreover, $\gamma_2(G)$ is finite and $|G/\Z(G)| \le |\gamma_2(G)|^d$.
\end{prop}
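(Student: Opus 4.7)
The plan is to recycle the short argument already sketched in the paragraph preceding the statement, but present it in the order: bound $|\Aut_c(G)|$, deduce that $|G/\Z(G)|$ is finite, invoke Schur's theorem to conclude that $\gamma_2(G)$ is finite, and finally replace each $|x_i^G|$ by $|\gamma_2(G)|$ to obtain the asserted inequality. Nothing deeper than counting conjugates and the standard bijection $\Inn(G) \cong G/\Z(G)$ is needed.

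First I would note the elementary but crucial observation that every $\alpha \in \Aut_c(G)$ fixes $\Z(G)$ pointwise, since central elements have singleton conjugacy classes. Because $\{x_1\Z(G), \ldots, x_d\Z(G)\}$ generates $G/\Z(G)$, the set $\{x_1, \ldots, x_d\} \cup \Z(G)$ generates $G$; hence any class-preserving automorphism $\alpha$ is completely determined by the tuple $(\alpha(x_1), \ldots, \alpha(x_d))$, and each $\alpha(x_i)$ lies in $x_i^G$. This gives
\[
|\Aut_c(G)| \le \prod_{i=1}^d |x_i^G|,
\]
which is finite by hypothesis. Since $\Inn(G) \le \Aut_c(G)$ and $\Inn(G) \cong G/\Z(G)$, it follows that $|G/\Z(G)| \le |\Aut_c(G)| < \infty$.

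Next I would invoke Schur's theorem: since $G/\Z(G)$ is now known to be finite, so is $\gamma_2(G)$. With $\gamma_2(G)$ finite, the inequality $|x_i^G| = |[x_i, G]| \le |\gamma_2(G)|$ (which is exactly the bound used to pass from \eqref{bineq} to \eqref{lineq} in the introduction) combines with the display above to give
\[
|G/\Z(G)| \le |\Aut_c(G)| \le \prod_{i=1}^{d} |x_i^G| \le |\gamma_2(G)|^d,
\]
as required.

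There is no real obstacle here; the only small subtlety is confirming that a class-preserving automorphism is indeed determined by its values on lifts of a generating set of $G/\Z(G)$, and that is immediate from the pointwise fixing of $\Z(G)$. Once that is in place, the rest is bookkeeping plus a citation of Schur's 1904 theorem.
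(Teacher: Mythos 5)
Your proposal is correct and follows essentially the same route as the paper, which in fact presents this exact argument in the paragraph immediately preceding the proposition: bound $|\Aut_c(G)|$ by $\prod_{i=1}^{d}|x_i^G|$ using that class-preserving automorphisms fix $\Z(G)$ pointwise, deduce finiteness of $G/\Z(G)$ from $|G/\Z(G)|=|\Inn(G)|\le|\Aut_c(G)|$, invoke Schur's theorem for the finiteness of $\gamma_2(G)$, and finish with $|x_i^G|=|[x_i,G]|\le|\gamma_2(G)|$. No gaps.
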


A natural question which arises here is: 
\vspace{.1in}

\noindent {\bf Question.} What are all the groups $G$ for which equality holds in \eqref{meq}?  

\vspace{.1in}

We are going to consider only nilpotent groups $G$ for which equality holds in \eqref{meq}. In this case, there exists a finite nilpotent group $H$ such that  $\Z(H) \le \gamma_2(H)$. Then it follows that $d(H) = d(H/\Z(H)) = d(G/\Z(G)$. So we work with finite nilpotent groups $G$ such that $\Z(G) \le \gamma_2(G)$, and provide an answer to the above question upto isoclinism.  Let $G$ be a finite nilpotent group minimally generated by $d$ elements for which equality holds  in \eqref{meq}. Then it follows that
\[|G/\Z(G)| = |\Inn(G)| \le |\Aut_c(G)| \le  |\gamma_2(G)|^d = G/\Z(G).\]
 Hence $|\Aut_c(G)| = |\gamma_2(G)|^d$, which shows that $G$ satisfies Hypothesis A. Notice that $\Aut_c(G) =  \Inn(G)$. Since $G$ satisfies Hypothesis A, it is a Camina-type group. Thus it follows from Lemma \ref{prelemma2} that $G$ is a $p$-group for some prime integer $p$.  So the problem now  reduces to classifying finite $p$-groups $G$ for which equality holds  in \eqref{meq}. Obvious examples of such groups are finite extraspecial $p$-groups. Other examples are  groups defined in \eqref{Intgrp1} and $2$-generator groups isoclinic to these groups.

For finite $p$-groups of nilpotency class $2$, we get

\begin{thm}\label{s11thm1}
Let $G$ be a finite $p$-group  of nilpotency class $2$ such that  equality holds in  \eqref{meq}.  Then $G$ is isoclinic to the group $Y$ defined in \eqref{ygroup}.
\end{thm}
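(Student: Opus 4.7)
The plan is to reduce the statement directly to Theorems \ref{thmcl2a} and \ref{thmcl2b}. The only real content is to show that equality in \eqref{meq} forces simultaneously Hypothesis A and the equality $\Aut_c(G) = \Inn(G)$; once this is done, the cyclicity of $\gamma_2(G)$ and the isoclinism structure are already in hand from earlier sections.

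First I would pass to a stem group $H$ in the isoclinism family of $G$. Since $H/\Z(H) \cong G/\Z(G)$ and $\gamma_2(H) \cong \gamma_2(G)$, the numerical equality of \eqref{meq} transfers from $G$ to $H$. Because $H$ is a stem group, $\Z(H) \le \gamma_2(H) \le \Phi(H)$, and therefore $d(H) = d(H/\Z(H))$. Thus equality in \eqref{meq} for $H$ reads $|H/\Z(H)| = |\gamma_2(H)|^{d(H)}$.

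Second, I invoke the chain
\[
|\gamma_2(H)|^{d(H)} \;=\; |H/\Z(H)| \;=\; |\Inn(H)| \;\le\; |\Aut_c(H)| \;\le\; |\gamma_2(H)|^{d(H)},
\]
where the final inequality is \eqref{lineq}. This chain is saturated, so $H$ satisfies Hypothesis A and $\Aut_c(H) = \Inn(H)$. Now Theorem \ref{thmcl2a} applies to the class-$2$ group $H$ and yields that $\gamma_2(H)$ is cyclic.

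Finally, since $H$ is a class-$2$ finite $p$-group satisfying Hypothesis A and with cyclic commutator subgroup, Theorem \ref{thmcl2b} concludes that $H$ is isoclinic to the central product $Y$ of \eqref{ygroup} for a suitable positive integer $m$. Isoclinism being an equivalence relation, the original group $G$ is isoclinic to $Y$ as well. There is no substantive obstacle here: the argument is essentially a bookkeeping reduction, and the only mild care needed is in the first step, where passing to a stem group is what guarantees $d(H) = d(H/\Z(H))$ and hence that the hypothesis forces \emph{Hypothesis A} in its precise form (rather than merely a weaker generator-dependent bound).
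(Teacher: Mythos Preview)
Your proof is correct and follows essentially the same route as the paper: the discussion preceding Theorem~\ref{s11thm1} already establishes (under the standing reduction to $\Z(G)\le\gamma_2(G)$, i.e., to a stem group) the chain $|G/\Z(G)|=|\Inn(G)|\le|\Aut_c(G)|\le|\gamma_2(G)|^{d}$, whence Hypothesis~A holds and $\Aut_c(G)=\Inn(G)$; the paper's proof then invokes Theorems~\ref{thmcl2a} and~\ref{thmcl2b} exactly as you do. Your version simply makes the passage to a stem group explicit rather than leaving it in the surrounding discussion.
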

\begin{proof}
Suppose that $G$ is a group as in the statement.   Since $\Aut_c(G) =  \Inn(G)$ (as noticed above), it follows from Theorem \ref{thmcl2a} that $\gamma_2(G)$ is cyclic. Since $G$ satisfies Hypothesis A, by Theorem \ref{thmcl2b}  $G$ is isoclinic to the group $Y$ defined in \eqref{ygroup}.  \hfill $\Box$

\end{proof}

For finite $p$-groups of nilpotency class larger than $2$, we get

\begin{thm}\label{s11thm2}
Let $G$ be a finite $p$-group  of nilpotency class at least $3$ for which equality holds  in  \eqref{meq}. Then $d(G) = 2$. If  $|\gamma_2(G)/\gamma_3(G)| > 2$, then equality holds for $G$ in  \eqref{meq}  if and only if $G$ is a $2$-generator group with cyclic commutator subgroup.  Furthermore, $G$ is isomorphic to some group defined in \eqref{2ggrp} and  is isoclinic to the group $K$ defined in \eqref{Intgrp1} for suitable parameters. If $|\gamma_2(G)/\gamma_3(G)| = 2$, then equality holds for $G$ in  \eqref{meq}  if and only if $G$ is a $2$-generator $2$-group of nilpotency class $3$ with elementary abelian $\gamma_2(G)$ of order $4$.
\end{thm}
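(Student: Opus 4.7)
The plan is to mimic the strategy of Theorem \ref{s11thm1}, but appealing to Theorem D (and its ingredients) in place of Theorem B. The basic observation is that equality in \eqref{meq} is equivalent to the simultaneous validity of Hypothesis A and $\Aut_c(G) = \Inn(G)$: from the chain
\[ |G/\Z(G)| = |\Inn(G)| \le |\Aut_c(G)| \le |\gamma_2(G)|^{d(G)}, \]
equality between the extremes forces equality throughout, and conversely the two conditions together give back the equality. In particular, Theorem D(i) immediately yields $d(G) = 2$ under the hypothesis of the theorem.

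For the case $|\gamma_2(G)/\gamma_3(G)| > 2$, the forward direction is exactly Theorem D(ii): equality in \eqref{meq} gives Hypothesis A, which forces $G$ to be a $2$-generator group with cyclic commutator subgroup, isomorphic to some group in \eqref{2ggrp} and isoclinic to the group $K$ of \eqref{Intgrp1}. For the converse, suppose $G$ is a $2$-generator $p$-group with cyclic $\gamma_2(G)$, nilpotency class at least $3$, and $|\gamma_2(G)/\gamma_3(G)| > 2$. Then Lemma \ref{2glemma5} delivers Hypothesis A. Writing $\gamma_2(G) = \langle u \rangle$, the condition $|\gamma_2(G)/\gamma_3(G)| > 2$ translates to $[u, G] \subseteq \langle u^4 \rangle$, so Lemma \ref{yC82}(ii) applies: every automorphism of $G$ inducing the identity on $G/\gamma_2(G)$ is inner, hence $\Aut_c(G) \le \Inn(G)$, and equality follows. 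Combining Hypothesis A with $\Aut_c(G) = \Inn(G)$ gives equality in \eqref{meq}.

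For the case $|\gamma_2(G)/\gamma_3(G)| = 2$, first observe that $p = 2$ automatically, so $G$ is a $2$-group. Both directions then follow from Proposition \ref{NYprop1} together with the equivalence noted above. If equality holds in \eqref{meq}, then $G$ is a $2$-generator $2$-group satisfying both $\Aut_c(G) = \Inn(G)$ and Hypothesis A; Proposition \ref{NYprop1} pins down the nilpotency class to $3$ and forces $\gamma_2(G)$ to be elementary abelian of order $4$. Conversely, the same proposition (whose content rests on Lemma \ref{NYlemma2}) shows that every such group enjoys both Hypothesis A and $\Aut_c(G) = \Inn(G)$, whence equality in \eqref{meq}. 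The one slightly delicate point of the plan lies precisely here: Theorem D(iii) alone would permit $|\gamma_2(G)| \in \{4, 8\}$, and it is the extra requirement $\Aut_c(G) = \Inn(G)$ (forced by equality in \eqref{meq} but not by Hypothesis A alone) that rules out order $8$ via Lemma \ref{NYlemma2}.
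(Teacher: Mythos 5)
Your proposal is correct and follows essentially the same route as the paper: reduce equality in \eqref{meq} to the conjunction of Hypothesis A and $\Aut_c(G)=\Inn(G)$ (exactly the reduction carried out in the discussion preceding the theorem in Section 11), then invoke Theorem D for everything except the final assertion and Proposition \ref{NYprop1} (resting on Lemma \ref{NYlemma2}) to cut $|\gamma_2(G)|$ down from $8$ to $4$ in the $|\gamma_2(G)/\gamma_3(G)|=2$ case. You merely make explicit what the paper leaves implicit, e.g.\ deducing $\Aut_c(G)=\Inn(G)$ in the converse of the $|\gamma_2(G)/\gamma_3(G)|>2$ case from Lemma \ref{yC82}(ii), which is the same mechanism the paper uses in the discussion before Lemma \ref{2glemma5}.
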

\begin{proof}
 Since  any group $G$, as in the statement, satisfies Hypothesis A, the proof of all assertions, except the last one, follows from Theorem D stated in the introduction.  The last assertion follows from Proposition \ref{NYprop1}. \hfill $\Box$

\end{proof}

\vspace{.2in}

\noindent{\bf Acknowledgements.} I thank Prof. Everett Dade for his timely help and valuable suggestions. Results of Sections 9 and 10 are suggested by him.  I thank Prof. Mike Newman for  his valuable suggestions on the presentation of the paper, for his help in fixing the condition $|\gamma_2(G)/\gamma_3(G)| > 2$ for $2$-groups and for his help in writting a Magma code for computing class-preserving automorphism group of finite groups.  Section 8 is written jointly with him. Thanks are also due to my student Mr. Pradeep K. Rai for reading an earlier version of the paper and pointing out some corrections. This work was completed during  my visit to the mathematics department of the Australian National University, Canberra, where I was visiting under Indo-Australia early career visiting fellowship of Department of Science and Technology, Govt. of India, implemented by Indian National Science Academy.

\end{document}